\documentclass[12pt,twosided]{article}
%\documentclass[12pt,reqno]{amsart}
%%%%%%%%%%%%%%%%%%%%%%%%%%%%%%%%%%%%%%%%%%%%%%%%%%%%%%%%%%%%%%%%%%%%%%%%%%%%%%%%%%%%%%%%%%%%%%%%%%%%%%%%%%%%%%%%%%%%%%%%%%%%
\usepackage{graphicx}
\usepackage{amsmath}
\usepackage{amssymb}
\usepackage{amscd}
\usepackage{ifthen}
\usepackage{tikz}\usetikzlibrary{matrix,arrows,decorations.pathmorphing}
\usepackage[colorlinks, linkcolor=blue, citecolor=magenta, urlcolor=cyan]{hyperref}
\usepackage{mathabx}

%\addtolength{\textheight}{1cm}\addtolength{\textwidth}{1cm}
\textheight22cm \textwidth15.5cm \hoffset-1.1cm \topmargin-0.7cm

\def\rank{\mathop{\rm rank}\nolimits}

\newtheorem{theorem}{Theorem}[section]
\newtheorem{proposition}[theorem]{Proposition}
\newtheorem{corollary}[theorem]{Corollary}
\newtheorem{conjecture}[theorem]{Conjecture}
\newtheorem{remark}[theorem]{Remark}

\newtheorem{lem}[theorem]{Lemma}
\newtheorem{examp}[theorem]{Example}
\newtheorem{prop}[theorem]{Proposition}

\newcommand*{\QEDB}{\hfill\ensuremath{\Box}}%

\newenvironment{proof}[1][Proof]{\textbf{#1.} }{\QEDB \vspace{5pt}}

\newcommand\bond[1]{\draw (#1) -- +(1,0)}
\newcommand\tcirc[3]{
	\ifthenelse{\equal{#1}{w}}{\filldraw[fill=white,draw=black] (#2) circle (0.08);}{}%
	\ifthenelse{\equal{#1}{b}}{\filldraw[black] (#2) circle (0.08);}{}%
	\draw (#2) node[above=2pt] {#3};
	}
\newcommand\tcross[2]{
	\draw (#1) node[above=2pt] {#2};
	\draw (#1) ++(-0.12,-0.12)-- +(0.24, 0.24);
	\draw (#1) ++(-0.12, 0.12)-- +(0.24,-0.24);
	}
\newcommand\tstar[2]{
	\draw[color=red] (#1) node {\Large$*$};
	\draw (#1) node[above=2pt] {#2};
	}
\newcommand\tsquare[2]{
		\draw[semithick,color=blue] (#1) ++(-0.15,-0.15) rectangle ++(0.3,0.3);
		\tcross{#1}{#2};
		}
\newcommand\DDnode[3]{
\ifthenelse{\equal{#1}{w}}{\tcirc{w}{#2}{#3}}{}		% white - non-compact root (Satake diagram)
\ifthenelse{\equal{#1}{b}}{\tcirc{b}{#2}{#3}}{}		% black - compact root (Satake diagram)
\ifthenelse{\equal{#1}{x}}{\tcross{#2}{#3}}{}		% crossed root (corresponding to parabolic)
\ifthenelse{\equal{#1}{s}}{\tstar{#2}{#3}}{}		% starred root (my notation for sub-parabolic)
\ifthenelse{\equal{#1}{q}}{\tsquare{#2}{#3}}{}		% crossed square (Iw root)
}

\newcommand{\comm}[1]{}

\newcommand\C{{\mathbb C}}
\newcommand\PP{{\mathbb P}}
\newcommand\N{{\mathbb N}}

\newcommand\g{\mathfrak{g}}

\newcommand\h{\mathfrak{h}}

\newcommand\op[1]{\mathop{\rm #1}\nolimits}

\newcommand\p{\partial}

\newcommand\R{{\mathbb R}}

\newcommand\Z{{\mathbb Z}}

\newcommand\hol{\mathfrak{hol}}
\def\Re{\mathop{\rm Re}\nolimits}
\def\Im{\mathop{\rm Im}\nolimits}
\def\SU{\mathop{\rm SU}\nolimits}

\def\PSU{\mathop{\rm PSU}\nolimits}

\setcounter{footnote}{1}

\makeatletter
\def\blfootnote{\xdef\@thefnmark{}\@footnotetext}
\makeatother

\begin{document}

\title{Blow-ups and infinitesimal automorphisms \\
 % symmetry 
of CR-manifolds}\blfootnote{{\bf Mathematics Subject Classification:} 32V40, 32C05; 32M12, 53C15.}
 \blfootnote{{\bf Keywords:} real hypersurface in complex space, CR-automorphism, holomorphic vector field, submaximal symmetry dimension, parabolic subalgebra, gap phenomenon.}
\author{Boris Kruglikov\footnote{Department of Mathematics and Statistics, UiT the Arctic University of Norway,
Troms\o{} 90-37, Norway. Email: {\tt boris.kruglikov@uit.no}.}}
 \date{}
 \maketitle

 \begin{abstract} \footnotesize %\scriptsize
For a real-analytic connected CR-hypersurface $M$ of CR-dimension $n\ge 1$ having a point 
of Levi-nondegeneracy
the following alternative is demonstrated for its symmetry algebra $\mathfrak{s}=\mathfrak{s}(M)$:
(i) either $\dim\mathfrak{s}=n^2+4n+3$ and $M$ is spherical everywhere; \linebreak
(ii) or $\dim\mathfrak{s}\le n^2+2n+2+\delta_{2,n}$ and in the case of equality $M$ is spherical
and has fixed signature of the Levi form in the complement to its Levi-degeneracy locus.
A version of this result is proved for the Lie group of global automorphisms of $M$.

Explicit examples of CR-hypersurfaces and their infinitesimal and global automorphisms realizing
the bound in (ii) are constructed. We provide many other models with large symmetry using the technique of blow-up,
in particular we realize all maximal parabolic subalgebras of the pseudo-unitary algebras as a symmetry.
 \end{abstract}

\section{Introduction}\label{introduction}

\subsection{Formulation of the problem}

Investigation of symmetry is a classical problem in geometry. For a class $\mathcal{C}$ of manifolds
endowed with particular geometric structures, denote by ${\mathfrak s}(M)$ the Lie algebra 
of vector fields on $M$ preserving the structure (infinitesimal automorphisms). 
It is important to determine the maximal value $D_{\hbox{\tiny\rm max}}$ of the 
{\it symmetry dimension} $\dim{\mathfrak s}(M)$ over all $M\in\mathcal{C}$.
 % as well as other large symmetry dimensions.

Often the values immediately below $D_{\hbox{\tiny\rm max}}$ are not realizabile as 
$\dim{\mathfrak s}(M)$ for any $M\in{\mathcal C}$, which is known as the {\it gap phenomenon}. 
One then searches for the next realizable value, the {\it submaximal dimension $D_{\hbox{\tiny\rm smax}}$}, thus obtaining the interval $(D_{\hbox{\tiny\rm smax}}, D_{\hbox{\tiny\rm max}})$ called the first gap 
(or lacuna) for the symmetry dimension. 
 % The lacunary behavior of $\dim{\mathfrak s}(M)$ may extend further and, ideally, one would like
 % to determine all possible large values of $\dim{\mathfrak s}(M)$.

The first and next gaps were successfully identified in Riemannian geometry, both in the global and infinitesimal settings \cite{KN,Ko}, see also \cite{Eg,I2}. A large number of other situations where
the gap phenomenon has been extensively studied falls in the framework of parabolic geometry 
\cite{CS}, see the results and historical discussion in \cite{KT}.

This article concerns symmetry in CR-geometry. While there was a considerable progress 
for Levi-nondegenerate CR-manifolds, in which case the geometry is parabolic, the problem of bounding  
symmetry dimension in general has been wide open. 
 % The purpose of this paper is to advance in this direction.

\subsection{The status of knowledge}

Recall that an {\it almost CR-structure}\, on a smooth manifold $M$ is a subbundle $H(M)\subset T(M)$ of the tangent bundle, called the CR-distribution, endowed with a field of operators %of complex structure
$J_x:H_x(M)\rightarrow H_x(M)$, $J_x^2= -\hbox{id}$, smoothly depending on $x\in M$.
 % A manifold equipped with an almost CR-structure is called an {\it almost CR-manifold}.
CR-dimension of  $M$ is $\hbox{CRdim}\,M=\frac12\rank H(M)$,
CR-codimension of $M$ is $\dim M-\rank H(M)$. The complexified CR-distribution splits
$H(M)\otimes\C=H^{(1,0)}(M)\oplus H^{(0,1)}(M)$, where
 $$
H_x^{(1,0)}(M)=\{X-iJ_xX\mid X\in H_x(M)\},\ \
H_x^{(0,1)}(M)=\{X+iJ_xX\mid X\in H_x(M)\}.
 $$
The almost CR-structure on $M$ is said to be integrable if the distribution $H^{(1,0)}(M)$ is involutive.
 % i.e.\ for any pair of local sections ${\mathfrak z},{\mathfrak z}'$ of $H^{(1,0)}(M)$ the commutator 
 % $[{\mathfrak z},{\mathfrak z}']$ is also a local section of $H^{(1,0)}(M)$. 
An integrable almost CR-structure is called a {\it CR-structure}\, and a manifold equipped with a CR-structure a {\it CR-manifold}. In this paper we consider only {\it CR-hypersurfaces}, i.e.\ 
CR-manifolds of CR-codimension 1.

A real hypersurface $M$ in a complex manifold $({\mathcal M},{\mathcal J})$ has an induced CR-structure: $H_x(M)=T_x(M)\cap {\mathcal J}_x\,T_x(M)$ and $J_x={\mathcal J}|_{H_x(M)}$ for $x\in M$. Conversely, every analytic CR-hypersurface is locally realizable as such real hypersurface of
CR-dimension $\dim_{\C}{\mathcal M}-1$. In smooth situation a realization is not always possible, but in this
article we restrict to real analytic CR-structures and hence make no distinction between abstract and embedded CR-hypersurfaces.

The {\it Levi form}\, of a CR-hypersurface $M$ at $x$ is given by the formula
${\mathcal L}_M(x)(\zeta,\overline{\zeta'})=i[{\mathfrak z},\overline{{\mathfrak z}'}](x)\,\hbox{mod}\, H_x(M)\otimes\C$, $\zeta,\zeta'\in H_x^{(1,0)}(M)$, where ${\mathfrak z}$, ${\mathfrak z}'$ are 
arbitrary local sections of $H^{(1,0)}(M)$ near $x$ such that 
${\mathfrak z}(x)=\zeta$, ${\mathfrak z}'(x)=\zeta'$. 
By identifying $T_x(M)/H_x(M)$ with $\R$, this is a Hermitian form on the CR-distribution
defined up to a real scalar multiple.

As shown in classical works \cite{C,CM}, \cite{Ta1,Ta2}, see also \cite{BS,CS},
the dimension of the symmetry algebra $\mathfrak{s}(M)$ of a {\it Levi-nondegenerate}\, connected CR-hypersurface $M$ of CR-dimension $n$ does not exceed $n^2+4n+3$. If $\dim\mathfrak{s}(M)$ attains this bound then $M$ is {\it spherical}, i.e.\ locally CR-equivalent to an open subset of the hyperquadric
 \begin{equation}\label{QK}
{\mathcal Q}_k=\Bigl\{(z,w)\in\C^n\times\C:  \Im w=\sum_{j=1}^k|z_j|^2-\sum_{j=k+1}^{n}|z_j|^2\Bigr\}
 \end{equation}
for some $0\le k\le n/2$. The Levi form of ${\mathcal Q}_k$ has signature $(k,n-k)$ everywhere and $\dim\mathfrak{s}({\mathcal Q}_k)=n^2+4n+3$ for all $k$. Thus, for the class of Levi-nondegenerate connected CR-hypersurfaces of CR-dimension $n$ one has $D_{\hbox{\tiny\rm max}}=n^2+4n+3$. Further, $D_{\hbox{\tiny\rm smax}}=n^2+3$ in the strongly pseudoconvex (Levi-definite) case for $n>1$ and $D_{\hbox{\tiny\rm smax}}=n^2+4$ in the Levi-indefinite case \cite{K2}. The situation $n=1$ is exceptional with $D_{\hbox{\tiny\rm smax}}=3$ \cite{C,KT}.

In the absence of Levi-nondegeneracy, finding the maximal and submaximal dimensions of the symmetry algebra is much harder. As is customary, assume the CR-manifold $M$ and the vector fields forming 
the symmetry algebra to be real-analytic. Then $\mathfrak{s}(M)=\hol(M)$ is finite-dimensional
provided that $M$ is {\it holomorphically nondegenerate}, see \cite[\S 11.3, \S12.5]{BER}, \cite{E,St}. Regarding the maximal symmetry dimension $D_{\hbox{\tiny\rm max}}$, the following is a variant of Beloshapka's conjecture, cf.~\cite[p.~38]{B2}. The authors of \cite{KS2} argument that for $n=1$ this is a version of
Poincar\'e's probl\`eme local \cite{Po}.

 \begin{conjecture}\label{beloshapka}
For any real-analytic connected holomorphically nondegenerate CR-hypersurface $M$ of CR-dimension $n$ one has $\dim\mathfrak{s}(M)\le n^2+4n+3$, with the maximal value $n^2+4n+3$ attained only if on a dense open set $M$ is spherical.
 \end{conjecture}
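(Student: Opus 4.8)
The plan is to prove Conjecture~\ref{beloshapka} by reducing it to the alternative established above for hypersurfaces possessing a point of Levi-nondegeneracy, and then to isolate the genuinely open part as the case of an everywhere Levi-degenerate $M$. The first step exploits real-analyticity. The set $M_{\mathrm{nd}}\subset M$ of points where ${\mathcal L}_M$ is nondegenerate is open, and its complement is the common zero locus of the real-analytic functions given by the minors of the Levi form; hence $M\setminus M_{\mathrm{nd}}$ is a proper real-analytic subset whenever $M_{\mathrm{nd}}\neq\emptyset$. Since $M$ is connected, this yields a clean dichotomy: either $M_{\mathrm{nd}}$ is dense and open, or $M$ is Levi-degenerate at every point. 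Moreover, infinitesimal CR-automorphisms solve an overdetermined real-analytic system (involutive after finitely many prolongations), so by unique continuation the restriction map $\mathfrak{s}(M)\to\mathfrak{s}(U)$ is injective for every nonempty open $U\subset M$; this is what lets me pass freely between $M$ and its regular open subsets.

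Consider first the case $M_{\mathrm{nd}}\neq\emptyset$. Pick a connected component $U$ of $M_{\mathrm{nd}}$: it is a connected Levi-nondegenerate real-analytic CR-hypersurface of CR-dimension $n$, so the Chern--Moser bound \cite{CM} gives $\dim\mathfrak{s}(U)\le n^2+4n+3$, and the injection $\mathfrak{s}(M)\hookrightarrow\mathfrak{s}(U)$ gives $\dim\mathfrak{s}(M)\le n^2+4n+3$. For the equality clause I invoke the alternative proved in this paper: since $M$ has a Levi-nondegenerate point, either $\dim\mathfrak{s}(M)=n^2+4n+3$ with $M$ spherical everywhere, or $\dim\mathfrak{s}(M)\le n^2+2n+2+\delta_{2,n}<n^2+4n+3$. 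Thus equality forces the first option, whence $M$ is spherical on all of $M$ and \emph{a fortiori} on a dense open set. This settles the conjecture whenever a Levi-nondegenerate point exists.

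The remaining, and genuinely hard, case is that of $M$ holomorphically nondegenerate but Levi-degenerate at every point. By \cite{BER} holomorphic nondegeneracy is equivalent to finite nondegeneracy at a generic point, so on a dense open set $U$ the Levi form has a kernel of some constant rank $r\ge1$ and $M$ is uniformly $k$-nondegenerate for a finite $k\ge2$. My strategy is to bound $\dim\mathfrak{s}(U)$ by Tanaka-type prolongation \cite{Ta1}: on $U$ the reduced CR-structure is a filtered $G$-structure whose symbol $\mathfrak{m}$ is the degenerate analogue of the Heisenberg algebra (its Levi bracket has an $r$-dimensional kernel), and $\dim\mathfrak{s}(U)$ is bounded by $\dim\mathfrak{g}(\mathfrak{m},\mathfrak{g}_0)$. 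The point to be proved is the purely algebraic inequality $\dim\mathfrak{g}(\mathfrak{m},\mathfrak{g}_0)<n^2+4n+3=\dim\SU(p+1,q+1)$ (with $p+q=n$) for every degenerate symbol $r\ge1$, i.e.\ that a nonzero Levi kernel strictly lowers the prolongation below the flat $\SU(p+1,q+1)$-model. As an alternative, more geometric route, in the spirit of the blow-up technique used below, I would relate $M$ to Levi-nondegenerate hypersurfaces of controlled dimension obtained by partially resolving the degeneracy, and transfer the bound from the nondegenerate strata.

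The main obstacle is precisely this last case: establishing the prolongation bound uniformly in the CR-dimension $n$ and in the nondegeneracy order $k$. At present such estimates are available only in special situations (notably uniformly $2$-nondegenerate hypersurfaces in low dimension, where the relevant reduced geometry has been described explicitly), and a symbol-by-symbol analysis does not obviously produce a uniform strict gap. A robust proof would therefore require either a uniform estimate on the Tanaka prolongation of all degenerate CR-symbols, or a blow-up reduction of the everywhere-degenerate case to the Levi-nondegenerate one that is faithful on infinitesimal automorphisms; either of these is, to my knowledge, the crux that keeps the full conjecture open.
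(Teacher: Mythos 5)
The statement you were asked to prove is labelled a \emph{conjecture} in the paper, and the paper does not prove it: it establishes only the special case in which $M$ has a Levi-nondegenerate point (Theorem \ref{main1}), records that the cases $n=1,2$ are known ($n=1$ because holomorphic nondegeneracy forces a Levi-nondegenerate point in dimension $3$; $n=2$ by the reduction to absolute parallelisms in \cite{IZ}), and in Section \ref{S51} explicitly leaves the everywhere-Levi-degenerate case for $n\ge 3$ open. Your handling of the case $M_{\mathrm{nd}}\neq\emptyset$ is correct and is essentially the paper's own route: the Levi-degeneracy locus is a proper real-analytic subset, restriction of $\mathfrak{s}(M)$ to any nonempty open subset is injective by analyticity and connectedness, the Chern--Moser bound gives $\dim\mathfrak{s}(M)\le n^2+4n+3$, and since $n^2+2n+2+\delta_{2,n}<n^2+4n+3$ the equality case falls into alternative (i) of Theorem \ref{main1}, giving sphericity everywhere (stronger than the conjectured generic sphericity).

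The genuine gap is exactly where you place it: the everywhere-Levi-degenerate, holomorphically nondegenerate case for $n\ge 3$. Your proposed strategy --- pass to a dense open set where $M$ is uniformly finitely nondegenerate and bound $\dim\mathfrak{s}$ by the Tanaka prolongation of the degenerate symbol --- is the same programme the paper itself sketches in Section \ref{S51} (appealing to \cite{K1}, and to the partial results of \cite{P,PZ,Sa} for $2$-nondegenerate structures), but neither you nor the paper carries it out: the required uniform strict inequality $\dim\mathfrak{g}(\mathfrak{m},\mathfrak{g}_0)<n^2+4n+3$ for all degenerate CR-symbols is not established, and for $k$-nondegenerate structures with $k\ge 2$ the relevant geometry is not parabolic, so the standard prolongation machinery does not directly apply. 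So your proposal should be read as a correct reduction of the conjecture to its genuinely open core, not as a proof; to match what the paper actually demonstrates you would add the hypothesis of a Levi-nondegeneracy point, at which point your first paragraph together with Theorem \ref{main1} settles the claim.
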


For $n=1$ the above conjecture holds true since a 3-dimensional holomorphically nondegenerate CR-hypersurface always has points of Levi-nonde\-generacy. For $n=2$ the conjecture was established in \cite{IZ} where the proof relied on a reduction of 5-dimensional uniformly Levi-degenerate 2-nonde\-generate CR-structures to absolute parallelisms (see \cite[\S 11.1]{BER} for the definition of $k$-nondegeneracy). Thus, for real-analytic connected holomorphically nondegenerate CR-hypersurfaces of CR-dimen\-sion $1\leq n\leq 2$ one has,  just as in the Levi-nondegenerate case, $D_{\hbox{\tiny\rm max}}=n^2+4n+3$.

It was shown in \cite{KS2} that for $n=1$ the condition $\dim\hol(M,x)>5$ for $x\in M$ implies that $M$ is spherical near $x$, where $\hol(M,x)$ is the Lie algebra of germs at $x$ of real-analytic vector fields on $M$ whose flows consist of CR-transformations. In \cite{IK1} we gave a short proof of this fact, and, applying the argument of \cite{IK1} to the symmetry algebra $\mathfrak{s}(M)$ instead of $\hol(M,x)$, one also obtains $D_{\hbox{\tiny\rm smax}}=5$. Notice that the result of \cite{KS2,IK1} improves on the statement of Conjecture \ref{beloshapka} for $n=1$ by replacing the assertion of generic sphericity of $M$ by that of sphericity everywhere.

Further, in the recent paper \cite{IK2} we considered the case $n=2$. It was shown that in this situation
either $\dim\mathfrak{s}(M)=15$ and $M$ is spherical, or $\dim\mathfrak{s}(M)\le11$ with the equality occurring
only if on a dense open subset $M$ is spherical with Levi form of signature $(1,1)$. This result
improves on the statement of Conjecture \ref{beloshapka} for $n=2$ as it yields sphericity near every point of $M$.
In addition, we constructed a series of examples of pairwise nonequivalent CR-hypersurfaces with
$\dim\mathfrak{s}(M)=11$ thus establishing $D_{\hbox{\tiny\rm smax}}=11$.
This fact also led to the following analogue of the result of \cite{KS2} for $n=2$: the condition $\dim\hol(M,x)>11$
for $x\in M$ implies that $M$ is spherical near $x$, and this estimate is sharp.

\subsection{Main results}

In the present paper we assume that $n$ is arbitrary and that the Levi-nondegeneracy locus is nonempty,
which is a condition stronger than holomorphic nondegeneracy. Of course, in this case $M$ is Levi-nondegenerate
on a dense open subset of $M$, perhaps with different Levi-signatures at different points, and the symmetry dimension
is finite. One of our goals is to determine the maximal and submaximal dimensions in this situation.

 \begin{theorem}\label{main1}
Assume that $M$ is a real-analytic connected CR-hypersurface of CR-dimension $n\ge 1$ having a point of Levi-nondegeneracy. Then for its symmetry algebra $\mathfrak{s}=\hol(M)$ exactly one of the two situations is possible:
 \begin{itemize}
\item[{\rm (i)}] $\dim\mathfrak{s}=n^2+4n+3$ and $M$ is spherical everywhere,
\item[{\rm (ii)}] $\dim\mathfrak{s}\le n^2+2n+2+\delta_{2,n}$ and in the case of equality $M$ is spherical
on its Levi-nondegeneracy locus with fixed signature of the Levi form.
 \end{itemize}
Moreover, the upper bound in (ii) is realizable and so the submaximal dimension is
$D_{\hbox{\tiny\rm smax}}=n^2+2n+2+\delta_{2,n}$.
 \end{theorem}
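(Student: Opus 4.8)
The plan is to reduce the estimate to the Levi-nondegenerate theory on the nondegeneracy locus, then to pass to the flat model and read off the bound from the subalgebra structure of the pseudo-unitary algebra, and finally to produce the extremal hypersurfaces by blow-up. First I would fix the local picture: the Levi-degeneracy locus $S\subset M$ is a proper real-analytic subset, so $U=M\setminus S$ is open, dense and Levi-nondegenerate, and each $X\in\mathfrak{s}(M)$ preserves $S$. Since $M$ is connected and everything is real-analytic, restriction to the germ at any $x_0\in U$ gives an injection $\mathfrak{s}(M)\hookrightarrow\hol(M,x_0)$, whence $\dim\mathfrak{s}(M)\le\dim\hol(M,x_0)\le n^2+4n+3$, with equality forcing sphericity at $x_0$. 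The cases $n\le 2$ are already covered by \cite{KS2,IK1,IK2}, so I would assume $n\ge 3$; then $\delta_{2,n}=0$ and the target bound in (ii) is $n^2+2n+2$. The argument splits according to whether $M$ is spherical on all of $U$: if $M$ is non-spherical at some $x_0\in U$, the local gap phenomenon for Levi-nondegenerate CR-germs (cf.\ \cite{K2}) bounds $\dim\hol(M,x_0)$ by the submaximal value, which is at most $n^2+4$; since $n^2+4<n^2+2n+2$ for $n\ge 3$, we land in (ii) with strict inequality and nothing further is needed.

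If instead $M$ is spherical at every point of $U$, then $U$ carries a flat CR Cartan geometry modelled on $\mathcal{Q}_k=G/P$, with $G=\SU(k+1,n-k+1)$ and $\g=\mathfrak{su}(k+1,n-k+1)$, $\dim\g=n^2+4n+3$. The infinitesimal automorphisms of a flat Cartan geometry embed into $\g$, so on each component of $U$ I obtain $\mathfrak{s}(M)\hookrightarrow\g$ as a subalgebra; the ambient dimension $n^2+4n+3$ is the same for every signature, so I may argue numerically without yet fixing $k$. If the image is all of $\g$, then $\mathfrak{s}(M)$ acts locally transitively with parabolic isotropy and $M$ is the homogeneous model, hence Levi-nondegenerate and spherical everywhere with $S=\varnothing$: this is alternative (i). Otherwise the image is a proper subalgebra, and the bound in (ii) reduces to the Lie-theoretic fact that every proper subalgebra of $\mathfrak{su}(k+1,n-k+1)$ has dimension at most $n^2+2n+2$ for $n\ge 3$, attained by the maximal parabolic $\pp=\g_{\ge 0}$ stabilising an isotropic line, whose dimension is $n^2+2n+2$ and codimension $2n+1=\dim M$; the remaining real parabolics and the reductive maximal subalgebras are strictly smaller. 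This also yields the equality clause: equality in (ii) excludes the non-spherical case, pins $\mathfrak{s}(M)$ to a top-dimensional proper subalgebra of a single $\g$, and thus forces $M$ to be spherical on $U$ with constant Levi signature $(k,n-k)$. For $n=2$ the same scheme applies once one accounts for the exceptional subalgebra of $\mathfrak{su}(p,q)$, $p+q=4$, of dimension $n^2+2n+3=11$, which is the origin of the term $\delta_{2,n}$.

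It remains to show the bound is attained, which I expect to be the crux. Here I would realise the maximal parabolic $\pp$ as $\mathfrak{s}(M)$ for an honest CR-hypersurface by blow-up: starting from $\mathcal{Q}_k$, on which $G$ acts, I would blow up the orbit fixed by $P$ so that the resulting hypersurface develops a Levi-degeneracy locus along the exceptional set while retaining precisely the symmetries that preserve that locus, thereby cutting $\g$ down to $\pp$. The technical heart is to check that the model so obtained is real-analytic, connected, holomorphically nondegenerate with nonempty Levi-nondegeneracy locus, and that its symmetry algebra is \emph{exactly} $\pp$ — not larger, which is already excluded by the upper bound once $S\ne\varnothing$, and not smaller, which requires verifying that every element of $\pp$ integrates to a global CR-symmetry of the blow-up. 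For $n=2$ one must in addition exhibit a model realising the one-dimension-larger exceptional algebra, recovering the series of \cite{IK2}. Together with the upper bound this gives $D_{\hbox{\tiny\rm smax}}=n^2+2n+2+\delta_{2,n}$, and the main difficulty throughout is the interplay between the degeneracy locus $S$ and the flat structure: controlling the development across $S$ for the bound, and engineering $S$ by blow-up for the construction.
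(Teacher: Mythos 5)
Your proposal follows essentially the same route as the paper: the same dichotomy (some non-spherical point of $U$ versus sphericity on all of $U$), the same reduction of the bound in (ii) to the classification of proper subalgebras of maximal dimension in $\mathfrak{su}(p,q)$ (the paper's Theorem \ref{subalgmaxdim}, proved via Mostow and Dynkin), the same identification of the signature from the abstract isomorphism type of the top-dimensional parabolics, and the same realization of the extremal value by blowing up the hyperquadric at a point. So the comparison reduces to whether each step is actually carried out.

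There is one genuine gap, in alternative (i). You assert that if the image of $\mathfrak{s}(M)$ in $\g=\mathfrak{su}(p,q)$ is all of $\g$, then ``$\mathfrak{s}(M)$ acts locally transitively with parabolic isotropy and $M$ is the homogeneous model, hence \dots\ $S=\varnothing$.'' Local transitivity is clear on the dense open spherical set $U$, but it does not propagate for free to a point $x_0$ of the Levi-degeneracy locus $S$: a Lie algebra of vector fields can act with open orbits on a dense open set and still have a fixed point in the closure (compare $\mathfrak{sl}(2,\R)$ acting linearly on $\R^2$). This is exactly the case the paper must exclude. Its argument is: the isotropy subalgebra $\hol_0(M)$ at $x_0\in S$ has dimension at least $\dim\g-(2n+1)=n^2+2n+2$, so by Theorem \ref{subalgmaxdim} either (ia) $\dim\hol_0(M)=n^2+2n+2$, in which case the orbit of $x_0$ is open, meets $U$, and sphericity spreads to $x_0$; or (ib) $\hol_0(M)=\g$, i.e.\ every symmetry vanishes at $x_0$, which is ruled out by the Guillemin--Sternberg linearization theorem: a simple algebra acting with a fixed point linearizes to a faithful real representation of dimension $2n+1$, while the minimal faithful representation of $\mathfrak{su}(p,q)$ has real dimension $2n+4$. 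Without some version of this two-case analysis your claim that $S=\varnothing$ in case (i) — which is precisely the strengthening of sphericity ``everywhere'' rather than ``generically'' — is unproved. The rest of your outline (the non-spherical bound $n^2+4$ via \cite{K2}, the constancy of the signature from non-isomorphism of the extremal parabolics, and the blow-up realization of $\mathfrak{p}_{1,n+1}$, with the $n=2$ case delegated to \cite{IK2}) matches the paper and is sound, granting Theorem \ref{subalgmaxdim} as a black box.
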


This result improves on the statement of Conjecture \ref{beloshapka}. Note that the result is global in $M$,
even if one takes $M=U$ to be a small fixed neighborhood of a point $x\in M$.
The proof of the theorem also leads to the following local version of the result, generalizing
theorems from \cite{KS2,IK1,IK2} for arbitrary $n$.

 \begin{corollary}\label{Cor1}
With the assumptions of Theorem \ref{main1} in the case $n\ge 3$ the condition $\dim\hol(M,x)>n^2+2n+2$ for
$x\in M$ implies that $M$ is spherical in a neighborhood of the point $x$, and this estimate is sharp.
 \end{corollary}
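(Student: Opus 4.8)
The plan is to derive this local statement from the global Theorem \ref{main1} by a localization argument, the two essential inputs being the passage from germs at $x$ to honest symmetries on a small neighborhood and the fact that every neighborhood inherits the Levi-nondegeneracy hypothesis. The genuinely hard content, Theorem \ref{main1}, is assumed, so the work here is to set up the localization carefully.

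First I would record the geometric observation that makes neighborhoods eligible for Theorem \ref{main1}. Since $M$ is connected, real-analytic and has at least one Levi-nondegenerate point, the Levi-degeneracy locus is the zero set of the real-analytic function $\det\mathcal{L}_M$ (computed in any local trivialization of $H^{(1,0)}(M)$ and $T(M)/H(M)$), which is not identically zero; by the identity principle it is therefore a proper real-analytic subset, hence closed with empty interior. Consequently the Levi-nondegeneracy locus is open and dense, and every nonempty connected open subset $U\subseteq M$ is again a real-analytic connected CR-hypersurface of CR-dimension $n$ possessing a Levi-nondegenerate point; that is, $U$ satisfies the hypotheses of Theorem \ref{main1}.

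Next I would localize the symmetry. Assuming $\dim\hol(M,x)>n^2+2n+2$, choose $N:=n^2+2n+3$ germs at $x$ that are linearly independent in $\hol(M,x)$. Each is represented by an infinitesimal CR-automorphism on some neighborhood of $x$; intersecting the finitely many domains and shrinking to a connected open $U\ni x$, the restrictions $v_1|_U,\dots,v_N|_U$ lie in $\hol(U)$, and they remain linearly independent, because a real-analytic vanishing relation on the connected set $U$ would hold near $x$ and contradict independence of the germs. Hence $\dim\hol(U)\ge N>n^2+2n+2=n^2+2n+2+\delta_{2,n}$, the last equality using $n\ge 3$. Applying Theorem \ref{main1} to $U$, alternative (ii) is excluded by the dimension count, so alternative (i) holds and $U$ is spherical everywhere; as $U$ is a neighborhood of $x$, this proves that $M$ is spherical near $x$.

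Finally, for sharpness I would invoke the realizability part of Theorem \ref{main1}. Let $M_0$ be an extremal example with $\dim\hol(M_0)=n^2+2n+2$; being in alternative (ii) with equality, it is not spherical everywhere (that would force the maximal dimension $n^2+4n+3$), so its Levi-degeneracy locus is nonempty. At a Levi-degenerate point $x_0$ the hypersurface $M_0$ cannot be spherical near $x_0$, since local CR-equivalence to a hyperquadric would make $x_0$ Levi-nondegenerate. On the other hand, restriction of global symmetries to germs gives an injection $\hol(M_0)\hookrightarrow\hol(M_0,x_0)$, whence $\dim\hol(M_0,x_0)\ge n^2+2n+2$, while the implication just proved forces $\dim\hol(M_0,x_0)\le n^2+2n+2$ at the non-spherical point $x_0$; thus $\dim\hol(M_0,x_0)=n^2+2n+2$ is attained non-spherically and the threshold cannot be lowered. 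The only real subtlety in the whole argument is verifying that shrinking to a neighborhood preserves both the linear independence of the chosen symmetries and the Levi-nondegeneracy hypothesis, which is precisely what the density of the Levi-nondegeneracy locus secures.
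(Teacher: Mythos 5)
Your proof of the main implication is correct and follows essentially the same route as the paper: localize to a connected neighborhood $U$ of $x$, note that $U$ inherits the hypotheses of Theorem \ref{main1} because the Levi-nondegeneracy locus is open and dense, and then rule out alternative (ii) by the dimension count $\dim\hol(U)>n^2+2n+2=n^2+2n+2+\delta_{2,n}$ for $n\ge 3$. The only difference in mechanism is that the paper invokes \cite[Proposition 12.5.1]{BER} to produce a $U$ with $\hol(U)\to\hol(M,x)$ surjective (hence $\hol(U)=\hol(M,x)$), whereas you restrict $n^2+2n+3$ independent germ representatives to a common connected domain and settle for the inequality $\dim\hol(U)\ge n^2+2n+3$. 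That inequality is all the argument needs, so your version is slightly more self-contained; it buys nothing beyond avoiding one citation.

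There is, however, a genuine gap in your sharpness argument. You take an abstract extremal $M_0$ with $\dim\hol(M_0)=n^2+2n+2$ and assert that it ``is not spherical everywhere (that would force the maximal dimension $n^2+4n+3$).'' Theorem \ref{main1} does not license this: alternative (i) states that the maximal dimension implies sphericity everywhere, not the converse. A connected hypersurface can be spherical everywhere and still have a global symmetry algebra of small dimension (quotients such as lens spaces, or spherical manifolds with nontrivial topology, already show that sphericity does not force $\dim\hol(M)=n^2+4n+3$), so nothing in the theorem guarantees that your $M_0$ has a nonempty Levi-degeneracy locus. The correct fix is to use a concrete extremal model rather than an abstract one: the hypersurfaces $M_{m,\varepsilon}$ of Theorem \ref{Model0} (or the blow-up model of Corollary \ref{realizationP}) have $\hol=\mathfrak{p}_{1,n+1}$ of dimension $n^2+2n+2$ and are visibly Levi-degenerate along the nonempty set $\{w=0\}$, hence not spherical there; combined with the injection $\hol(M_0)\hookrightarrow\hol(M_0,x_0)$ and the implication you just proved, this yields $\dim\hol(M_0,x_0)=n^2+2n+2$ at a non-spherical point, which is exactly the sharpness claim. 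The rest of your sharpness reasoning (a spherical neighborhood would force Levi-nondegeneracy at $x_0$; the two-sided dimension squeeze) is sound.
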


As in papers \cite{IK1,IK2}, our argument relies on the techniques from Lie theory,
notably on the description of proper subalgebras of maximal dimension of $\mathfrak{su}(p,q)$
obtained in Theorem \ref{subalgmaxdim}, where $1\le p\le q$, $p+q\ge 3$.
These pseudo-unitary algebras are precisely the maximal symmetry algebras of spherical models.
We show that among proper maximal subalgebras of those the maximal dimension is attained
on certain parabolic subalgebras. This raises the question if all parabolic subalgebras can be
symmetries of CR-hypersurfaces. To this we answer affirmatively as follows.

% Recall that $\mathfrak{su}(p,q)$ is a simple Lie algebra of type $A_{n+1}$, where $p+q=n+2$.
% Its maximal parabolic subalgebras (up to conjugation) are bijective with a pair of white nodes on
% the Satake diagram connected by an arrow (or one middle node if $p=q$). Such subalgebra ...

 \begin{theorem}\label{main2}
All maximal parabolic subalgebras of the pseudo-unitary algebra $\mathfrak{su}(p,q)$ are realizable
as the symmetry of a certain blow up of the standard hyperquadric \eqref{QK}.
 \end{theorem}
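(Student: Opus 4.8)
The plan is to pass to the projective (compact) model of the hyperquadric and to realize each maximal parabolic subalgebra as the \emph{exact} stabilizer of an isotropic complex-linear subspace, then promote that stabilizer to the full symmetry algebra by blowing up along the subspace. Recall that $\mathfrak{su}(p,q)$ with $p\le q$ is the symmetry algebra of the hyperquadric $\mathcal{Q}_k$ of \eqref{QK} for $p=k+1$, $q=n-k+1$, and that its real rank is $p$, so it has exactly $p$ maximal parabolic subalgebras $\mathfrak{p}_1,\dots,\mathfrak{p}_p$ (these being among the proper subalgebras of largest dimension identified in Theorem \ref{subalgmaxdim}); here $\mathfrak{p}_d$ is the stabilizer in $\mathfrak{su}(p,q)$ of a totally isotropic complex subspace $V_d\subset\C^{p+q}$ of dimension $d$, all such subspaces being $\SU(p,q)$-conjugate by Witt's theorem. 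Realizing $\mathcal{Q}_k$ as an affine chart of the projective hyperquadric $\mathcal{H}=\{[Z]\in\C\PP^{n+1}:\langle Z,Z\rangle=0\}$, on which $\PSU(p,q)$ acts transitively by CR-automorphisms, the subspace $V_d$ determines a projective-linear subspace $L_d=\PP(V_d)\cong\C\PP^{d-1}\subset\mathcal{H}$ lying inside the quadric, and $\mathfrak{p}_d$ is precisely the stabilizer of $L_d$. After a pseudo-unitary change of coordinates $L_d$ is the closure of an affine complex $(d-1)$-plane contained in $\mathcal{Q}_k$ through the origin (a single point when $d=1$).

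First I would form the blow-up $\pi\colon X\to\C\PP^{n+1}$ along $L_d$, with exceptional divisor $E$, and let $\ti M\subset X$ be the proper transform of $\mathcal{H}$, i.e.\ the closure of $\pi^{-1}(\mathcal{H}\setminus L_d)$. Since $L_d$ is a complex submanifold contained in the real-analytic hypersurface $\mathcal{H}$, a local computation in CR-adapted coordinates shows that $\ti M$ is again a smooth real-analytic hypersurface of the complex manifold $X$, hence a CR-hypersurface, and that $\pi$ restricts to a CR-isomorphism $\ti M\setminus E\xrightarrow{\ \sim\ }\mathcal{H}\setminus L_d$. In particular $\ti M$ is spherical off $E$, while the exceptional locus $\ti M\cap E$ is an invariantly attached complex submanifold of $\ti M$ along which the Levi form degenerates; thus the Levi-degeneracy locus of $\ti M$ is nonempty, contained in $E$, and satisfies $\pi(\ti M\cap E)=L_d$. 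The lower bound $\mathfrak{p}_d\subseteq\hol(\ti M)$ is then immediate: every $g\in\PSU(p,q)$ preserving $L_d$ lifts canonically to a biholomorphism of $X$ preserving $E$ and $\ti M$, and differentiating this action embeds $\mathfrak{p}_d$ into the symmetry algebra $\hol(\ti M)$.

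For the reverse inclusion, take $v\in\hol(\ti M)$. On the spherical part $\ti M\setminus E$ the field $\pi_*v$ is an infinitesimal CR-automorphism of an open dense subset of the nondegenerate hyperquadric, so by the rigidity of $\mathcal{H}$ (Chern--Moser \cite{CM}) it is the restriction of a unique $v_0\in\mathfrak{su}(p,q)$, regarded as a holomorphic vector field on $\C\PP^{n+1}$ tangent to $\mathcal{H}$. Because $v$ agrees with the lift of $v_0$ on the dense set $\ti M\setminus E$ and extends real-analytically across $E$, the field $v_0$ must be tangent to $L_d$ — a holomorphic vector field lifts regularly to the blow-up of a center only if it preserves that center — whence $v_0\in\mathfrak{p}_d$. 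This gives $\hol(\ti M)\subseteq\mathfrak{p}_d$, and combined with the previous paragraph $\hol(\ti M)=\mathfrak{p}_d$. Running the construction for each $d=1,\dots,p$ realizes all maximal parabolic subalgebras of $\mathfrak{su}(p,q)$, as claimed.

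The main obstacle I expect is the interface between the algebraic blow-up and the CR-structure, concentrated in two places. First, one must verify in explicit local coordinates that the proper transform $\ti M$ is genuinely smooth and real-analytic along $E$ and that its Levi-degeneracy locus is exactly the part of $E$ lying over $L_d$ (so that $L_d$ is faithfully remembered by the intrinsic geometry of $\ti M$); this is delicate because blowing up a complex submanifold that sits inside a real hypersurface interacts nontrivially with the Levi form transverse to $E$. Second, the rigidity step requires care: one must ensure that the local symmetry $\pi_*v$ extends to a single global element of $\mathfrak{su}(p,q)$ and that its regularity across $E$ forces tangency to $L_d$ rather than to some smaller or larger invariant set. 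By contrast, the Lie-theoretic identification of $\mathfrak{p}_d$ with the stabilizer of $L_d$ and the lower-bound lifting are comparatively routine.
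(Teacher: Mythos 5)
Your strategy is essentially the one the paper uses: blow up the (projectivized) hyperquadric along a linear subspace $L$ contained in it, show that symmetries of the blow-up descend to elements of $\mathfrak{su}(p,q)$ tangent to $L$ and that the stabilizer $\mathfrak{p}_{s,n-s+2}$ of $L$ lifts, hence the symmetry algebra is exactly the maximal parabolic. This is the content of the paper's Theorem \ref{symBlUp} combined with Examples \ref{Ex5}--\ref{Ex6}; the paper's formal proof of Theorem \ref{main2} (Theorem \ref{realizgenparabolics}) additionally writes down explicit affine models and explicit generators of $\mathfrak{p}_{s,n-s+2}$ as vector fields, and in fact produces two inequivalent realizations. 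Your descent argument via Chern--Moser rigidity on the spherical complement of $E$ is a legitimate substitute for the paper's more general mechanism (tangency to the Levi-degeneracy locus plus Hartogs extension across $L$), and the lifting criterion you invoke --- a holomorphic field lifts regularly through a blow-up iff it preserves the center --- is correct and is exactly how the paper characterizes the image of $q_L$.

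There is, however, one concrete error: the claim that the proper transform $\ti M$ is a smooth real-analytic hypersurface along $E$. It is not. By the paper's Lemma \ref{LX}, the total (equivalently, proper) transform of $M$ has genuine conical singularities at exactly the points $\tilde{x}=(x,\Pi)$ with $x\in L$ and $\Pi\subset H_x(M)$; since $\dim_\C\Pi=d\le n=\dim_\C H_x$, this singular locus $\Sigma_{\tilde M}$ is nonempty for every maximal parabolic (already for the point blow-up, $d=1$). The CR blow-up must therefore be \emph{defined} as $\tilde M\setminus\Sigma_{\tilde M}$, and your subsequent assertions (that $\ti M\cap E$ is a complex submanifold of $\ti M$, that the Levi-degeneracy locus is exactly the exceptional part) hold only on this nonsingular part. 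You flagged smoothness as the delicate point but then asserted the wrong conclusion; the fix is harmless for the rest of your argument (the removed set does not disconnect anything and the descent/lift steps go through verbatim on $\tilde M\setminus\Sigma_{\tilde M}$), but as written the construction of the model is incorrect. A secondary, minor gap: in the tangency step you need $v_0(x)\in\bigcap\Pi$ over the \emph{nonsingular} fibers $\Pi\not\subset H_x$ only, so you should note that these still intersect in $T_xL$; this is true since the excluded $\Pi$ form a proper closed subset.
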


We suggest that other (non-maximal) parabolic subalgebras can be realized as symmetries of iterated blow-ups,
and we demonstrate this in the first non-trivial case of CR-dimension $n=2$. This contributes to the
models with large symmetry algebras considered in \cite{IK2}.
The general problem is discussed in the conclusion of the paper.

Note that a blow-up construction in CR-geometry has been discussed so far only phenomenologically \cite{KS1,KL},
and even a formal definition of this procedure was lacking in general (so rather a blow-down has been
identified in loc.cit.).
We approach the general problem in Section \ref{S21}. The relation of such blow-up to symmetry is not straightforward.
We discuss it in Sections \ref{S22}-\ref{S23}. For instance, we will show that an
iterative blow-up (which can be considered as one blow-up from  the na\"{\i}ve topological viewpoint)
can reduce the symmetry beyond expectations.

It is not true that all sub-maximally symmetric models can be obtained by the proposed blow-up construction.
This concerns the series of models in \cite{IK2} and we construct more examples in Section \ref{S41}.
Actually, Theorem \ref{Model0} gives a series of examples of pairwise nonequivalent CR-hypersurfaces with
the submaximal value $\dim\mathfrak{s}(M)=n^2+2n+2$ for $n\neq2$.
However all examples we constructed and investigated can be shown (in many cases a-posteriori) to be obtained
by a blow-up with an additional ramified covering that we describe in Section \ref{S43}.
This gives a new powerful tool for generating symmetric models in CR-geometry.

Finally, let us characterize Lie groups of automorphisms with large dimensions.

 \begin{theorem}\label{main3}
Under the assumptions of Theorem \ref{main1} the automorphism group $G=\op{Hol}(M)$ satisfies one of the  alternatives:
 \begin{itemize}
\item[{\rm (i)}] $\dim G=n^2+4n+3$ and $M$ is spherical everywhere,
\item[{\rm (ii)}] $\dim G\le n^2+2n+2+\delta_{2,n}$ and in the case of equality $M$ is spherical
on its Levi-nondegeneracy locus with fixed signature of the Levi form.
 \end{itemize}
The upper bound in (ii) is realizable, implying that the submaximal dimension of
the automorphism group is the same $D_{\hbox{\tiny\rm smax}}$ as in the Lie algebra case.
 \end{theorem}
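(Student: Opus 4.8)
The plan is to deduce the group statement from the Lie-algebra statement of Theorem \ref{main1} together with the subalgebra bound of Theorem \ref{subalgmaxdim}, by passing between $G=\op{Hol}(M)$ and $\mathfrak{s}=\hol(M)$ through the infinitesimal generators of the $G$-action. Under the standing hypotheses $\mathfrak{s}=\hol(M)$ is finite-dimensional, so a CR-automorphism of the connected $M$ is determined by a finite jet at one point; this is what makes $G=\op{Hol}(M)$ a finite-dimensional Lie group acting real-analytically on $M$. Differentiating the action produces a Lie algebra monomorphism $\mathfrak{g}=\op{Lie}(G)\hookrightarrow\hol(M)=\mathfrak{s}$, whose image is exactly the subalgebra of $\mathfrak{s}$ consisting of the complete infinitesimal automorphisms: every complete field generates a one-parameter subgroup of $\op{Hol}(M)$, and every element of $\mathfrak{g}$ is complete. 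In particular $\mathfrak{g}$ is a subalgebra of $\mathfrak{s}$ and $\dim G=\dim\mathfrak{g}\le\dim\mathfrak{s}$. This single inclusion is the engine of the whole argument.

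Next I would split according to Theorem \ref{main1}. If $\dim\mathfrak{s}\le n^2+2n+2+\delta_{2,n}$, then $\dim G\le\dim\mathfrak{s}\le n^2+2n+2+\delta_{2,n}$, placing $G$ in alternative (ii); moreover if here $\dim G$ equals the bound, then so does $\dim\mathfrak{s}$, and the equality clause of Theorem \ref{main1}(ii) supplies sphericity on the Levi-nondegeneracy locus with fixed Levi signature. If instead $\dim\mathfrak{s}=n^2+4n+3$, then $M$ is spherical everywhere; since $M$ is connected and $\dim\hol(M)$ equals the maximal value, $\mathfrak{s}$ is the simple pseudo-unitary algebra $\mathfrak{su}(p,q)$ with $p+q=n+2$. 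Now either $\mathfrak{g}=\mathfrak{s}$, giving $\dim G=n^2+4n+3$ and alternative (i), or $\mathfrak{g}$ is a \emph{proper} subalgebra of the simple algebra $\mathfrak{su}(p,q)$, in which case Theorem \ref{subalgmaxdim} forces $\dim\mathfrak{g}\le n^2+2n+2+\delta_{2,n}$, again alternative (ii). In this branch $M$ is spherical everywhere, hence Levi-nondegenerate of constant signature on connected $M$, so the equality clause of (ii) holds automatically. This produces the dichotomy and, crucially, exactly the same gap for $\dim G$ as for $\dim\mathfrak{s}$.

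It remains to prove realizability, namely that $\dim G=n^2+2n+2+\delta_{2,n}$ is actually attained; this I expect to be the main obstacle, since the estimates above are essentially formal once the inclusion $\mathfrak{g}\subseteq\mathfrak{s}$ is in place. The natural route is to exhibit a \emph{homogeneous} submaximal model: if a CR-hypersurface realizing the bound of Theorem \ref{main1}(ii) can be presented as a homogeneous space $G'/H$ with $\dim G'-\dim H=n^2+2n+2+\delta_{2,n}$, then every infinitesimal symmetry in the relevant $(n^2+2n+2+\delta_{2,n})$-dimensional algebra is automatically complete, whence $\op{Hol}(M)$ attains the bound. I would therefore revisit the explicit models used for the lower bound in Theorem \ref{main1}—for $n\neq 2$ the series of Theorem \ref{Model0}, and the blow-up models in the remaining case—and verify completeness of their submaximal symmetry fields, equivalently that the submaximal symmetry algebra integrates to a transitive (or near-transitive) group action with the correct isotropy dimension.

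The delicate point is precisely this completeness check on the constructed models. On a blow-up, or after removing a Levi-degeneracy or exceptional locus, vector fields that are complete on the ambient quadric can fail to be complete, so one cannot simply transport the completeness of symmetries of $\mathcal{Q}_k$. Hence the genuine geometric work beyond Theorem \ref{main1} lies in confirming that at least one submaximal model carries a global $G$-action of the full dimension $n^2+2n+2+\delta_{2,n}$; once such a model is produced, combining it with the dichotomy above yields $D_{\hbox{\tiny\rm smax}}$ for the automorphism group equal to that for the symmetry algebra, completing the proof.
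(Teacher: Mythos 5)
Your reduction of the dichotomy to Theorem \ref{main1} via the inclusion $\mathfrak{g}=\op{Lie}(G)\subset\mathfrak{s}$, followed by the case split ($\mathfrak{g}=\mathfrak{s}$ versus $\mathfrak{g}$ a proper subalgebra of $\mathfrak{su}(p,q)$, handled by Theorem \ref{subalgmaxdim}) is exactly the paper's argument, and that part of your proposal is correct and complete.

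The genuine gap is the realizability of the bound in (ii): you never produce the model, and you say so yourself ("once such a model is produced\dots"). Worse, the route you propose --- checking completeness of the submaximal symmetry fields on the models of Theorem \ref{Model0} or on the blow-ups --- is pointed in an unpromising direction: on $M_{m,\varepsilon}$ the fields of positive parabolic grading, such as $\op{Re}\bigl(w^{2m}(m\xi+w\p_w)\bigr)$, blow up in finite time, so $\op{Hol}(M_{m,\varepsilon})$ does not attain $\dim\mathfrak{p}_{1,n+1}$, and the same incompleteness phenomenon afflicts the affine blow-up models. The paper sidesteps all of this with a much simpler observation: take the \emph{compact} projective quadric $\overline{\mathcal Q}\subset\C\PP^{n+1}$, on which $PSU(p,q)$ acts globally, and remove a null linear subspace $L^{s-1}$ (a point for $s=1$; a projective line for the case $n=2$, $s=2$). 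The complement is everywhere spherical, so it satisfies the equality clause of (ii) vacuously, its infinitesimal symmetry algebra is still all of $\mathfrak{su}(p,q)$, but the complete fields are precisely those tangent to $L$, so $\op{Hol}(M)$ is the stabilizer $P_{s,n-s+2}$, of dimension exactly $n^2+2n+2+\delta_{2,n}$ for the appropriate $s$ --- by the very definition of the parabolic subgroup, with no completeness computation and no blow-up needed. (The compact blow-up models of Example \ref{Ex6} give an alternative realization with Levi-degenerate points.) Your homogeneity heuristic is the right instinct, but without this concrete construction the realizability claim, and hence the identification of $D_{\hbox{\tiny\rm smax}}$ for the group, remains unproven in your write-up.
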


The structure of the paper is as follows. In Section \ref{S2} we introduce the CR blow-up,
as our main tool to create examples, and we % collect the first fruits by constructing
construct some models with large symmetry algebra/group of automorphims.
In Section \ref{S3}, using the algebraic and analytic techniques, we derive a sharp upper bound on the symmetry
dimension, thus proving the maximal and submaximal symmetry bounds; the reader interested in the gap phenomenon
can proceed directly there. Then in Section \ref{S4} we
provide further examples, containing an infinite sequence of submaximally symmetric and other
models with large symmetry. Finally, in the Conclusion we formulate a more general conjecture
on the symmetry dimension of CR-hypersurfaces and discuss other relevant problems.

\bigskip

\noindent\textsc{Acknowledgements.}
My first and foremost thanks go to Alexander Isaev, who influenced several results in this paper.
Our correspondence was of invaluable help. He brought to my attention a discussion,
where Stefan Nemirovski suggested a method for blowing up hyperquadrics in order to construct CR-manifolds
with submaximal symmetry dimension. I am grateful for this idea, inspiring the following progress.
The suggestion that blow-ups can be useful for submaximal symmetry models was also independently communicated
to the author by Ilya Kossovskiy.

The {\tt DifferentialGeomet\-ry} package of {\tt Maple} was used for extensive experiments
that % are tacitly hidden from the reader's eyes.
 underly symmetry computations for all models in this paper.

\section{The blow-up construction}\label{S2}

Recall a construction from affine geometry. Let $L$ be a subspace of a vector space $V$, $\op{codim}(L,V)=m$.
The blow-up of $V$ along $L$ (below $\Pi$ is a subspace and $x$ a point) is
 $$
\op{Bl}_LV=\{(x,\Pi):x\in\Pi\supset L;\op{codim}(L,\Pi)=1\}.
 $$
This works over any field, in particular for complex $V,L,\Pi$ the blow-up is a 
complex algebraic manifold.
The projection $\pi_L:\op{Bl}_LV\to V$, $(x,\Pi)\mapsto x$, is a biholomorphism when restricted
to $\pi_L^{-1}(V\setminus L)$, and $\pi_L^{-1}(x)=\mathbb{P}(V/L)\simeq\C P^{m-1}$ for $x\in L$.
 % The projection $p_L:\op{Bl}_LV\to\mathbb{P}(V/L)$, $(x,\Pi)\mapsto\Pi$, 
 % is the tautological line bundle $\times L$.
 % (Particular case: $L=o$ is a point.)

The construction canonically extends to complex analytic geometry: if $L$ is a complex submanifold of 
a complex manifold $V$, apply the above formula using local charts $V\supset U_\alpha\simeq\C^n$, 
straightening $L\cap U_\alpha$ and patching the charts to obtain $\op{Bl}_LV$, see e.g.~\cite{H}. 
The projection $\pi_L:\op{Bl}_LV\to V$ is holomorphic and satisfies the same properties:
$\pi_L^{-1}(V\setminus L)\simeq V\setminus L$ and $\pi_L^{-1}(x)\simeq\C P^{m-1}$ for $x\in L$.
Everywhere below we will assume that $m=\op{codim}L>1$, because otherwise $\op{Bl}_LV\simeq V$ for $m=1$.

Our aim is to extends this construction from complex geometry to CR-geometry.
Though such a construction can be given on the abstract level, it is convenient to
present a version for embedded CR-surfaces and we restrict to hypersurfaces.
% In addition, the construction will work in a broader context.
In this section we formulate only the standard blow-up; variations on it, like iterated blow-ups,
weighted blow-ups and ramified coverings will be discussed in Section \ref{S43}.

\subsection{Blow-up in CR-geometry}\label{S21}

Let $\iota:M\hookrightarrow V$ be a real hypersurface in a complex manifold of dimension $n+1$
and $\pi_L:\op{Bl}_LV\to V$ a blow-up along a complex submanifold $L$ meeting $M\equiv\iota(M)$. 
In general, $L$ does not belong to $M$ and the germ of $L$ along $M\subset V$ is uniquely determined 
by $L'=L\cap M$. Define
 $$
\tilde{M}=\pi_L^{-1}(M)=(M\setminus L')\cup\pi_L^{-1}(L')\subset\op{Bl}_LV.
 $$
This subset has singular points $\Sigma_{\tilde{M}}\subset\pi_L^{-1}(L')$. For our purposes it is enough
to describe singularities in an affine chart: $V=\C^{n+1}$ and $L\subset V$ a subspace.

 \begin{lem}\label{LX}
A point $\tilde{x}=(x,\Pi)\in\tilde{M}$ belongs to $\Sigma_{\tilde{M}}$ if and only if $x=\pi_L(\tilde{x})\in L'$ 
and $\Pi\subset H(x)$, where $H(x)$ is the CR-plane of $M$ at the point $x$. 
 \end{lem}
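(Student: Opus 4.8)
The plan is to work in the affine chart provided and to reduce everything to a chain-rule computation for the pulled-back defining function. First I would fix coordinates $z=(z_1,\dots,z_{n+1})$ on $V=\C^{n+1}$ and normalize $L=\{z_1=\dots=z_m=0\}$, where $m=\op{codim}(L,V)\ge 2$. Writing $\op{Bl}_LV\subset V\times\PP^{m-1}$ with homogeneous fiber coordinates $[\xi_1:\dots:\xi_m]$ encoding $\Pi=L+\C\cdot(\xi_1,\dots,\xi_m,0,\dots,0)$, I use the standard affine charts $U_k=\{\xi_k\neq0\}$ with coordinates $(z_k,(\xi_i)_{i\le m,\,i\neq k},(z_j)_{j>m})$ and projection $\pi_L(z_k,\xi,z'')=(z_k\xi_1,\dots,z_k,\dots,z_k\xi_m,z'')$, where $\xi_k=1$. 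Let $\rho$ be a real-analytic defining function of $M$ near $x=\pi_L(\tilde x)$, so that $\tilde M=\{\tilde\rho=0\}$ with $\tilde\rho=\rho\circ\pi_L$. Since $\pi_L$ is a biholomorphism off the exceptional divisor $E=\pi_L^{-1}(L)$ and $\tilde M\cap E=\pi_L^{-1}(L')$, the set $\tilde M$ is already a smooth CR-hypersurface away from $\pi_L^{-1}(L')$; this yields the necessity of $x\in L'$.

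The heart of the argument is to compute $d\tilde\rho$ at a point $\tilde x=(x,\Pi)\in E$ lying over $x\in L'$. By the chain rule the $\xi$-derivatives of $\tilde\rho$ all carry a factor $z_k$ and hence vanish on $E$, while on $E$ one is left with $\partial\tilde\rho/\partial z_k=\sum_{a\le m}\rho_{z_a}(x)\,\xi_a$ and $\partial\tilde\rho/\partial z_j=\rho_{z_j}(x)$ for $j>m$, together with their conjugates, where $\rho_{z_a}=\partial\rho/\partial z_a$. Thus $d\tilde\rho_{\tilde x}=0$ if and only if $\rho_{z_j}(x)=0$ for all $j>m$ and $\sum_{a\le m}\rho_{z_a}(x)\xi_a=0$.

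Next I interpret these two conditions geometrically. The CR-plane is $H(x)=\ker\partial\rho_x=\{v\in\C^{n+1}:\sum_a\rho_{z_a}(x)v_a=0\}$. The vanishing $\rho_{z_j}(x)=0$ for $j>m$ says precisely that $L=\op{span}(e_{m+1},\dots,e_{n+1})\subset H(x)$, while $\sum_{a\le m}\rho_{z_a}(x)\xi_a=\partial\rho_x(w)=0$ says that the generating vector $w=(\xi_1,\dots,\xi_m,0,\dots,0)$ of $\Pi$ modulo $L$ lies in $H(x)$. Since $\Pi=L+\C\,w$, the two conditions together are equivalent to the single inclusion $\Pi\subset H(x)$, which is the assertion of the lemma modulo identifying the critical locus of $\tilde\rho$ with $\Sigma_{\tilde M}$.

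That identification is the step I expect to be the main obstacle. Where $d\tilde\rho\neq0$ the implicit function theorem makes $\tilde M$ a smooth hypersurface, so $\tilde x\notin\Sigma_{\tilde M}$. Where $d\tilde\rho_{\tilde x}=0$ one must rule out accidental smoothness of the zero set. To this end I would expand $\tilde\rho=\phi(z'')+2\Re\bigl(z_k\,\psi(\xi,z'')\bigr)+|z_k|^2R$ with $\phi=\rho|_L$ and $\psi(\xi,z'')=\sum_{a\le m}\rho_{z_a}(0,z'')\xi_a$; the vanishing of $d\tilde\rho$ forces $d\phi(x)=0$ and $\psi(\xi,x)=0$, so the lowest-order part of $\tilde\rho$ at $\tilde x$ is the quadratic form whose $|z_k|^2$-coefficient is the Levi form $\mathcal{L}_M(x)(w,\overline w)$ and which carries genuine cross terms $2\Re(z_k\,d\psi)$. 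This quadratic form is not the square of a single real linear form, so its zero locus is a proper quadric cone (or drops dimension) rather than a smooth hyperplane, whence $\tilde M$ genuinely fails to be a CR-hypersurface at $\tilde x$. Combining the three steps gives $\Sigma_{\tilde M}=\{(x,\Pi):x\in L',\ \Pi\subset H(x)\}$, as claimed.
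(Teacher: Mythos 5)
Your proof is correct and follows essentially the same route as the paper: both identify the singular locus with the critical set of the pulled-back defining function via the chain rule, observe that the image of $d_{\tilde x}\pi_L$ over $x\in L'$ is $\Pi$, and translate $d\tilde f_{\tilde x}=0$ into $\Pi\subset H(x)$. Your final step ruling out accidental smoothness via the second-order expansion (cross terms $2\Re(z_k\,d\psi)$ with $d_\xi\psi\neq 0$ forcing an indefinite quadratic part) is exactly the "closer analysis showing the singularity is conical" that the paper only asserts, so you have in fact supplied slightly more detail at the one point the paper leaves implicit.
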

 
 \begin{proof}
  \comm{
Let $M$ be the zero set of a real-analytic function $f:V\to\R$.
Then the pullback $\tilde{f}=\pi_L^*f$ defines $\tilde M=\{\tilde{x}\in\op{Bl}_LV:\tilde{f}(\tilde{x})=0\}$.
Conversely, if $\tilde M$ is the zero set of a real-analytic function $\tilde{f}$ on $\op{Bl}_LV$, 
then $\tilde{f}$ vanishes on $\pi_L^{-1}(L')$ and so it can be pushed forward to a continuous function $f$ 
on $V$. Using complexification and the Hartogs principle 
we can analytically extend $f$ from $V\setminus L'$ to $V$ (by zero); 
in other words, $L'$ is a removable singularity for $f$. 
Thus a defining function for $\tilde M$ is always the pullback $\tilde{f}=\pi_L^*f$ of the defining 
function $f:V\to\R$ for $M$, which can be taken non-singular: $d_xf\neq0$ for all $x\in M$.
 } 
Let $M$ be the zero set of a non-singular function $f:V\to\R$, i.e.\
$d_xf\neq0$ for all $x\in M$. A point $\tilde{x}$ is critical for $\tilde{f}=\pi_L^*f$ if $d_{\tilde{x}}\tilde{f}=0$. 
Since $H(x)\subset T_xM=\op{Ker}(d_xf)$, the map
$d_{\tilde{x}}\tilde{f}=d_xf\circ d_{\tilde{x}}\pi_L:T_{\tilde{x}}\op{Bl}_LV\to\R$
factorizes through $T_xV/H(x)\simeq\C$. 

Since $\pi_L$ is a diffeomorphism outside $L$, we can restrict to $x\in L'$. With such $x$ one readily 
verifies that the image of $d_{\tilde{x}}\pi_L:T_{\tilde{x}}\op{Bl}_LV\to T_xV$ at $\tilde{x}=(x,\Pi)$
coincides with $\Pi$, and so it belongs to the kernel of $d_xf$ if and only if $\Pi\subset H(x)$.

Thus $\tilde{x}$ is non-singular unless $T_xL=T_xL'\subset\Pi\subset H(x)$. A priori it could happen 
that $\tilde{M}$ possesses another defining function $\tilde{f}$ near such $\tilde{x}$ that is not a pullback $\pi_L^*f$, yet a closer analysis shows that the singularity at $\tilde{x}$ is conical and hence essential.
 \end{proof}

 \begin{corollary}\label{crr}
Let $m=\op{codim}(L,V)$ and $x\in L'=M\cap L$. Then 
the fiber over $x$ is $\pi_L^{-1}(x)\simeq\C^{m-1}=\C P^{m-1}\setminus \C P^{m-2}$ if $T_xL\subset H(x)$ 
and $\pi_L^{-1}(x)\simeq\C P^{m-1}$ else. \hfill$\Box$
 \end{corollary}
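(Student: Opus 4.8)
The plan is to obtain the fiber straight from Lemma \ref{LX} by a dimension count in the quotient $V/L$. First I would recall that over a point $x\in L'=M\cap L$ the ambient blow-up has fiber $\pi_L^{-1}(x)=\mathbb{P}(V/L)\simeq\C P^{m-1}$: its points are the hyperplanes $\Pi$ with $L\subset\Pi$, $\op{codim}(L,\Pi)=1$, equivalently the lines $\Pi/L$ in the $m$-dimensional space $V/L$. Since $x\in M$, this entire fiber lies in $\tilde M=\pi_L^{-1}(M)$, so the only content of the corollary is to delete the singular points of $\Sigma_{\tilde M}$ lying over $x$ and identify what remains.

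Next I would feed in Lemma \ref{LX}: a point $(x,\Pi)$ of the fiber lies in $\Sigma_{\tilde M}$ precisely when $\Pi\subset H(x)$. Because $T_xL=L\subset\Pi$ for every $\Pi$ in the fiber, the inclusion $\Pi\subset H(x)$ is possible only if $T_xL\subset H(x)$; this is exactly the dichotomy appearing in the statement. If $T_xL\not\subset H(x)$, then no $\Pi$ over $x$ is singular, $\Sigma_{\tilde M}$ misses the fiber, and it stays the full $\C P^{m-1}$. If $T_xL\subset H(x)$, the singular points over $x$ are $\{\Pi:\Pi/L\subset H(x)/L\}=\mathbb{P}(H(x)/L)$.

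It then remains to compute the size of this excised locus, which is the one place meriting care. Here $H(x)$ is a complex hyperplane in $T_xV\simeq\C^{n+1}$ (it has complex dimension $n=\op{CRdim}M$), and in the case at hand $L\subset H(x)$, so $\dim_\C H(x)/L=n-(n+1-m)=m-1$; hence $\mathbb{P}(H(x)/L)\simeq\C P^{m-2}$ is a projective hyperplane in $\mathbb{P}(V/L)\simeq\C P^{m-1}$. Deleting it leaves $\C P^{m-1}\setminus\C P^{m-2}\simeq\C^{m-1}$, as claimed. I expect no real obstacle beyond this bookkeeping: once Lemma \ref{LX} identifies $\Sigma_{\tilde M}$ over $x$ with the hyperplanes contained in $H(x)$, the corollary is a codimension computation, the only delicate point being to confirm that $H(x)/L$ is genuinely a hyperplane in $V/L$, so that the removed set is a $\C P^{m-2}$ and not of some other dimension.
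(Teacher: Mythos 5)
Your proposal is correct and is precisely the argument the paper intends: the corollary is stated with only a $\Box$ as an immediate consequence of Lemma \ref{LX}, and your derivation — identifying the singular points over $x$ with the hyperplanes $\Pi\supset L$ contained in $H(x)$, observing this set is empty unless $T_xL\subset H(x)$ and otherwise equals $\mathbb{P}(H(x)/L)\simeq\C P^{m-2}$ — is the expected bookkeeping, with the dimension count $\dim_\C H(x)/L=n-(n+1-m)=m-1$ carried out correctly.
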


Removing singularities from $\tilde{M}$ we obtain what we call CR-blowup of $M$ along $L$:
 $$
\op{Bl}_LM= \tilde{M}\setminus\Sigma_{\tilde{M}}
 $$
In particular, for $L=o\in M$ we obtain the CR-blowup of $M$ at the point $o$. 

 \begin{prop}
For real-analytic CR-hypersurfaces $M$ the CR-blowup construction is well-defined, i.e.\ a change of
the embedding $\iota$ results in a CR-equivalence of $\op{Bl}_LM$. 
Moreover, $\op{Bl}_LM$ is connected if $M$ is connected.
 \end{prop}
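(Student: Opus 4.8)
The plan is to establish the two assertions separately: well-definedness by combining the intrinsic uniqueness of the ambient realization with functoriality of the blow-up, and connectedness by a fibrewise inspection of the projection $\pi_L$ over the connected base $M$.

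For well-definedness the intrinsic data are the abstract CR-hypersurface $M$ together with the real-analytic submanifold $L'\subset M$; in any realization the center $L$ is recovered as the complexification of $L'$ (its germ along $M$ being determined by $L'$). Given two realizations $\iota_j:M\hookrightarrow V_j$, $j=1,2$, the first and principal step is to produce a biholomorphism $F$ of the germs of $V_1$ and $V_2$ along $M$ with $F\circ\iota_1=\iota_2$. The key is that restriction identifies the algebra of germs along $\iota_j(M)$ of functions holomorphic on $V_j$ with the algebra of germs of real-analytic CR-functions on $M$: it is injective because $\iota_j(M)$, being a real hypersurface, is generic (its real tangent space spans the complex tangent space of $V_j$), and surjective because real-analytic CR-functions on a real-analytic hypersurface extend holomorphically to a full neighborhood \cite{BER}. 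Hence the germ of $V_j$ along $M$, as a complex manifold, is intrinsic to the CR-structure of $M$; this furnishes $F$ locally, and since $\iota_j(M)$ is generic the map $F$ is uniquely determined by $F|_M=\iota_2\circ\iota_1^{-1}$, so the local pieces agree on overlaps and patch along the connected $M$. I expect this extension statement, hence the very existence of $F$, to be the technical heart of the proof.

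Once $F$ is available the rest is formal. As the complexification of $L'$ is intrinsic, $F$ carries $L_1$ onto $L_2$, so by functoriality of the blow-up with respect to biholomorphisms mapping centers to centers, $F$ lifts to a biholomorphism $\tilde F:\op{Bl}_{L_1}V_1\to\op{Bl}_{L_2}V_2$ covering it. Then $\tilde F$ maps the transforms $\tilde M_j=\pi_{L_j}^{-1}(M)$ onto one another, and since by Lemma \ref{LX} the singular locus is described purely through $L'$ and the CR-planes $H(x)$, both preserved by $F$, it maps $\Sigma_{\tilde M_1}$ onto $\Sigma_{\tilde M_2}$ as well. Therefore $\tilde F$ restricts to a biholomorphism of the blown-up ambient manifolds taking $\op{Bl}_{L_1}M$ onto $\op{Bl}_{L_2}M$, which is precisely a CR-equivalence of the two realizations.

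For connectedness I would argue fibrewise over $M$. Outside $L'$ the projection is a diffeomorphism, so $A:=\pi_L^{-1}(M\setminus L')\cong M\setminus L'$. Because $m=\op{codim}(L,V)\ge2$ gives $\dim_\R L'\le\dim_\R L=2(n+1-m)\le2n-2$, the set $L'$ has real codimension at least $2$ in $M$, so removing the subanalytic set $L'$ from the connected $M$ keeps $A$ connected. Over $x\in L'$ the fiber of $\op{Bl}_LM$ is, by Corollary \ref{crr}, either $\C P^{m-1}$ or $\C^{m-1}$, both connected for $m\ge2$; call it $E_x$. It remains to glue the fibers to $A$: approaching $x$ inside $M\setminus L'$ along a tangent direction $v\in T_xM\setminus H(x)$ with $\langle v\rangle\not\subset L$ (possible since these are open dense conditions on $v\in T_xM$) yields secant lines whose limit $\Pi$ satisfies $\Pi\not\subset H(x)$, hence $(x,\Pi)\in E_x\cap\overline A$ by Lemma \ref{LX}. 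A connected set meeting the closure of a connected set lies in the same component, so each $E_x$ lies in the component of $A$, and $\op{Bl}_LM=A\cup\bigcup_{x\in L'}E_x$ is connected. The only delicate point here is to confirm that the limiting-secant (normal cone) directions are not all absorbed into $\Sigma_{\tilde M}$, which the transversality choice of $v$ secures.
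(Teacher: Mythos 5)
Your proof is correct and takes essentially the same route as the paper: for well-definedness the paper simply cites Andreotti--Fredricks (Theorem 1.12 of [AF]) for uniqueness of the hypersurface realization up to an ambient biholomorphism of germs --- the CR-extension argument you sketch is precisely the content of that citation --- and then invokes functoriality of blow-ups, while connectedness is attributed to Corollary \ref{crr}. Your fibrewise gluing via limiting secants and the codimension count for $M\setminus L'$ supply details the paper leaves implicit, but they implement the intended argument rather than a different one.
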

 
 \comm{
Let $\iota:M\hookrightarrow V$ be a real hypersurface in a complex manifold of dimension $n+1$
and $\pi_L:\op{Bl}_LV\to V$ a blow-up along a complex submanifold $L$ meeting $\iota(M)$. Define
 $$
\tilde{M}=\pi_L^{-1}(M)=(M\setminus L\cap M)\cup\pi_L^{-1}(M\cap L)\subset\op{Bl}_LV.
 $$
This subset has singular points $\Sigma_{\tilde{M}}\subset\pi_L^{-1}(M\cap L)$. For our purposes it is enough
to assume $V=\C^{n+1}$ and $L\subset V$ a subspace, so let us describe singularities in the affine version.
These correspond to the points $(x,\Pi)$ with $x\in L\cap M$ and $\Pi\subset H(x)$, where $H(x)$
is the CR-plane at the point $x$. Removing singularities we obtain
 $$
\op{Bl}_LM= \tilde{M}\setminus\Sigma_{\tilde{M}}
 $$
that we call CR-blowup of $M$ along $L$. In particular, for $L=o\in M$ we obtain the CR-blowup of $M$
at the point $o$. $\op{Bl}_LM$ is connected if $M$ is connected.

In general, $L$ does not belong to $M$ and the germ of $L$ along $M\subset V$ is uniquely determined by
$L'=L\cap M$. We continue however writing $\op{Bl}_LM$ (and not $\op{Bl}_{L'}M$) using an embedding $\iota$.

Note also that by the above description of singularities for $x\in L$ with $m=\op{codim}(L,V)>1$
the fiber $\pi_L^{-1}(x)$ is either $\C P^{m-1}$ or $\C^{m-1}=\C P^{m-1}\setminus \C P^{m-2}$.
Hence, $\pi_L^{-1}(L')$ is connected if $L'=L\cap M$ is connected.
 }

% By \cite[Theorem 1.12]{AF}, the CR-hypersurface $M$ admits a closed real-analytic CR-embedding as a hypersurface in a
% complex manifold ${\mathcal M}$, and it is not hard to show (see, e.g., \cite[Proposition 12.4.22]{BER}) that every
% real-analytic infinitesimal CR-automorphism defined on an open subset $U\subset M$ is the real part of a holomorphic
% vector field defined on an open subset ${\mathcal U}\subset {\mathcal M}$ with $U\subset{\mathcal M}\cap{\mathcal U}$.

 \begin{proof}
Note at first that the construction is defined because every real-analytic CR-surface admits a
closed real-analytic CR-embedding as a hypersurface to a complex manifold $V$ \cite{AF}.
Next, by Theorem 1.12 of loc.cit.\
such an embedding is unique up to a biholomorphism of (the germ of) a neighborhood of $\iota(M)\subset V$.
Since biholomorphisms naturally induce maps of blow-ups the first claim follows.

The second claim of the proposition follows from Corollary \ref{crr}.
 \end{proof}

 \begin{examp}\label{Ex1}\rm
Let us blow-up the hyperquadric ${\mathcal Q}=\{\op{Im}(w)=\|z\|^2\}\subset\C^n(z)\times\C(w)$
at the point $o=(0,0)$, where $\|z\|^2=\sum_{j=1}^n\sigma_j|z_j|^2$, $\sigma_j=\pm1$, $z=(z_1,\dots,z_n)$.
The blow-up contains the following open dense subset
 $$
\op{Bl}_o{\mathcal Q}\supset M=\{\op{Im}(w)=|w|^2\cdot\|z\|^2\}\stackrel{\pi_o}\longrightarrow{\mathcal Q}
 $$
with $\pi_o(z,w)=(w\cdot z,w)$. The model $M$ for $n=1$ appeared in \cite{KS1}.

The whole blow-up is obtained from
 $$
\Bigl\{\bigl((z_1,\dots,z_n,w),[\zeta_1:\dots:\zeta_n:\varpi]\bigr)\in\C^{n+1}\times\C P^n\, , \,
\op{Im}(w)=\|z\|^2,\ \frac{z_1}{\zeta_1}=\dots=\frac{z_n}{\zeta_n}=\frac{w}{\varpi}\Bigr\}
 $$
by removing singularities. In the chart $\varpi\neq0$ we get $U_0=M$ as above.
For $1\leq k\leq n$ in the chart $\zeta_k\neq0$ we get
 $$
U_k=\bigl\{\op{Im}(z_kw)=\sum_{j\neq k}\sigma_j|z_jz_k|^2+\sigma_k|z_k|^2\bigr\}.
 $$
The singularities of $U_k$ are $\Sigma_k=\{z_k=0,w=0\}$, so $U_k'=U_k\setminus\Sigma_k$ is the nonsingular part.
The projections $\pi_o^k:U'_k\to{\mathcal Q}$ and the gluing maps $\varphi_k:U'_k\setminus\{w=0\}\to U_0$ 
are given by the formulae:
 \begin{gather*}
\pi_o^k(z_1,\dots,z_n,w)=(z_1z_k,\dots,z_{k-1}z_k,z_k,z_kz_{k+1},\dots,z_kz_n,z_kw),\\
\varphi_k(z_1,\dots,z_n,w)=
\Bigl(\frac{z_1}w,\dots,\frac{z_{k-1}}w,\frac1w,\frac{z_{k+1}}w,\dots,\frac{z_n}w,z_kw\Bigr).
 \end{gather*}
Thus $\op{Bl}_o{\mathcal Q}$ %$=U_0\cup_{k=1}^n\cup_{\varphi_k}U'_k$ 
is obtained from the union of $U_0,U_1,\dots,U_n$ by gluing via $\varphi_1,\dots,\varphi_n$.
Since $\cup_{k=1}^n\pi_o^k(U_k')\cap\{w=0\}$ (this is empty only for the sign definite norm $\|z\|^2$)
is the null-cone $\{\|z\|^2=0,z\neq0,w=0\}={\mathcal Q}\setminus\pi_o(M)$, we obtain 
$$\op{Bl}_o{\mathcal Q}={\mathcal Q}\cup M$$
In what follows we often change the blow-up $\op{Bl}_o{\mathcal Q}$ to $M$.
 \end{examp}

 \begin{examp}\label{Ex2}\rm
More generally, let $\C^n(z)=\C^{n-k}(z')\times\C^{k}(z'')$ be the direct product
and let $\|z\|^2=\|z'\|^2+\|z''\|^2$ be the quadric of signature $(\bar{p},\bar{q})$,
where both quadrics $\|z'\|^2$ and $\|z''\|^2$ are nondegenerate of signatures $(p',q')$ and $(p'',q'')$ with
$p'+p''=\bar{p}$, $q'+q''=\bar{q}$. Let $L=\C^{k}(z'')$. The corresponding blow-up contains the following model
 $$
\op{Bl}_L{\mathcal Q}\supset M=\{\op{Im}(w)=|w|^2\cdot\|z'\|^2+\|z''\|^2\}\stackrel{\pi_L}\longrightarrow{\mathcal Q}
 $$
with $\pi_L(z',z'',w)=(w\cdot z',z'',w)$.
 \end{examp}

\subsection{Symmetry of a Blow-up}\label{S22}

Next we describe how symmetry algebra of $M$ changes upon a blow-up construction.
Recall that the Levi-degeneracy locus in $M$ is an analytic subset.
% Recall that if $M$ contains Levi-nondegenerate points, then they complement an analytic subset.
 % (given by nontrivial analytic equations and inequalities)

 \begin{theorem}\label{symBlUp}
Let $M$ be a connected real analytic CR-hypersurface having Levi non\-degenerate points. 
If $L'\neq L$ assume that either each component of $L'$ contains a Levi-nondegenerate point
or that the Levi-degeneracy locus in $M$ has $\op{codim}>1$.
 
Then the symmetry algebra of the blow-up $\mathfrak{s}(\op{Bl}_LM)$ is the subalgebra in the Lie algebra 
$\mathfrak{s}(M)$ consisting of symmetries preserving $L'$, i.e.\ tangent to $L$ along $M$.
The same is true for the germs of symmetries, i.e.\ $\hol(\op{Bl}_LM,\tilde{x})\subset\hol(M,\pi_L(\tilde{x}))$
is determined by the condition to preserve $L'$ in a neighborhood of $\pi_L(\tilde{x})$.
 \end{theorem}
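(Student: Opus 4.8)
The plan is to establish a bijective correspondence between symmetries of $\op{Bl}_LM$ and those symmetries of $M$ that preserve $L'$, in both directions. The key structural fact, already available from the construction in Section \ref{S21}, is that $\pi_L:\op{Bl}_LM\to M$ is a CR-diffeomorphism away from the exceptional locus: over $M\setminus L'$ the projection is a biholomorphism, so $\op{Bl}_LM$ and $M$ share an open dense CR-isomorphic subset. Since we are in the real-analytic category, a CR-vector field is determined by its restriction to any open set (by analytic continuation), and therefore every symmetry of $\op{Bl}_LM$ restricts to a symmetry of the open dense set $M\setminus L'$, which then extends uniquely to a germ/global symmetry of $M$. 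The first step is thus to show this restriction map $\mathfrak{s}(\op{Bl}_LM)\to\mathfrak{s}(M)$ is well-defined and injective, which follows from density plus real-analyticity.

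The second step is to pin down the image, namely to show that the symmetries of $M$ arising this way are exactly those tangent to $L$ along $M$ (equivalently, preserving $L'=L\cap M$). For the forward inclusion: a vector field $X\in\mathfrak{s}(\op{Bl}_LM)$ pushes forward under $\pi_L$ to a symmetry of $M$ off $L'$; but because $X$ must be tangent to the whole manifold $\op{Bl}_LM$ including points lying over $L'$, and because the blow-down $\pi_L$ collapses the fibers $\pi_L^{-1}(x)\cong\C P^{m-1}$ (or $\C^{m-1}$) lying over $x\in L'$, the pushed-forward field $\pi_{L*}X$ cannot move points off $L'$ in a way incompatible with $L'$ being preserved — so $\pi_{L*}X$ is tangent to $L$ along $M$. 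Conversely, if $Y\in\mathfrak{s}(M)$ is tangent to $L$ along $M$, then the biholomorphism $\pi_L$ over $M\setminus L'$ transports $Y$ to a CR-field on $\op{Bl}_LM\setminus\pi_L^{-1}(L')$, and tangency to $L$ is precisely the condition ensuring that the flow of $Y$ lifts to the blow-up (this is the standard functoriality of blow-ups with respect to biholomorphisms tangent to the center, applied in Section \ref{S21}); real-analyticity then extends the lifted field across the exceptional set to a genuine element of $\mathfrak{s}(\op{Bl}_LM)$.

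**The hard part will be** the extension-across-singularities step and the precise role of the hypotheses on the Levi-degeneracy locus. The subtlety is that $\op{Bl}_LM$ is obtained from $\tilde M=\pi_L^{-1}(M)$ by deleting the singular set $\Sigma_{\tilde M}$ (Lemma \ref{LX}), so one must verify that a lifted field, a priori defined only off the exceptional fiber, genuinely extends to a nonsingular CR-field on the smooth manifold $\op{Bl}_LM$, and does not blow up along $\pi_L^{-1}(L')\setminus\Sigma_{\tilde M}$. Here the Levi-nondegeneracy assumption enters: the condition that each component of $L'$ contains a Levi-nondegenerate point (or that the degeneracy locus has codimension $>1$) is what guarantees that a symmetry preserving $L'$ acts with enough rigidity near $L'$ for the lift to remain bounded and holomorphic there — degeneracy could otherwise allow pathological tangential behavior that obstructs the extension. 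I would handle this by choosing local coordinates straightening $L$ as in the affine model of Lemma \ref{LX}, writing the candidate lift in the blow-up charts $U_k$ of Example \ref{Ex1}, and checking directly that tangency to $L$ kills the potentially singular terms; the Levi-nondegeneracy at a point of each component then propagates the conclusion to the whole component by analyticity.

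Finally, the claim for germs $\hol(\op{Bl}_LM,\tilde x)\subset\hol(M,\pi_L(\tilde x))$ follows by the identical argument localized near $\tilde x$: a germ of symmetry at $\tilde x$ restricts to a germ on the dense CR-isomorphic set, extends to a germ at $\pi_L(\tilde x)$ on $M$, and the image is characterized by the condition of preserving $L'$ in a neighborhood of $\pi_L(\tilde x)$, with the tangency-versus-lifting equivalence established exactly as above. The only change is that all statements are read as germs, so no global connectedness or completeness of flows is invoked; the real-analytic uniqueness of continuation does all the work of matching germs on the two sides of $\pi_L$.
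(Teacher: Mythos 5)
Your overall skeleton --- inject $\mathfrak{s}(\op{Bl}_LM)$ into $\mathfrak{s}(M)$ via the biholomorphism off the exceptional set, then identify the image with the stabilizer of $L'$ --- matches the paper's. But the crux of the forward inclusion is left unproved, and you have misplaced where the Levi hypotheses act. The paper's key observation, absent from your proposal, is that $\op{Bl}_LM$ is \emph{Levi-degenerate along the entire exceptional set} $\pi_L^{-1}(L')$: since ${\mathcal L}_{\op{Bl}_LM}(\tilde x)=i\p\bar\p f\circ d\pi_L|_{H(\tilde x)}$ and the image of $d_{\tilde x}\pi_L$ at $\tilde x=(x,\Pi)$ over $x\in L'$ is just $\Pi$, the Levi form degenerates there. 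As $\pi_L^{-1}(L')$ has real dimension $2n$ (if $L'=L$) or $2n-1$ (if $L'\subsetneq L$), a symmetry of the blow-up not tangent to it would sweep out an open set of Levi-degenerate points; the hypotheses of the theorem (each component of $L'$ meets the nondegeneracy locus, or the degeneracy locus has $\op{codim}>1$) are precisely what forbids this. That is how the paper forces every symmetry of $\op{Bl}_LM$ to be tangent to the exceptional set, whence the descended field preserves $L'$. Your substitute --- ``the blow-down collapses the fibers, so the pushed-forward field cannot move points off $L'$'' --- is asserted rather than argued. It can be made rigorous (the flows of $\pi_L$-related fields intertwine with $\pi_L$, and a diffeomorphism cannot map a positive-dimensional fiber into the single point $\pi_L^{-1}(y)$ for $y\notin L'$), which would give a genuinely different route bypassing the Levi computation; but you neither state this argument nor the prerequisite that the pushforward extends across $L'$ at all, which is the Hartogs/Riemann removable-singularity step using $\op{codim}_{\C}L>1$, not a consequence of ``density plus real-analyticity'' as you claim.

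The second problem is your diagnosis of the hard part. You place the difficulty, and the role of the Levi-nondegeneracy assumptions, in the converse direction: extending the lift of a field tangent to $L$ across the exceptional divisor, with the Levi condition supposedly ``guaranteeing the lift remains bounded and holomorphic.'' That is not where the difficulty lies and the Levi form plays no role there: a holomorphic vector field tangent to $L$ lifts to $\op{Bl}_LV$ by pure functoriality of the blow-up, then restricts to $\tilde M$ and to its nonsingular part --- this is the easy direction in the paper. A write-up organized around your plan would expend its effort on a non-issue while leaving the actual crux (tangency of blow-up symmetries to $\pi_L^{-1}(L')$, where all the hypotheses are consumed) without justification.
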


 \begin{proof}
By \cite[Theorem 1.12]{AF} and \cite[Proposition 12.4.22]{BER},
the infinitesimal symmetries of $M$ are bijective with holomorphic vector fields on the germ
of $M$ in $V$ that along $M$ are tangent to $H(M)$, the holomorphic tangent bundle of $M$.
In other words, every real-analytic infinitesimal CR-automorphism defined on an open subset $U'\subset M$
is the real part of a holomorphic vector field defined on an open subset $U\subset V$ with
$\iota(U')\subset\iota(M)\cap U$. The condition in the theorem is easily verified to be independent of 
the choice of CR-embedding (realization) $\iota:M\to V$.

Now we claim that $\op{Bl}_LM$ is Levi-degenerate along $\pi^{-1}_L(L')$.
Recall that the Levi form ${\mathcal L}_M$ of $M$ at $x$ can be identified with $i\p\bar\p f|_{H(x)}$, where 
$f$ is the defining function of $M$ in $V$. Similarly, the Levi form of the blow-up at $\tilde{x}$ is
$i\p\bar\p\tilde{f}|_{H(\tilde{x})}$, where $\tilde{f}=f\circ\pi_L$. Since $\pi_L$ is holomorphic we get
${\mathcal L}_{\op{Bl}_LM}(\tilde{x})=i\p\bar\p f\circ d\pi_L|_{H(\tilde{x})}$.
We already noted in the proof of Lemma \ref{LX} that the image of $d_{\tilde{x}}\pi_L$ at 
$\tilde{x}=(x,\Pi)$ with $x=\pi_L(\tilde{x})\in L'$ is $\Pi\subset T_xM$. 
Thus in the case when $m=\op{codim}(L)>2$ the rank of the Levi form is at most $\dim\Pi<2n$ 
and so ${\mathcal L}_{\op{Bl}_LM}(\tilde{x})$ is degenerate. For $m=2$ 
 % (we do not consider $m=1$ in which case the blow-up is trivial)
the same argument works if $T_xL\not\subset T_xM$ because then $\Pi\not\subset H(x)$ 
and the rank does not exceed $\dim(H(x)\cap\Pi)<2n$. 

In the case $T_xL\subset T_xM$ the points $\tilde{x}=(x,\Pi)$, where $\Pi\subset H(x)$, correspond to 
the singularity stratum $\Sigma_{\tilde{M}}$ that is removed, and the argument applies as well. Alternatively, 
we note that when $L\subset M$ the blow-up contains the complex hypersurface 
$\pi_L^{-1}(L)\simeq \C^{m-1}\times L$ and so $\op{Bl}_LM$ is not minimal along it.
Our further arguments can be applied component-wise, so we can assume $L'$ (and $L$) to be connected.

If $L'=L$ then $\dim\pi^{-1}_L(L)=2n$. If a symmetry is not tangent to this complex submanifold, then
a flow along it generates an open subset of Levi-degenerate points, which is impossible because
Levi-nondegenerate points are dense in $M$. 
If $L'\subsetneq L$ then $\dim\pi^{-1}_L(L')=2n-1$. Any symmetry must be therefore tangent to 
$\pi^{-1}_L(L')$ if Levi degeneracy locus has $\op{codim}>1$. Alternatively, if
$L'$ contains a Levi-nondegenerate point $x$, then any point from a small neighborhood $U_x\subset M$ 
of $x$ is Levi-nondegenerate. Thus a symmetry must be tangent to $\pi_L^{-1}(L'\cap U_x)$
and hence, by analyticity of vector fields from $\mathfrak{s}(M)$, this symmetry is tangent 
to $\pi_L^{-1}(L')$ everywhere.

We conclude that in any case the symmetries of $\op{Bl}_LM$ must be tangent to $\pi^{-1}_L(L')$.
Thus they descend to the blow-down manifold $M$. 
Indeed, consider a symmetry $s\in\mathfrak{s}(\op{Bl}_LM)$
restricted to $\op{Bl}_LM\setminus\pi^{-1}_L(L')\simeq M\setminus L'$. Choose a neighborhood 
$U\subset V$ of $x\in L'$ and adapted complex coordinates to the submanifold $L\subset V$. 
In these coordinates the components of $s$ are holomorphic functions that analytically extend to $L$
by the Hartogs principle applied to $U$ (it is important here that $m=\op{codim}L>1$). 
In other words, $L'$ is a removable singularity for the symmetry $s$ on $M$. 

Therefore we get a map $q_L:\mathfrak{s}(\op{Bl}_L(M))\to\mathfrak{s}(M)$, which
is clearly a homomorphism of Lie algebras.
Since $\pi_L:\op{Bl}_LM\to M$ is a biholomorphism over $M\setminus L'$ the map $q_L$ is injective.
Indeed, $q_L(\tilde{s})=0$ for $\tilde{s}\in\mathfrak{s}(\op{Bl}_LM)$ implies $\tilde{s}|_U=0$ for
$U\subset M\setminus L'$ and therefore $\tilde{s}=0$ by analyticity and connectedness of $\op{Bl}_LM$.

It is clear that the vector fields $s\in\mathfrak{s}(M)$ that lift to $\op{Bl}_LM$ must 
preserve $L'$. Conversely, if $s$ preserves $L'$ it lifts to the blow up of $V$ along $L$. Since $s$ is also 
a symmetry of $M$, it restrict to $\tilde{M}$ and then to the non-singular part $\op{Bl}_LM$. Thus $q_L$ 
has the required image: $q_L(\mathfrak{s}(\op{Bl}_L(M)))=\{s\in\mathfrak{s}(M):s(x)\in T_xL'\,\forall x\in L'\}$.

The proof in the case of germs of symmetries is completely analogous.
 \end{proof}
 
 \begin{remark}\label{referee} \rm
The Levi-nondegeneracy assumptions in Theorem \ref{symBlUp} can be relaxed to 
$k$-nondegeneracy for $k>1$, as was kindly communicated to us by a reviewer:
A combination of \cite[Theorem 1.1,Theorem 1.4]{ES} implies that every point
$\tilde{x}$ of $\pi^{-1}_L(L')$ is not of finite type in the sense of Kohn and Bloom-Graham, so 
not finitely-nondegenerate by \cite[Remark 11.5.14]{BER}. 
On holomorphically nondegenerate $M$ every point in the complement to a proper analytic set
is finitely-nondegenerate \cite[Theorem 11.5.1]{BER}.
 % This implies that the symmetry should descend to $M$. 
% For our purposes however the statement of Theorem \ref{symBlUp} will suffice.
 \end{remark}

 \begin{examp}\label{Ex3}\rm
The symmetry algebra of the hyperquadric ${\mathcal Q}$ is $\mathfrak{su}(p,q)$,
where $(\bar{p},\bar{q})=(p-1,q-1)$ is the signature of the Levi form. The isotropy algebra of a point is
the first parabolic subalgebra $\mathfrak{p}_{1,n+1}$, and hence this is the symmetry
of the blow-up model $\op{Bl}_o{\mathcal Q}$ constructed in Example \ref{Ex1}.
In Section \ref{S42} we will give explicit formulae for the symmetry fields of the open dense submanifold
$M\subset \op{Bl}_o{\mathcal Q}$.
 \end{examp}

This example implies the following statement.

 \begin{corollary}\label{realizationP}
The first parabolic subalgebra $\mathfrak{p}_{1,n+1}\subset\mathfrak{su}(p,q)$ is realizable as symmetry
of an analytic CR-hypersurface of\, $\op{CRdim}=n$ containing Levi nondegenerate points.
As such one can take either the constructed blow-up or its submanifold $M\subset \op{Bl}_o{\mathcal Q}$.
 \end{corollary}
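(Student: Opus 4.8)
The plan is to derive the corollary as an immediate application of Theorem \ref{symBlUp}, with the only geometric input being the classical identification of the point-isotropy of the hyperquadric with a parabolic subalgebra. First I would take $M=\mathcal{Q}$ and $L=\{o\}$ in the blow-up construction of Examples \ref{Ex1} and \ref{Ex3}, and verify the hypotheses of Theorem \ref{symBlUp}. The hyperquadric $\mathcal{Q}$ of Levi signature $(\bar p,\bar q)$ is connected, real-analytic and Levi-nondegenerate at \emph{every} point, so in particular it possesses Levi-nondegenerate points. Since $o\in\mathcal{Q}$ we have $L'=L\cap M=\{o\}=L$, so the branch of the hypothesis concerning $L'\neq L$ is vacuous; and $m=\op{codim}(\{o\},\C^{n+1})=n+1>1$ for every $n\ge1$, as required by the construction.

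With these checks in place, Theorem \ref{symBlUp} identifies $\mathfrak{s}(\op{Bl}_o\mathcal{Q})$ with the subalgebra of $\mathfrak{s}(\mathcal{Q})=\mathfrak{su}(p,q)$ (here $p+q=n+2$) consisting of the symmetries tangent to $L=\{o\}$ along $M$. The tangency condition from the theorem reads $s(o)\in T_o L'=T_o\{o\}=0$, which for a single point simply means $s(o)=0$, i.e.\ that $o$ is a stationary point of the flow of $s$. Hence the image subalgebra is precisely the isotropy (stabilizer) subalgebra $\mathfrak{s}_o(\mathcal{Q})$, and the analogous assertion for germs in Theorem \ref{symBlUp} gives $\hol(\op{Bl}_o\mathcal{Q},\tilde{o})$ as the germ-isotropy at $o$. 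It then remains to recognize $\mathfrak{s}_o(\mathcal{Q})$: by the classical Chern--Moser and Tanaka theory \cite{C,CM,Ta1,Ta2}, $\mathcal{Q}$ is the flat model of nondegenerate CR geometry as a parabolic geometry, and the stabilizer of a point is exactly the first maximal parabolic $\mathfrak{p}_{1,n+1}\subset\mathfrak{su}(p,q)$ --- the stabilizer of the isotropic line over $o$ in the projectivized null cone --- as already recorded in Example \ref{Ex3}. This delivers the realization on the blow-up itself.

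For the open dense submanifold $M\subset\op{Bl}_o\mathcal{Q}$ of Example \ref{Ex1} I would argue that its symmetries, being real-analytic vector fields on a dense open subset, extend uniquely by analyticity across $\pi_o^{-1}(L')$ to symmetries of $\op{Bl}_o\mathcal{Q}$ and conversely restrict to it, whence $\mathfrak{s}(M)\cong\mathfrak{s}(\op{Bl}_o\mathcal{Q})=\mathfrak{p}_{1,n+1}$; alternatively one applies Theorem \ref{symBlUp} to $M$ directly. Both $\op{Bl}_o\mathcal{Q}$ and $M$ have $\op{CRdim}=n$ and contain Levi-nondegenerate points, since away from the exceptional fiber $\pi_o^{-1}(o)$ the projection $\pi_o$ is a CR-diffeomorphism onto $\mathcal{Q}\setminus\{o\}$, which is Levi-nondegenerate. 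The only non-formal ingredient, and the step I would treat most carefully, is the identification of the point-isotropy with $\mathfrak{p}_{1,n+1}$; but this is classical and is precisely the content imported from Example \ref{Ex3}, so no genuine obstacle remains --- the rest is the bookkeeping of confirming that tangency to a point is fixing the point and that passing between $\op{Bl}_o\mathcal{Q}$ and its dense submanifold $M$ leaves the symmetry algebra unchanged.
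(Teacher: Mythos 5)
Your treatment of the full blow-up $\op{Bl}_o\mathcal{Q}$ is correct and is exactly the paper's route: verify the hypotheses of Theorem \ref{symBlUp} (with $L'=L=\{o\}$ the extra hypothesis is vacuous), observe that tangency to a zero-dimensional $L'$ means vanishing at $o$, and identify the isotropy subalgebra of $\mathfrak{su}(p,q)$ at a point of $\mathcal{Q}$ with $\mathfrak{p}_{1,n+1}$ as in Example \ref{Ex3}.

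The gap is in your argument for the submanifold $M\subset\op{Bl}_o\mathcal{Q}$. You assert that the symmetries of $M$, ``being real-analytic vector fields on a dense open subset, extend uniquely by analyticity'' to $\op{Bl}_o\mathcal{Q}$. Uniqueness of an analytic extension is automatic, but its \emph{existence} is not: restriction gives an injection $\mathfrak{s}(\op{Bl}_o\mathcal{Q})\to\mathfrak{s}(M)$, and the whole point to be proved --- which the paper flags explicitly as showing that ``the symmetry does not grow upon restriction to the submanifold $M$'' --- is that this injection is onto. In general the symmetry algebra of an open subset of a CR-manifold can be strictly larger than that of the ambient manifold. Note also that the complement of $M$ in $\op{Bl}_o\mathcal{Q}$ is not merely a piece of the exceptional fiber: $\pi_o(M)$ is $\mathcal{Q}$ with the punctured null cone $\{w=0\}\setminus\{o\}$ removed, so $M$ misses the proper transform of the null cone as well. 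The paper closes the argument differently: it runs the descent step from the proof of Theorem \ref{symBlUp} on $M$ itself, obtaining an injective map $q_L:\mathfrak{s}(M)\to\mathfrak{s}(\pi_o(M))$ whose image consists of the fields vanishing at $o$, and then uses the fact that $\mathfrak{s}(\pi_o(M))=\mathfrak{s}(\mathcal{Q})=\mathfrak{su}(p,q)$ (an open connected subset of the spherical quadric has the full symmetry algebra), whence $\mathfrak{s}(M)\cong\mathfrak{p}_{1,n+1}$. Your parenthetical alternative --- apply Theorem \ref{symBlUp} to $M$ directly, viewing it as a blow-up of $\pi_o(M)$ at $o$ --- is the right idea, but it still requires the nontrivial input $\mathfrak{s}(\pi_o(M))=\mathfrak{su}(p,q)$, which you do not supply.
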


 \begin{proof}
That the symmetry of $\op{Bl}_o{\mathcal Q}$ is as indicated follows from Theorem \ref{symBlUp}.
Let us also show that the symmetry does not grow upon restriction to the submanifold $M$.
The subset $\pi_o(M)\subset{\mathcal Q}$ is obtained from the quadric by removing the hyperplane $\{w=0\}$
punctured at $o$. The symmetry algebra of both $\pi_o(M)$ and ${\mathcal Q}$ is $\mathfrak{su}(p,q)$.
Now the same argument as in the above proof shows that $q_L:\mathfrak{s}(M)\to\mathfrak{s}(\pi_o(M))$
is an injective map with the image consisting of symmetry fields vanishing at $o$.
 \end{proof}

 \begin{examp}\label{Ex4}\rm
Considering the more general blow-up model from Example \ref{Ex2} with $k=\dim_\C L\in(0,n)$
we conclude that its symmetry is smaller in dimension than the parabolic subalgebra fixing a subspace of dimension
$(k+1)$ in linear representation.
For instance, for $\bar{p}=\bar{q}=1$ the symmetry algebra $\mathfrak{s}(\op{Bl}_L{\mathcal Q})$
has dimension 8, while the corresponding parabolic algebra $\mathfrak{p}_2\subset\mathfrak{su}(2,2)$
has dimension 11. This is in accordance with Theorem \ref{symBlUp},
if one verifies the action of $\mathfrak{s}({\mathcal Q})$ on $L$.
 \end{examp}

\subsection{Automorphisms of a Blow-up}\label{S23}

The argument of the previous theorem extends to the Lie group case and we get:

 \begin{theorem}\label{symBlUp2}
Let $M$ be a real analytic CR-hypersurface having Levi-nondegenerate points.
Let $G$ be the CR-automorphism group of $M$, and $\tilde{G}$ be the CR-automorphism group 
of $\op{Bl}_L(M)$. Let subgroups $G_0$, $\tilde{G}_0$ be their components of unity.

If $L'$ satisfies the assumption of Theorem \ref{symBlUp}, then
$\tilde{G}_0$ is the stabilizer of (each component of) $L'$ in $G_0$.
If, in addition, $M$ is minimal in the case $L'=L$ or, alternatively,
the Levi degeneracy locus of $M$ has $\op{codim}>2$, then
$\tilde{G}$ is the stabilizer of $L'$ in $G$. \hfill$\Box$
 \end{theorem}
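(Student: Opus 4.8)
The plan is to lift the Lie-algebra argument of Theorem \ref{symBlUp} to the group level by building a descent homomorphism $q_L:\tilde G\to G$ and identifying its image with the stabilizer of $L'$. First I would show that every $\tilde\phi\in\tilde G$ preserves the exceptional set $E:=\pi_L^{-1}(L')$ as a subset of $\op{Bl}_LM$. By the Levi-form computation in the proof of Theorem \ref{symBlUp}, every point of $E$ is Levi-degenerate, while $\op{Bl}_LM\setminus E\simeq M\setminus L'$ is Levi-nondegenerate on a dense set; since a CR-automorphism carries Levi-nondegenerate points to Levi-nondegenerate points, $\tilde\phi(E)=E$. Hence $\phi:=\pi_L\circ\tilde\phi\circ\pi_L^{-1}$ is a well-defined CR-automorphism of the dense open set $M\setminus L'$, and, writing it in coordinates adapted to $L\subset V$ and invoking the Hartogs principle (here $\op{codim}L>1$ is used, exactly as for the vector fields in Theorem \ref{symBlUp}), $\phi$ extends across $L'$ to an element of $G$ stabilizing $L'$. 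This defines a homomorphism $q_L:\tilde G\to G$ landing in the stabilizer of $L'$. It is injective because $\pi_L$ is a biholomorphism over the dense connected set $M\setminus L'$, so $q_L(\tilde\phi)=\op{id}$ forces $\tilde\phi=\op{id}$ by analyticity and connectedness of $\op{Bl}_LM$. Conversely, any $\phi\in G$ preserving $L'$ preserves the germ of $L$ along $M$, hence lifts by functoriality of the blow-up to an automorphism of $\op{Bl}_LV$ that preserves $\tilde M$ and its singular stratum $\Sigma_{\tilde M}$, and so restricts to an element of $\tilde G$ mapping to $\phi$. Thus $q_L$ identifies $\tilde G$ with the stabilizer of $L'$ in $G$, \emph{provided} one knows that all of $\tilde G$ preserves $E$.

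For the identity-component assertion no additional hypothesis is needed and the argument is infinitesimal. Restricting $q_L$ to $\tilde G_0$ gives an injective homomorphism into $G_0$; by Theorem \ref{symBlUp} the Lie algebra of its image is $\{s\in\mathfrak s(M):s(x)\in T_xL'\ \forall x\in L'\}$, which is precisely the Lie algebra of the stabilizer of $L'$ in $G_0$. Two connected Lie subgroups of $G_0$ with the same Lie algebra coincide, so $q_L(\tilde G_0)$ is the identity component of that stabilizer. Moreover, since $\tilde G_0$ is connected its action permutes the finitely many components of $E$ continuously, hence trivially, which is why one may speak of the stabilizer of \emph{each} component of $L'$; this yields the first assertion.

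The delicate point, and the reason for the extra hypothesis in the second assertion, is that the descent step must hold for automorphisms outside the identity component, which requires $E=\pi_L^{-1}(L')$ to be an intrinsically distinguished subset of $\op{Bl}_LM$ that every $\tilde\phi\in\tilde G$ must preserve. When $L'\subsetneq L$ and the Levi-degeneracy locus of $M$ has $\op{codim}>2$, the set $E$ is the union of the codimension-one components of the Levi-degeneracy locus of $\op{Bl}_LM$ and is thus canonically defined; when $L'=L$, the complex hypersurface $\pi_L^{-1}(L)$ is exactly the locus along which $\op{Bl}_LM$ fails to be minimal, so minimality of $M$ off $E$ again pins $E$ down invariantly. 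Granting such a characterization, every $\tilde\phi\in\tilde G$ preserves $E$, the descent homomorphism is defined on all of $\tilde G$, and $q_L$ becomes the asserted isomorphism onto the stabilizer $\op{Stab}_G(L')$. I expect this invariant characterization of the exceptional set to be the main obstacle; the descent, the Hartogs extension, and the Lie-algebra matching are all routine adaptations of the proof of Theorem \ref{symBlUp}.
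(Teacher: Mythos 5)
Your proposal is correct and follows essentially the same route as the paper, which merely remarks that the argument of Theorem \ref{symBlUp} extends to the group case and that the second statement "follows by dimension comparison of Levi degeneracy loci" --- precisely your invariant characterization of the exceptional set $E=\pi_L^{-1}(L')$ (via the low-codimension stratum of the Levi-degeneracy locus, resp.\ the non-minimality locus when $L'=L$), combined with descent, Hartogs extension, and the Lie-algebra matching for the identity component. One small correction: when $L'\subsetneq L$ the set $E$ has real dimension $2n-1$, hence codimension $2$ in $\op{Bl}_LM$, not codimension $1$; this does not affect the argument, since under the hypothesis that the Levi-degeneracy locus of $M$ has $\op{codim}>2$ the set $E$ is still exactly the union of the lowest-codimension components of the Levi-degeneracy locus of $\op{Bl}_LM$ and is therefore preserved by every CR-automorphism.
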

The second statement follows by dimension comparisson of Levi degeneracy loci.

\smallskip

We give an application of this theorem. Let $p+q=n+2$, $1\leq s\leq p\leq q$.
Recall that the parabolic subgroup $P_{s,n-s+2}\subset SU(p,q)$ is the stabilizer of a null $s$-plane
(and thus also of the orthogonal co-isotropic $(n-s+2)$-plane) in the standard representation of $SU(p,q)$
on $\C^{n+2}$, its Lie algebra is the parabolic subalgebra $\mathfrak{p}_{s,n-s+2}$.

 \begin{examp}\label{Ex5}\rm
Let $k=s-1$. Consider the hyperquadric ${\mathcal Q}_{p-1}\subset\C^n(z)\times\C(w)$ defined as
 $$
\op{Im}(w)=\sum_{j=1}^{k}\left(z_j\bar z_{j+k}+z_{j+k}\bar z_j\right)+\|z'\|^2,
 $$
where
 $$
z'=(z_{2k+1},\dots,z_n),
\qquad
\|z'\|^2=\sum_{\ell=2k+1}^{p-1+k}|z_{\ell}|^2-\sum_{\ell=p+k}^{n}|z_{\ell}|^2.
 $$
Let
 \begin{equation}\label{L-mod1}
L=\{(z,w)\in\C^{n+1}:z_j=0\ (1\leq j\leq k),\ z_{\ell}=0\ (2k+1\leq\ell\leq n),\ w=0\}.
 \end{equation}
Clearly, $L$ has dimension $k$ and lies in ${\mathcal Q}_{p-1}$. An open dense subset $M$ of
the blow-up $\op{Bl}_L{\mathcal Q}_{p-1}$ belongs to the hypersurface $S\subset\C^n(z)\times\C(w)$ given by
 \begin{equation}\label{Eq-mod1}
\op{Im}(w)=\sum_{j=1}^{k}\left(z_jw\bar z_{j+k}+z_{j+k}\bar w\bar z_j\right)+|w|^2\cdot\|z'\|^2
 \end{equation}
with the projection $\pi_L:S\to{\mathcal Q}_{p-1}$ given by
 \begin{equation}\label{pi-mod1}
\pi_L(z_1,\dots,z_n,w)=(z_1w,\dots,z_kw, z_{k+1},\dots,z_{2k},z_{2k+1}w,\dots,z_nw,w).
 \end{equation}
The hypersurface $S$ contains the hyperplane $\{w=0\}=\pi_L^{-1}(L)$, and for every $x\in L$ the fiber
$\pi_L^{-1}(x)$ is an $(n-k)$-dimensional vector subspace of $\C^{n+1}$. The singular locus of $S$ is given
by
 $$
\Sigma_S=\Bigl\{\sum_{j=1}^{k}z_j\bar z_{j+k}=-\frac{i}{2},\ w=0\Bigr\}.
 $$
The CR-hypersurface $M$ is obtained by excluding $\Sigma_S$ from $S$.
Thus $M\subset S$ is an open subset containing an open subset of the hyperplane $\{w=0\}$.

By Theorem \ref{symBlUp} the symmetry algebra $\mathfrak{s}(M)$ is $\mathfrak{p}_{s,n-s+2}$.
We do not provide details of this derivation here because in Section \ref{S42} we present these symmetries
explicitly. This will realize all maximal parabolic subalgebras of $\mathfrak{su}(p,q)$.
 \end{examp}

Note that the automorphism group of the spherical surface ${\mathcal Q}_{p-1}$ is not $SU(p,q)$,
but its parabolic subgroup $P_{1,n+1}$ due to incompleteness, and so the automorphism group of its
blow-up is not $P_{s,n-s+2}$ (even for $s=1$).

 \begin{examp}\label{Ex6}\rm
Let us consider the compact version, possessing the automorphism group of maximal size.
For this embed the hyperquadric ${\mathcal Q}={\mathcal Q}_{p-1}$ into projective space and take the closure:
 $$
\overline{{\mathcal Q}}=\left\{[z:w:\xi]\in\C\PP^{n+1}: \frac{w\bar\xi-\xi\bar w}{2i}=
\sum_{j=1}^{k}\left(z_j\bar z_{j+k}+z_{j+k}\bar z_j\right)
+\!\sum_{\ell=2k+1}^{p-1+k}|z_{\ell}|^2-\!\sum_{\ell=p+k}^{n}\!|z_{\ell}|^2\right\},
 $$
where the hyperplane at infinity is $\C\PP^n=\{\xi=0\}$.

The Lie group $G=\PSU(p,q)$ acts transitively on $\overline{{\mathcal Q}}$.
Moreover, it acts transitively on the manifold $N$ of linear subspaces of $\C\PP^{n+1}$ of dimension $k$
that lie in $\overline{{\mathcal Q}}$ with $\dim N=(k+1)(2n-3k+1)$.
The stabilizer of a point $L$ in $N$ is the parabolic subgroup $P_{s,n-s+2}\subset PSU(p,q)$,
and one can verify using Theorem \ref{symBlUp2} that this is indeed the automorphism group of
$\op{Bl}_L\overline{{\mathcal Q}}$.
 \end{examp}

\section{The gap phenomenon}\label{S3}

In this section we prove Theorem \ref{main1}, Corollary \ref{Cor1}, Theorem \ref{main3} and further results.

\subsection{An algebraic dimension bound}\label{S31}

Consider the simple Lie algebra $\mathfrak{su}(p,q)$, $1\le p\le q$, $p+q=n+2\ge 3$, where $p$ counts
the number of positive eigenvalues and $q$ the number of negative ones in the signature of the corresponding Hermitian form. The case of sign-definite metric, i.e.\ the algebra $\mathfrak{su}(n+2)$, will be excluded from consideration.

The algebra has type $A_{n+1}$, and  its parabolic subalgebra corresponding to the crossed nodes that form a subset $I$ of the nodes of the Satake diagram
is denoted by $\mathfrak{p}_I$. In particular, the maximal parabolic subalgebras are
$\mathfrak{p}_{s,n-s+2}$ for $1\leq s\leq p$, where for $n=2m-2$
% we notationally identify the pair ${m,m}$ with ${m}$.
we identify $\mathfrak{p}_{m,m}$ with $\mathfrak{p}_{m}$.
Recall that a cross can be imposed only on a white node of the Satake diagram; any two white nodes related by an arrow shall be crossed simultaneously. Here are some examples:
 \begin{align*}
&\mathfrak{p}_{1,2}\subset\mathfrak{su}(1,2):
{
 \begin{tiny}
 \begin{tikzpicture}[scale=0.8,baseline=-3pt]
\bond{0,0}; \DDnode{x}{0,0}{}; \DDnode{x}{1,0}{};
\node (A) at (0,0.1) {}; \node (B) at (1,0.1) {}; \path[<->,font=\scriptsize,>=angle 90] (A) edge [bend left] (B);
 \useasboundingbox (-.4,-.2) rectangle (2.2,0.55); % make bounding box bigger
 \end{tikzpicture}
 \end{tiny}
 }
&\mathfrak{p}_{1,3}\subset\mathfrak{su}(2,2):
{
 \begin{tiny}
 \begin{tikzpicture}[scale=0.8,baseline=-3pt]
\bond{0,0}; \bond{1,0}; \DDnode{x}{0,0}{}; \DDnode{w}{1,0}{}; \DDnode{x}{2,0}{};
\node (A) at (0,0.1) {}; \node (B) at (2,0.1) {}; \path[<->,font=\scriptsize,>=angle 90] (A) edge [bend left] (B);
 \useasboundingbox (-.4,-.2) rectangle (2.2,0.55); % make bounding box bigger
 \end{tikzpicture}
 \end{tiny}
 }
 \\
&\mathfrak{p}_{1,3}\subset\mathfrak{su}(1,3):
{
 \begin{tiny}
 \begin{tikzpicture}[scale=0.8,baseline=-3pt]
\bond{0,0}; \bond{1,0}; \DDnode{x}{0,0}{}; \DDnode{b}{1,0}{}; \DDnode{x}{2,0}{};
\node (A) at (0,0.1) {}; \node (B) at (2,0.1) {}; \path[<->,font=\scriptsize,>=angle 90] (A) edge [bend left] (B);
 \useasboundingbox (-.4,-.2) rectangle (2.2,0.55); % make bounding box bigger
 \end{tikzpicture}
 \end{tiny}
 }
&\mathfrak{p}_{2}\subset\mathfrak{su}(2,2):
{
 \begin{tiny}
 \begin{tikzpicture}[scale=0.8,baseline=-3pt]
\bond{0,0}; \bond{1,0}; \DDnode{w}{0,0}{}; \DDnode{x}{1,0}{}; \DDnode{w}{2,0}{};
\node (A) at (0,0.1) {}; \node (B) at (2,0.1) {}; \path[<->,font=\scriptsize,>=angle 90] (A) edge [bend left] (B);
 \useasboundingbox (-.4,-.2) rectangle (2.2,0.55); % make bounding box bigger
 \end{tikzpicture}
 \end{tiny}
 }
 \end{align*}

 \begin{proposition}\label{dimmaxparsubalg}
Dimension of the maximal parabolic subalgebra $\mathfrak{p}_{s,n-s+2}\subset\mathfrak{su}(p,q)$
is $d_n(s)=n^2-2sn+3s^2+4n-4s+3$.
 \end{proposition}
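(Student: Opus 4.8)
The plan is to realize $\mathfrak{p}_{s,n-s+2}$ concretely as the stabilizer of a null (isotropic) $s$-plane $U$ in the standard representation of $\mathfrak{su}(p,q)$ on $\C^{n+2}$, and then to count real dimensions directly from a block decomposition adapted to $U$. Since $s\le p=\min(p,q)$, such an isotropic $U$ exists, and one may choose a basis of $\C^{n+2}$ realizing $\C^{n+2}=U\oplus W\oplus U^*$ with $\dim_\C U=\dim_\C U^*=s$ and $\dim_\C W=r:=n+2-2s$, in which the defining Hermitian form is represented by $H=\left(\begin{smallmatrix} 0 & 0 & I_s \\ 0 & H_W & 0 \\ I_s & 0 & 0 \end{smallmatrix}\right)$, where $H_W$ is nondegenerate Hermitian of signature $(p-s,q-s)$ on $W$. (The boundary case $r=0$, occurring when $s=p=q$ and the two crosses of the Satake diagram merge into $\mathfrak{p}_m$, is covered by the same count with $W=0$.)

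First I would observe that preserving the flag $U\subset U^\perp=U\oplus W$ forces $X$ to be block upper-triangular for this decomposition, $X=\left(\begin{smallmatrix} A & B & C \\ 0 & E & F \\ 0 & 0 & L \end{smallmatrix}\right)$. Then I would impose the condition $X^*H+HX=0$ defining $\mathfrak{u}(p,q)$ and read off the block relations by a direct computation of $HX$ and $X^*H$: the $(1,3)$ block gives $L=-A^*$, the $(2,3)$ block gives $B=-F^*H_W$, the $(2,2)$ block gives $H_WE+E^*H_W=0$ (so $E\in\mathfrak{u}(p-s,q-s)$), and the $(3,3)$ block gives that $C$ is skew-Hermitian; the remaining blocks yield only consistency conditions. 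Hence the free data are an arbitrary $A\in\mathrm{Mat}_{s\times s}(\C)$, an arbitrary $F\in\mathrm{Mat}_{r\times s}(\C)$, a skew-Hermitian $C$ of size $s$, and $E\in\mathfrak{u}(p-s,q-s)$, while $B$ and $L$ are determined.

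Counting real dimensions then gives $2s^2$ (from $A$), $2rs$ (from $F$), $s^2$ (from $C$) and $r^2$ (from $E$), for a total of $3s^2+2rs+r^2$ inside $\mathfrak{u}(p,q)$. Passing to $\mathfrak{su}(p,q)$ removes exactly one further real dimension, since the central scalar $iI_{n+2}$ lies in the parabolic and has nonzero trace, so tracelessness is a single nontrivial linear constraint. Substituting $r=n+2-2s$ and expanding $3s^2+2rs+r^2-1$ produces $n^2-2sn+3s^2+4n-4s+3=d_n(s)$, as claimed; the examples $\mathfrak{p}_{1,2}\subset\mathfrak{su}(1,2)$, $\mathfrak{p}_{1,3}\subset\mathfrak{su}(2,2)$ and $\mathfrak{p}_2\subset\mathfrak{su}(2,2)$ give the sanity checks $d_1(1)=5$, $d_2(1)=10$, $d_2(2)=11$.

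I expect the only genuinely delicate point to be the bookkeeping that separates real from complex dimensions (the $2s^2$ and $2rs$ contributions against the $s^2$ and $r^2$ ones), together with the verification that the passage to the traceless subalgebra costs exactly one dimension rather than two. As an independent cross-check, the same number arises by complexification: a real parabolic $\mathfrak{p}_\R\subset\mathfrak{su}(p,q)$ satisfies $\dim_\R\mathfrak{p}_\R=\dim_\C(\mathfrak{p}_\R)_\C$, and $(\mathfrak{p}_\R)_\C$ is the parabolic of $\mathfrak{sl}(n+2,\C)$ stabilizing a flag with block sizes $(s,\,n+2-2s,\,s)$, whose complex dimension is the sum of the upper block entries $3s^2+2rs+r^2$ minus one for tracelessness, i.e.\ the identical count.
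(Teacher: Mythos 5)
Your proof is correct, and it arrives at $d_n(s)=3s^2+2rs+r^2-1$ with $r=n+2-2s$ by a concrete matrix computation: realize $\mathfrak{p}_{s,n-s+2}$ as the stabilizer of an isotropic $s$-plane, put the Hermitian form in the anti-diagonal block shape, solve $X^*H+HX=0$ block by block, and count real parameters. The paper instead computes abstractly via the $|2|$-grading attached to the parabolic: it uses $\dim\mathfrak{p}=\tfrac12(\dim\g+\dim\g_0)$ together with the Satake-diagram description $\g_0\simeq\mathfrak{z}(\g_0)\oplus\g_0^{ss}$, where the center has dimension equal to the number of crosses and $\g_0^{ss}$ has type $2A_{s-1}+A_{n-2s+1}$; this gives $\tfrac12((n+2)^2+(n-2s+2)^2)+s^2-1$, which expands to the same polynomial. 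The two routes are genuinely different in presentation but closely related in substance (your complexified cross-check $\dim\g_{\ge0}=\tfrac12(\dim\g+\dim\g_0)$ is essentially the paper's identity in disguise). What your version buys is an explicit verification that the stabilizer of a null $s$-plane really is $\mathfrak{p}_{s,n-s+2}$ and an elementary treatment of the real-versus-complex bookkeeping, including the correct observation that tracelessness cuts exactly one real dimension because the trace of a block as above is automatically purely imaginary; what the paper's version buys is brevity and uniformity over all parabolics (not just maximal ones), with the boundary case $r=0$ (your $W=0$, the paper's $n=2m$, $s=m+1$) absorbed into the same formula in both treatments. All your stated block relations and the sanity checks $d_1(1)=5$, $d_2(1)=10$, $d_2(2)=11$ are consistent with the paper's table.
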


\begin{proof} For $\g=\mathfrak{su}(p,q)$ the grading $\g=\g_{-\nu}\oplus\dots\oplus\g_0\oplus\dots\oplus\g_{\nu}$
corresponding to a parabolic subalgebra $\mathfrak{p}\simeq\g_0\oplus\dots\oplus\g_{\nu}$ of $\g$
has $\g_0\simeq\mathfrak{z}(\g_0)\oplus\g_0^{ss}$, where the first summand is the center of dimension equal to the
number of crosses in the Satake diagram of $\g$ and the second (semisimple) summand corresponds to the Satake diagram obtained by removing the crosses.
Thus, a maximal parabolic subalgebra of $\g$, independently of the coloring of the nodes, satisfies:
 \begin{align*}
\hskip35pt
\dim\mathfrak{p}_{s,n-s+2}
&=\tfrac12(\dim\g+\dim\g_0)\\
&=\tfrac12(\dim A_{n+1}+2+2\dim A_{s-1}+\dim A_{n-2s+1})\\
&=\tfrac12((n+2)^2+(n-2s+2)^2)+s^2-1=d_n(s).
 \end{align*}
The case $n=2m$, $s=m+1$ is special yet subject to this formula.
 \end{proof}

Some initial values of $d_n(s)$ are as follows:
 $$\begin{tabular}{r|cccc}
\!\!${}_{n}\!\!\diagdown\!\!{}^s$\!\! & 1 & 2 & 3 & 4\\
\hline
1 & 5\\
\hline
2 & 10 & 11\\
\hline
3 & 17 & 16\\
\hline
4 & 26 & 23 & 26\\
\hline
5 & 37 & 32 & 33\\
\hline
6 & 50 & 43 & 42 & 47\\
\hline
7 & 65 & 56 & 53 & 56
 \end{tabular}$$

 \begin{corollary}\label{maxdimparab}
The maximal dimension of a parabolic subalgebra of $\mathfrak{su}(p,q)$ is uniquely given by
$\dim\mathfrak{p}_{1,n+1}=n^2+2n+2$ except for $n=2$ where the maximum is attained by $\dim\mathfrak{p}_2=11>\dim\mathfrak{p}_{1,3}$ and $n=4$ where
$\dim\mathfrak{p}_{3}=\dim\mathfrak{p}_{1,5}=26$.
 \end{corollary}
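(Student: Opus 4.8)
The plan is to reduce the statement to an optimization of the single‑variable function $d_n(s)$ furnished by Proposition~\ref{dimmaxparsubalg}, and then to analyze that function by convexity. First I would reduce to maximal parabolics. Parabolic subalgebras of $\g=\mathfrak{su}(p,q)$ are indexed by the subsets $I$ of crossable nodes of the Satake diagram, with $\mathfrak{p}_I\subseteq\mathfrak{p}_J$ whenever $J\subseteq I$; equivalently, by the identity $\dim\mathfrak{p}=\tfrac12(\dim\g+\dim\g_0)$ used in the proof of Proposition~\ref{dimmaxparsubalg}, the dimension can only decrease as more nodes are crossed. Hence any \emph{proper} parabolic ($I\neq\emptyset$) is contained in a maximal parabolic $\mathfrak{p}_{s,n-s+2}$, so the maximum of $\dim\mathfrak{p}$ over all proper parabolics is attained among the maximal ones. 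It therefore suffices to maximize $d_n(s)$ over the admissible integers $1\le s\le p$.

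Next I would exploit that, as a function of the real variable $s$, $d_n(s)=n^2-2sn+3s^2+4n-4s+3$ is an upward parabola with minimum at $s^\ast=(n+2)/3$, so on any interval its maximum occurs at an endpoint. Since $s$ ranges over $1\le s\le p$ and $p\le q$ with $p+q=n+2$ forces $p\le(n+2)/2$, the maximum is reached either at $s=1$ or at $s=p$. A direct factorization,
$$d_n(s)-d_n(1)=(s-1)(3s-2n-1),\qquad d_n(1)=n^2+2n+2=\dim\mathfrak{p}_{1,n+1},$$
then governs the comparison: for $s\ge2$ the sign of the difference is that of $3s-2n-1$.

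Finally I would run the sign analysis on the admissible range. For $s\le p\le(n+2)/2$ one has $3s-2n-1\le(4-n)/2$, which is strictly negative for $n\ge5$; hence $d_n(s)<d_n(1)$ for every admissible $s>1$ and $\mathfrak{p}_{1,n+1}$ is the unique maximizer there. The requirement $3s-2n-1\ge0$ combined with $s\le(n+2)/2$ is compatible only for $n\le4$, and then only at the largest admissible value $s=(n+2)/2$, which is an integer (and permitted) precisely in the balanced form $\mathfrak{su}(\tfrac{n+2}{2},\tfrac{n+2}{2})$. There the factorization yields $d_n(\tfrac{n+2}{2})-d_n(1)=\tfrac14 n(4-n)$, equal to $1$ for $n=2$ and to $0$ for $n=4$, producing exactly the two exceptions: $\dim\mathfrak{p}_2=11>10=\dim\mathfrak{p}_{1,3}$ in $\mathfrak{su}(2,2)$, and $\dim\mathfrak{p}_3=\dim\mathfrak{p}_{1,5}=26$ in $\mathfrak{su}(3,3)$; all remaining cases (odd $n$, where even the extreme endpoint $s=p=(n+1)/2$ gives $3s-2n-1=(1-n)/2<0$, and non‑balanced even forms) leave $s=1$ as the strict maximizer, after the trivial residual checks $n=1,3$. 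The one point demanding care is the bookkeeping of the constraint $s\le p\le(n+2)/2$: the optimization itself is immediate from convexity, but it is this admissibility bound that pins the exceptional equalities to the split‑balanced algebras $\mathfrak{su}(2,2)$ and $\mathfrak{su}(3,3)$ rather than letting them recur for every $n$.
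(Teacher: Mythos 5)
Your proposal is correct and follows essentially the route the paper intends: the corollary is presented as an immediate consequence of Proposition~\ref{dimmaxparsubalg}, and you simply carry out the elementary optimization of the quadratic $d_n(s)$ over the admissible range $1\le s\le p\le(n+2)/2$ that the paper leaves implicit (illustrated only by its table of values). Your factorization $d_n(s)-d_n(1)=(s-1)(3s-2n-1)$ and the observation that the exceptional cases are pinned to the balanced algebras $\mathfrak{su}(2,2)$ and $\mathfrak{su}(3,3)$ by the constraint $s\le p$ are exactly the right bookkeeping, and the preliminary reduction to maximal parabolics via $\dim\mathfrak{p}=\tfrac12(\dim\g+\dim\g_0)$ is a valid (if tacitly assumed in the paper) step.
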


Now we restrict the dimension of a proper subalgebra of the pseudounitary algebra,
which simultaneously gives a bound for subgroups of the pseudounitary group.

 \begin{theorem}\label{subalgmaxdim}
A proper subalgebra of $\mathfrak{su}(p,q)$ of maximal dimension
is a parabolic subalgebra, as described in Corollary {\rm\ref{maxdimparab}}.
 \end{theorem}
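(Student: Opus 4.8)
The plan is to show that any proper subalgebra $\mathfrak{h}\subset\mathfrak{g}=\mathfrak{su}(p,q)$ satisfies $\dim\mathfrak{h}\le\dim\mathfrak{p}_{\max}$, where $\mathfrak{p}_{\max}$ is the maximal-dimensional parabolic identified in Corollary \ref{maxdimparab}, and that equality forces $\mathfrak{h}$ to be that parabolic. Since we already know the exact dimensions $d_n(s)$ of all maximal parabolics from Proposition \ref{dimmaxparsubalg}, the whole problem reduces to a comparison: we must exhibit, for every proper subalgebra, a parabolic of dimension at least as large, and rule out non-parabolic subalgebras of equal or larger dimension. I would split the argument according to whether $\mathfrak{h}$ is algebraic (in the sense of being the Lie algebra of an algebraic subgroup of $SU(p,q)$) and, among those, whether it is parabolic, reductive, or has a nontrivial unipotent radical.

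The first and cleanest reduction is to pass to a maximal subalgebra containing $\mathfrak{h}$: it suffices to bound the dimension of \emph{maximal} proper subalgebras, since $\dim\mathfrak{h}\le\dim\mathfrak{h}'$ for any $\mathfrak{h}\subset\mathfrak{h}'$. For a simple Lie algebra the maximal subalgebras are classified into two types: those that are not reductive, which by the Morozov--Borel--Tits theory are exactly the maximal parabolic subalgebras (the normalizer of their nilradical is themselves), and those that are reductive. The maximal parabolics are precisely the ones enumerated by $\mathfrak{p}_{s,n-s+2}$, whose dimensions I read off from the table, so the parabolic case is already handled by Corollary \ref{maxdimparab}. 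The real content is therefore to prove that every \emph{reductive} maximal subalgebra $\mathfrak{h}$ has strictly smaller dimension than $\dim\mathfrak{p}_{1,n+1}=n^2+2n+2$ (with the appropriate $n=2,4$ caveats).

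For the reductive case the key estimate is on the codimension: a reductive maximal subalgebra $\mathfrak{h}$ of a simple algebra of rank $r=n+1$ and dimension $\dim\mathfrak{g}=(n+2)^2-1$ must have codimension at least as large as the smallest faithful representation dimension forces, and in fact one expects $\operatorname{codim}\mathfrak{h}\ge 2(n+1)=2r$, comfortably exceeding the parabolic codimension $\dim\mathfrak{g}-\dim\mathfrak{p}_{1,n+1}=2n+1$. Concretely, I would invoke the classification of maximal reductive (equivalently, maximal semisimple-plus-center) subalgebras: over $\mathbb{C}$ these come from Dynkin's work and consist of regular subalgebras (centralizers of tori, already of small dimension) and S-subalgebras coming from low-dimensional irreducible representations; for the real form $\mathfrak{su}(p,q)$ one restricts these via the real structure. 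In each family the dimension is quadratically smaller than $(n+2)^2$, so the inequality $\dim\mathfrak{h}<n^2+2n+2$ holds for all $n\ge 3$, with the sporadic coincidences at $n=2$ and $n=4$ accounted for by the parabolics $\mathfrak{p}_2$ and $\mathfrak{p}_3$ respectively.

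The main obstacle I anticipate is the bookkeeping in the reductive case: one genuinely has to run through the list of maximal reductive subalgebras of each $\mathfrak{su}(p,q)$ and verify the dimension inequality uniformly in $n$, paying particular attention to small-rank coincidences where a reductive subalgebra could conceivably compete with the top parabolic. A cleaner route that I would try first is to avoid the full classification by a direct codimension argument: any proper subalgebra $\mathfrak{h}$ of maximal dimension is the stabilizer of a line (or a proper subspace, or a splitting) in some $\mathfrak{g}$-module, and bounding $\dim\mathfrak{h}$ from above amounts to bounding from below the dimension of the smallest nontrivial $\mathfrak{g}$-orbit, namely the orbit of highest-weight vectors in the adjoint or standard representation; this minimal orbit dimension is exactly $\dim\mathfrak{g}-\dim\mathfrak{p}_{1,n+1}$, and any non-parabolic stabilizer sits inside a larger orbit and hence has smaller dimension. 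Establishing that the parabolic orbit is the unique minimal one, with the two stated exceptions, is the crux; once it is in place the theorem follows immediately from Proposition \ref{dimmaxparsubalg} and Corollary \ref{maxdimparab}.
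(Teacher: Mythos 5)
Your proposal follows essentially the same route as the paper: reduce to maximal subalgebras, use structure theory (the paper cites Mostow's theorem, giving the trichotomy parabolic / centralizer of a pseudotorus / semisimple, which matches your parabolic-versus-reductive split) to isolate the reductive case, and then run Dynkin's classification of maximal semisimple subalgebras of $\mathfrak{sl}(n+2,\C)$ against the parabolic dimensions of Proposition \ref{dimmaxparsubalg}. The paper's proof is exactly the case-by-case bookkeeping you defer --- the pseudotorus centralizers $\mathfrak{u}(p,q-1)$, $\mathfrak{u}(p-1,q)$ of dimension $(n+1)^2$, the tensor-product subalgebras, the classical types $A$, $B$, $C$, $D$ (where $\mathfrak{sp}(2k+2,\C)$ at $n=2$ is the one case that violates your heuristic codimension bound $2(n+1)$ and needs the $\mathfrak{p}_2$ exception), and the exceptionals via their minimal faithful representations --- so your outline is correct and only that routine verification remains to be written out.
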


 \begin{proof}
By Mostow's theorem \cite{M}, a maximal subalgebra of a real simple Lie algebra is either parabolic, or
the centralizer of a pseudotorus, or semisimple.

The centralizers of pseudotoric subalgebras of $\mathfrak{su}(p,q)$ have the maximal possible dimension for either
$\mathfrak{u}(p,q-1)$ or $\mathfrak{u}(p-1,q)$, both of dimension $(n+1)^2<\dim\mathfrak{p}_{1,n+1}$.

%Let us study the semisimple subalgebras of $\mathfrak{su}(p,q)$ of maximal dimension.
Next, fix a semisimple subalgebra $\h\subset \mathfrak{su}(p,q)$; by complexifying it we obtain a subalgebra $\h^\C\subset\mathfrak{su}(p,q)^\C=\mathfrak{sl}(n+2,\C)$.
% It is clear that $\op{rank}\g\leq n+1=\op{rank}A_{n+1}$.
By Dynkin's theorem (see \cite{D} and also \cite[Chap. 6, Sect. 3.2]{GOV}) a maximal semisimple subalgebra of the simple Lie algebra of type $A_{n+1}$
is either (i) nonsimple irreducible, or (ii) simple irreducible.

If $\h^{\C}$ falls in Case (i), we have $n+2=st$ ($1<s\leq t<n+2$, hence $n\ge 2$) and $\h^\C=\mathfrak{sl}(s,\C)\oplus\mathfrak{sl}(t,\C)$ is embedded in $\mathfrak{sl}(n+2,\C)$ via the representation on $\C^s\otimes\C^t=\C^{n+2}$. Then $\dim_\C\h^\C=s^2+t^2-2=s^2+\frac{(n+2)^2}{s^2}-2$. The maximum of the function $s^2+\frac{(n+2)^2}{s^2}-2$ on the interval $2\le s\le n+1$ is attained at $s=n+1$ and is clearly seen to be strictly less than $\dim\mathfrak{p}_{1,n+1}$.

In Case (ii) we first assume that $\h^\C$ is a classical Lie subalgebra of $\mathfrak{sl}(n+2,\C)$. If $\h^{\C}$
has type $A$, then $\dim_{\C}\h^{\C}$ is maximal if $\h^{\C}=\mathfrak{sl}(k+1,\C)\subset\mathfrak{sl}(n+2,\C)$,
$k\leq n$, which
 % Such an irreducible representation exists only for $n=1$, but we disregard this to simplify our argument
does not give the optimal dimension as $\dim_\C\mathfrak{sl}(k+1,\C)\leq n^2+2n<\dim\mathfrak{p}_{1,n+1}$.

If $\h^{\C}$ has type $B$ or $D$, then $\dim_{\C}\h^{\C}$ is maximal if $\h^{\C}=\mathfrak{so}(n+2,\C)\subset\mathfrak{sl}(n+2,\C)$,
which does not give the optimal dimension since $\dim_\C\mathfrak{so}(n+2,\C)=\frac12(n^2+3n+2)<\dim\mathfrak{p}_{1,n+1}$.

Suppose that $\h^{\C}$ has type $C$ and write $n=2k+r$, where $r$ is either 0 or 1. Then $\dim_{\C}\h^{\C}$ is maximal if $\h^{\C}=\mathfrak{sp}(2k+2,\C)\subset\mathfrak{sl}(n+2,\C)$, which again does not give the optimal dimension as $\dim_{\C}\mathfrak{sp}(2k+2,\C)=(k+1)(2k+3)$. Indeed, this number is strictly less than $\dim\mathfrak{p}_{1,n+1}$ for $n\ne 2$ and is strictly less than $11=\dim\mathfrak{p}_{2}$ for $n=2$.

Consider now the exceptional Lie algebras. The representation $V$ of minimal dimension of $\mathfrak{g}_2=\op{Lie}(G_2)$
has dimension 7 ($V=R_{\lambda_1}$), so if $\h^{\C}=\mathfrak{g}_2$ we have $n\ge 5$. Hence $\mathfrak{g}_2$ does not give the optimal dimension
since $\dim_\C\mathfrak{g}_2=14<5^2+2\cdot5+2$.

Similarly, the representation $V$ of minimal dimension of the exceptional Lie algebra $\mathfrak{f}_4=\op{Lie}(F_4)$
has dimension 26 ($V=R_{\lambda_4}$), so if $\h^{\C}=\mathfrak{f}_4$ we have $n\ge 24$. Hence $\mathfrak{f}_4$ does not give the optimal dimension
since $\dim_\C\mathfrak{f}_4=52<24^2+2\cdot24+2$.

In the same way, we argue for the E-series: the representation $V$ of minimal dimension for $\mathfrak{e}_6$,
$\mathfrak{e}_7$, $\mathfrak{e}_8$ has dimension 27, 56, 248, respectively (and for $V$ we have, respectively,
$R_{\lambda_1}\simeq R_{\lambda_6}$, $R_{\lambda_7}$, $R_{\lambda_8}$ in Bourbaki's enumeration).
Hence, none of these algebras gives the optimal dimension since
$\dim_{\C}\mathfrak{e}_6=78<25^2+2\cdot25+2$, $\dim_{\C}\mathfrak{e}_7=133<54^2+2\cdot54+2$, $\dim_{\C}\mathfrak{e}_8=248<246^2+2\cdot246+2$.

Thus, all semisimple subalgebras of $\mathfrak{su}(p,q)$ have dimensions strictly smal\-ler than the maximal possible dimension of a parabolic subalgebra.
\end{proof}

\begin{remark}\label{remsubsub} \rm
By \cite[Proposition 2.1; Remark 2.5]{IK2}, for $n=2$ every proper subalgebra of $\mathfrak{su}(p,q)$ of
dimension $10=n^2+2n+2$ is also parabolic and conjugate to $\mathfrak{p}_{1,3}$.
\end{remark}

\subsection{Establishing the submaximal symmetry dimension}\label{S32} % $D_{\hbox{\tiny\rm smax}}$}

We assumed that $M$ has a point of Levi-nondegeneracy, which implies that $M$ is {\it holomorphically nondegenerate},
see \cite[Theorem 11.5.1]{BER}. The condition of holomorphic nondegeneracy for a real-analytic hypersurface
in complex space was introduced in \cite{St} and requires that for every point of the hypersurface there exists
no nontrivial holomorphic vector field tangent to the hypersurface near the point.
Extensive discussions of this condition can be found in \cite[\S 11.3]{BER}, \cite{E},
but we only make a note of the fact, stated in \cite[Corollary 12.5.5]{BER}, that the holomorphic nondegeneracy of $M$
is equivalent to the finite-dimensionality of all the algebras $\hol(M,x)$.
Notice that together with \cite[Proposition 12.5.1]{BER} this corollary implies that the finite-dimensionality
of $\hol(M,x_0)$ for some $x_0\in M$ implies  the finite-dimensionality of $\hol(M,x)$ for all $x\in M$.

Clearly, $\mathfrak{s}(M)=\hol(M)$ may be viewed as a subalgebra of $\hol(M,x)$ for any $x$.
Therefore for a holomorphically nondegenerate $M$, and in particular for the case we consider,
the symmetry algebra $\mathfrak{s}(M)$ is finite-dimensional.

\medskip

 \begin{proof}[Proof of Theorem \ref{main1}]
For $n=1$ the theorem was obtained in \cite{KS2,IK1}, for $n=2$ its stronger variant was proven in \cite{IK2},
so we assume that $n\ge 3$.

Let $S_M$ be the Levi-degeneracy locus of $M$. It is a proper real-analytic subset of $M$.
Then $U=M\setminus S_M$ is an open dense subset of $M$.

Choose a point $x\in U$. The natural map $\hol(M)\to\hol(U)\to\hol(M,x)$ is injective.
If $x$ is not spherical, \cite{K2} implies $\dim\hol(M)\le n^2+4$ that is less than $n^2+2n+2$.
 % This proves the first statement about spherical points in the theorem.

Thus every point of $U$ is spherical. Then $\hol(M)$ is a subalgebra of $\mathfrak{su}(p,q)$ for some
$1\le p\le q$, $p+q=n+2$, and by Theorem \ref{subalgmaxdim} there is an alternative:
 \begin{itemize}
\item[(i)] $\hol(M)=\mathfrak{su}(p,q)$;
\item[(ii)] $\dim\hol(M)\le n^2+2n+2$.
 \end{itemize}

Consider Case (i) first. We will show that $M$ is spherical everywhere.
Since we already established sphericity on $U$, fix a point $x_0\in S_M$.
Consider the isotropy subalgebra $\hol_0(M)$ of $\hol(M)$ at $x_0$.
Clearly, $\dim \hol_0(M)\ge (n+2)^2-1-(2n+1)=n^2+2n+2$.
Hence, appealing to Theorem \ref{subalgmaxdim} once again, we see that one of the following holds:

 \begin{itemize}
\item[(ia)] $\dim \hol_0(M)=n^2+2n+2$;
\item[(ib)] $\hol_0(M)=\hol(M)=\mathfrak{su}(p,q)$.
 \end{itemize}

In Case (ia), the orbit of $x_0$ under the corresponding local action of the group $\SU(p,q)$ is open,
so it contains a spherical point $x\in U$, and hence $M$ is spherical near the point $x_0$ as well.

In Case (ib), by the Guillemin-Sternberg theorem \cite[pp.~113--115]{GS}, the action of the simple Lie algebra
$\mathfrak{su}(p,q)$ is linearizable near $x_0$, and we obtain a nontrivial $(2n+1)$-dimensional representation
of $\mathfrak{su}(p,q)$. But the lowest-dimensional representation of $\mathfrak{su}(p,q)$ is the standard
$\C^{p,q}$ of real dimension $2n+4$, which is a contradiction.

Consider now Case (ii) and assume that $\dim\hol(M)=n^2+2n+2$. Then by Theorem \ref{subalgmaxdim} the algebra $\hol(M)$ is isomorphic either to the parabolic subalgebra $\mathfrak{p}_{1,n+1}$ of $\mathfrak{su}(p,q)$, or, if $n=4$ and $p=q=3$, to the parabolic subalgebra $\mathfrak{p}_3$ of $\mathfrak{su}(3,3)$. As all such parabolic subalgebras are pairwise nonisomorphic, we see that $p$ and $q$ are determined uniquely. Therefore, the Levi form of $M$ has fixed signature on $U$.

Finally, the obtained upper bound for the symmetry dimension is realizable due to Corollary \ref{realizationP}.
This finished the proof.
 \end{proof}

\medskip

 \begin{proof}[Proof of Corollary \ref{Cor1}]
If $M$ is holomorphically nondegenerate, then for every $x\in M$ there exists a connected neighborhood $U$
of $x$ in $M$ for which the natural map $\hol(U)\to\hol(M,x)$ is surjective \cite[Proposition 12.5.1]{BER};
for any such $U$ we have $\hol(M,x)=\hol(U,x)=\hol(U)$. Taking $U$ instead of $M$ in Theorem \ref{main1},
the statement of the corollary follows.
 \end{proof}

\subsection{Some results on spherical points}\label{S33}

Let us further discuss the result of Theorem \ref{main1}.
First note that the exceptional case $n=2$ can be included into part of the statement as follows.

 \begin{prop}\label{main1+}
Assume that $M$ is a real-analytic connected CR-hypersurface of CR-dimension $n\ge 1$ having a
Levi-nondegenerate point. If $\dim\mathfrak{s}(M)\ge n^2+2n+2$, then $M$ is spherical on its
Levi-nondegeneracy locus with fixed signature of the Levi form.
 \end{prop}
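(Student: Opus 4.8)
The plan is to reduce Proposition \ref{main1+} to Theorem \ref{main1} by treating the exceptional case $n=2$ separately and showing that in all cases the hypothesis $\dim\mathfrak{s}(M)\ge n^2+2n+2$ forces every Levi-nondegenerate point to be spherical with a fixed signature. First I would fix a Levi-nondegenerate point $x\in U=M\setminus S_M$ and use the injective restriction maps $\hol(M)\to\hol(M,x)$, exactly as in the proof of Theorem \ref{main1}. If $x$ were not spherical, then by the non-spherical bound of \cite{K2} we would have $\dim\hol(M)\le n^2+4$, which contradicts the hypothesis $\dim\mathfrak{s}(M)\ge n^2+2n+2$ as soon as $n\ge 1$ (since $n^2+2n+2>n^2+4$ for all $n\ge 2$, and for $n=1$ the sharper bound $D_{\hbox{\tiny\rm smax}}=5$ from \cite{KS2,IK1} applies). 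Hence every point of the Levi-nondegeneracy locus is spherical.

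Next I would establish that the signature of the Levi form is constant on $U$. Once sphericity on $U$ is known, $\hol(M)$ embeds as a subalgebra of $\mathfrak{su}(p,q)$ with $p+q=n+2$, and the pair $(p,q)$ is determined by the signature $(p-1,q-1)$ of the Levi form near $x$. The point is that $U$ is connected on each connected component, and the local CR-equivalence to a piece of $\mathcal{Q}_k$ makes the signature a locally constant, hence constant, function on $U$; so a single pair $(p,q)$ governs the whole nondegeneracy locus. Here I must be slightly careful: $U$ need not be connected even if $M$ is, so I would argue that the signature cannot jump across the closure relations imposed by the ambient simple algebra $\mathfrak{su}(p,q)$, invoking that the abstract isomorphism type of $\hol(M)$ is one fixed algebra and that the distinct pseudo-unitary algebras $\mathfrak{su}(p,q)$ (and their maximal parabolics $\mathfrak{p}_{1,n+1}$, $\mathfrak{p}_3$) are pairwise non-isomorphic, as already noted in the proof of Theorem \ref{main1}.

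The only genuinely new content beyond Theorem \ref{main1} is the inclusion of the case $n=2$. For $n\neq 2$ the conclusion is immediate from the argument above together with the identification of $\hol(M)$ with a subalgebra of $\mathfrak{su}(p,q)$ of dimension $\ge n^2+2n+2$, which by Theorem \ref{subalgmaxdim} and Corollary \ref{maxdimparab} is either all of $\mathfrak{su}(p,q)$ (Case (i)) or the unique maximal parabolic $\mathfrak{p}_{1,n+1}$ (Case (ii)); in both situations sphericity on $U$ with fixed signature follows as in Theorem \ref{main1}. For $n=2$ I would appeal to the stronger result of \cite{IK2} quoted in the introduction: there either $\dim\mathfrak{s}(M)=15$ and $M$ is spherical, or $\dim\mathfrak{s}(M)\le 11$ with equality forcing sphericity on a dense open set with Levi signature $(1,1)$. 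Combined with Remark \ref{remsubsub}, which says that for $n=2$ every proper subalgebra of $\mathfrak{su}(p,q)$ of dimension $10=n^2+2n+2$ is conjugate to $\mathfrak{p}_{1,3}$, the hypothesis $\dim\mathfrak{s}(M)\ge n^2+2n+2=10$ again pins down both the sphericity and the signature on the nondegeneracy locus.

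The main obstacle I anticipate is the careful bookkeeping of the signature across possibly disconnected pieces of $U$ and across the different maximal parabolics that can realize the critical dimension (notably the coincidence $\dim\mathfrak{p}_3=\dim\mathfrak{p}_{1,5}=26$ at $n=4$). I would handle this by using that the isomorphism type of $\hol(M)$ as an abstract Lie algebra is fixed, together with the non-isomorphism of the relevant subalgebras, to conclude that the pair $(p,q)$ and hence the Levi signature is globally determined on the nondegeneracy locus. The argument is essentially a repackaging of the proof of Theorem \ref{main1}, isolating the assertion about the nondegeneracy locus that holds uniformly in $n$ including $n=2$.
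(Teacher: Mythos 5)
Your proposal follows the paper's proof in all essentials: for $n\neq 2$ the statement is precisely what the argument for Theorem \ref{main1} already gives (the hypothesis $\dim\mathfrak{s}(M)\ge n^2+2n+2$ forces either case (i) or equality in case (ii), and the pairwise non-isomorphism of the algebras $\mathfrak{su}(p,q)$ and of their maximal parabolics pins down the signature), while the only genuinely new content is the case $n=2$, $\dim\mathfrak{s}=10$, which you, like the paper, settle via Remark \ref{remsubsub} together with the fact that $\mathfrak{p}_{1,3}\subset\mathfrak{su}(1,3)$ and $\mathfrak{p}_{1,3}\subset\mathfrak{su}(2,2)$ are not isomorphic.

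One step as written does not work and should be repaired: for $n=1$ you derive the sphericity of Levi-nondegenerate points by claiming a contradiction with the bound $D_{\hbox{\tiny\rm smax}}=5$ of \cite{KS2,IK1}. But the hypothesis for $n=1$ is exactly $\dim\mathfrak{s}(M)\ge 5$, which contradicts neither that bound nor the estimate $n^2+4=5$; no contradiction follows from what you cite. The correct tool here is Cartan's bound $\dim\mathfrak{s}\le 3$ at a non-spherical Levi-nondegenerate point when $n=1$ (the constant $d_0=3$ appearing in Proposition \ref{main1++}), or, more simply, a direct appeal to Theorem \ref{main1} for $n=1$: its case (ii) with equality already asserts sphericity on the Levi-nondegeneracy locus with fixed signature. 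With that substitution your argument is complete and coincides with the paper's.
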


 \begin{proof}
Only the case $n=2$ is special in regard to the proof of Section \ref{S32}. In this case
$\dim\mathfrak{s}=10=n^2+2n+2$, and the statement follows by Remark \ref{remsubsub}
since the parabolic subalgebras $\mathfrak{p}_{1,3}\subset\mathfrak{su}(1,3)$ and
$\mathfrak{p}_{1,3}\subset\mathfrak{su}(2,2)$ derived in \cite{IK2} are not isomorphic.
 \end{proof}

Next, set
\vskip-20pt

 $$
d_0=\left\{\begin{array}{cl}
3 & \hbox{if $n=1$},\\ \vspace{-0.3cm}\\ n^2+4 & \hbox{if $n>1$.}
\end{array}\right.
 $$

 \begin{prop}\label{main1++}
Under the assumption of Proposition \ref{main1+} the inequality $\dim\mathfrak{s}>d_0$ implies that $M$
is spherical on its Levi-nondegeneracy locus, possibly with different signatures of the Levi form at different points.
 \end{prop}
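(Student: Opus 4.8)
The plan is to reduce the claim to an application of the dimension bound from \cite{K2} at non-spherical Levi-nondegenerate points, combined with the density of the Levi-nondegeneracy locus. The statement to prove is that $\dim\mathfrak{s}>d_0$ forces $M$ to be spherical on its entire Levi-nondegeneracy locus $U=M\setminus S_M$, where now we do \emph{not} demand a fixed signature of the Levi form across different components of $U$.

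First I would fix an arbitrary Levi-nondegenerate point $x\in U$ and consider the natural injective restriction maps $\mathfrak{s}(M)=\hol(M)\hookrightarrow\hol(U)\hookrightarrow\hol(M,x)$, exactly as in the proof of Theorem \ref{main1}. Since $x$ is Levi-nondegenerate, the dichotomy of \cite{K2} applies locally at $x$: either $M$ is spherical near $x$, or the symmetry dimension is bounded by $\dim\hol(M,x)\le d_0$, where $d_0=3$ for $n=1$ and $d_0=n^2+4$ for $n>1$ is precisely the submaximal dimension in the Levi-nondegenerate case recorded in the introduction. Because $\mathfrak{s}(M)$ embeds into $\hol(M,x)$, the hypothesis $\dim\mathfrak{s}(M)>d_0$ rules out the second alternative. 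Hence $M$ must be spherical in a neighborhood of $x$. As $x\in U$ was arbitrary, $M$ is spherical at every Levi-nondegenerate point, which is the assertion.

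The key point distinguishing this statement from Proposition \ref{main1+} is that here the threshold $d_0$ is lower than $n^2+2n+2$, so I cannot invoke Theorem \ref{subalgmaxdim} to pin down a single parabolic subalgebra and thereby a unique signature $(p,q)$. Consequently the conclusion is correspondingly weaker: sphericity holds on all of $U$, but the Levi form may have different signatures on different connected components of $U$, so the uniqueness-of-signature argument used via the non-isomorphism of parabolic subalgebras is simply not available and is not claimed. The proof is genuinely local at each point of $U$ and does not need to glue signature data across components.

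The main obstacle is verifying that the local bound of \cite{K2} can be applied pointwise through the embedding $\mathfrak{s}(M)\hookrightarrow\hol(M,x)$, and that $d_0$ as defined is exactly the relevant submaximal threshold for germs of CR-automorphisms at a Levi-nondegenerate point of arbitrary signature. For $n=1$ the value $d_0=3$ comes from \cite{C,KT} and for $n>1$ the two cases of Levi-definite ($D_{\hbox{\tiny\rm smax}}=n^2+3$) and Levi-indefinite ($D_{\hbox{\tiny\rm smax}}=n^2+4$) must both be dominated by $n^2+4$, which they are; so the uniform choice $d_0=n^2+4$ correctly bounds the non-spherical germ dimension regardless of signature. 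Once this identification is in place, the argument is a direct contrapositive of the local classification and requires no further computation.
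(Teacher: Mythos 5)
Your proof is correct and follows essentially the same route as the paper: both argue by contrapositive, using the injectivity of $\hol(M)\to\hol(M,x)$ at a Levi-nondegenerate point $x$ together with the submaximal bounds of \cite{C}, \cite{K2} to conclude that a non-spherical such point would force $\dim\mathfrak{s}(M)\le d_0$. Your additional remarks on why the uniform threshold $d_0=n^2+4$ dominates both signature cases, and why no fixed-signature conclusion is available below $n^2+2n+2$, are accurate but not needed beyond what the paper's one-line proof already contains.
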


 \begin{proof}
Let $S$ be the Levi-nondegeneracy locus of $M$. If there exists a point of $S$ near which $M$ is not spherical, then, since the natural map $\hol(M)\to\hol(M,x)$ is injective for every $x\in M$, by \cite{C}, \cite{K2} we have $\dim\hol(M)\le d_0$.
 \end{proof}

 \begin{remark}\rm
Concerning Proposition \ref{main1++}, \cite[Example 6.2]{KS1} actually shows that it is possible for the
Levi-nondegeneracy locus $S$ of a real-analytic CR-hypersurface $M$ to be disconnected,
for the signature of the Levi form of $M$ to be different on different connected components of $S$,
and for $M$ to be locally CR-equivalent to different hyperquadrics near different points.
By Proposition \ref{main1+} such an effect is impossible if the algebra $\hol(M)$ has large dimension.

The hypersurface $M\subset\C^3$ from \cite[Example 6.2]{KS1} is given by the equation
$$
\bar{w}=w\left(\frac{i|z_1|^2-\sqrt{1+2i|z_2|^2w-|z_1|^4}}{1+2i|z_2|^2w}\right)^2.
 $$
We found that its symmetry algebra is spanned by the vector fields
 \begin{gather*}
R=2\op{Re}(-z_2\p_{z_2}+2w\p_w),\ S=\op{Re}(iz_1\p_{z_1}),\ J=-2\op{Re}(iz_2\p_{z_2}),\\
X=\op{Re}(iz_1z_2w\p_{z_1}+\p_{z_2}+2iz_2w^2\p_w),\ Y=\op{Re}(z_1z_2w\p_{z_1}+i\p_{z_2}+2z_2w^2\p_w),\\
Z=\op{Re}(z_1w\p_{z_1}+2w^2\p_w).
\end{gather*}
This algebra is isomorphic to $\R^3\oright\mathfrak{heis}_3$, where $R$ is the grading element,
$S$ is the center and $J$ is the complex structure on the contact subspace in
 $$
\mathfrak{heis}_3=\langle X,Y,Z:[X,Y]=Z\rangle.
 $$
We have $[R,X]=X$, $[R,Y]=Y$, $[R,Z]=2Z$, $[J,X]=Y$, $[J,Y]=-X$.
 \end{remark}

\subsection{Group version of the main results}\label{S34}

Let us first note that the situations in (ii) can correspond to the existence of Levi-degenerate points as
in Theorem \ref{main1} (the models are in Example \ref{Ex6}), but the dimension can also drop by
purely topological reasons, reducing the pseudo-unitary group to its subgroup. For instance,
removing from hyperquadric \eqref{QK} a subspace $L^{s-1}$ of dimension $(s-1)$ reduces $PSU(p,q)$
to its maximal parabolic subgroup $P_{s,n-s+2}$.

The global infinitesimal automorphisms are un-altered by this removal of $L^{s-1}$, but some of the vector
fields from $\mathfrak{s}(M)$ become incomplete resulting in reduction of $G$. This is the only global effect
and it is manifested in a remarkably short proof of Theorem \ref{main3} given below.
In fact, it is a simpler statement than that for the global infinitesimal automorphisms since
realization, indicated in the previous paragraph, follows from the very definition of the
parabolic subgroup as the stabilizer of a linear subspace in the projective version of the flat model
and does not appeal to blow-ups.

\medskip

 \begin{proof}[Proof of Theorem \ref{main3}]
Let $\mathfrak{s}$ be the infinitesimal automorphism algebra of $M$ and $\g=\hbox{Lie}(G)$ the Lie algebra of $G$.
Because $\g\subset\mathfrak{s}$ the assumption of case (i) in Theorem \ref{main3} implies
the assumption of case (i) in Theorem \ref{main1} and consequently the implications align.

Consider now case (ii) in Theorem \ref{main3}. If $\g=\mathfrak{s}$ then the implications align again and we are done.
Otherwise $\dim\mathfrak{s}>\dim\g$ and this implies, by Theorem \ref{subalgmaxdim}, that $\dim\mathfrak{s}=n^2+4n+3$,
so we are under the assumption of case (i) in Theorem \ref{main1}, which yields sphericity of $M$ everywhere.
 \end{proof}

\section{Models with large symmetry}\label{S4}

We will now elaborate on CR-hypersurfaces with submaximal symmetry dimension.
First we exhibit a countably many non-equivalent models with the symmetry algebra
being the first parabolic subalgebra ${\mathfrak p}_{1,n+1}$.

Then we realize in two non-equivalent ways all maximal parabolic subalgebras proving Theorem \ref{main2}.
In particular, for $n=2$ we obtain an example of a CR-hypersurface with $\dim\mathfrak{s}(M)=n^2+2n+3=11$
that is more elementary compared to those discussed in \cite{IK2}.
For $n=4$ we get an example of a CR-hypersurface with $\dim\mathfrak{s}(M)=n^2+2n+2=26$;
its algebra $\dim\mathfrak{s}(M)=\mathfrak{p}_3\subset\mathfrak{su}(3,3)$ yields yet another model
with symmetry of the same dimension as $\mathfrak{p}_{1,5}$.

Finally we show other means to produce models with large symmetry: iterated blow-ups and ramified coverings.
In fact, both the series of examples in Section \ref{S41} and those from \cite{IK2} can be seen
as a combination of a blow-up and a ramified covering.

\subsection{A series of different realizations of ${\mathfrak p}_{1,n+1}$}\label{S41}

Fix $n\ge 1$ and $1\le p\le q$ with $p+q=n+2$, and set $(\bar{p},\bar{q})=(p-1,q-1)$. The parabolic subalgebra $\g=\mathfrak{p}_{1,n+1}\subset\mathfrak{su}(p,q)$, which has a $2$-grading $\g=\g_0\oplus\g_1\oplus\g_2$, is abstractly isomorphic to $\g_0\oright\g_+$, where $\g_0=\mathfrak{su}(\bar{p},\bar{q})\oplus\R^2$
%is the special pseudounitary algebra extended by the 2-dimensional center $\mathfrak{gl}(1,\C)$
and $\g_+=\g_1\oplus\g_2=\C^{\bar{p},\bar{q}}\oright\R$ is the Heisenberg algebra of dimension $2n+1$.

For every $m\in\N$ and $\varepsilon=\pm 1$ consider the real-analytic hypersurface $M_{m,\varepsilon}$ given in coordinates $z_1,\dots,z_n$, $w=u+iv$ in $\C^{n+1}$ by
 \begin{equation}
v=\varepsilon u\tan\left(\frac{1}{2m}\arcsin(\|z\|^2)\right),\quad \|z\|<1,\label{exampleMm}
 \end{equation}
where
 $$
\|z\|^2=\sum_{j=1}^n\sigma_j|z_j|^2=|z_1|^2+\dots+|z_{p-1}|^2-|z_p|^2-\dots-|z_n|^2
 $$
is the standard Hermitian form of signature $(\bar{p},\bar{q})$.
Here $\sigma_j=+1$ for $1\leq j\leq\bar{p}$ and $\sigma_{j}=-1$ for $p\leq j\leq n$
(notice that $\sigma_j=-1$ for all $j$ in the Levi-definite case).

For $n=1$ this hypersurface was introduced in \cite{B2} and also appeared in \cite{KL}.
Clearly, $M_{m,\varepsilon}$ contains the complex hypersurface 
$\mathfrak{S}_{m,\varepsilon}=\{\|z\|<1, w=0\}=M_{m,\varepsilon}\cap\{u=0\}$ and is Levi-nondegenerate with signature 
$(\bar p,\bar q)$ away from $\mathfrak{S}_{m,\varepsilon}$. The complement 
$M_{m,\varepsilon}\setminus\mathfrak{S}_{m,\varepsilon}$ has exactly two
connected components; they are defined by the sign of $u$.
The hypersurface $M_{m,\varepsilon}$ is not minimal, hence not of finite type (in the sense of Kohn and Bloom-Graham)
at any point of $\mathfrak{S}_{m,\varepsilon}$ (see \cite[\S 1.5]{BER}).

We now observe that every point $(z,w)\in M_{m,\varepsilon}$ satisfies the equation
 \begin{equation}
\hbox{Im}(w^{2m})\,\sqrt{1-\|z\|^4}=\varepsilon\,\hbox{Re}(w^{2m})\,\|z\|^2.\label{neweq}
 \end{equation}
In fact, for every value of $\varepsilon$, equation (\ref{neweq}) describes $2m$ pairwise CR-equivalent smooth hypersurfaces, with (\ref{exampleMm}) being one of them. The other hypersurfaces are obtained from (\ref{exampleMm}) by multiplying $w$ by a root of order $2m$ of either 1 or -1. One obtains $m$ hypersurfaces from the roots of 1 and the other $m$ ones from the roots of -1 (notice that two opposite roots lead to the same equation). All these hypersurfaces intersect along $\{w=0\}$. For example, when $m=1$ the set described by equation (\ref{neweq}) is the union of the following two smooth hypersurfaces:
 $$
\displaystyle
v=\varepsilon u\tan\left(\frac{1}{2}\arcsin(\|z\|^2)\right)\ \text{ and }\
u=-\varepsilon v\tan\left(\frac{1}{2}\arcsin(\|z\|^2)\right),\ \ \|z\|<1.
 $$

Each of the $2m$ hypersurfaces given by (\ref{neweq}) is spherical away from $\mathfrak{S}_{m,\varepsilon}$. 
Indeed, fix a point $(z_0,w_0)$ satisfying (\ref{neweq}) with $w_0\ne 0$. Then $\hbox{Re}(w_0^{2m})\ne 0$, and setting $\sigma=\varepsilon\,\hbox{sgn}\,\hbox{Re}(w_0^{2m})$, we see that the map
 $$
(z,w)\mapsto (zw^m,\sigma w^{2m})
 $$
transforms a neighborhood of $(z_0,w_0)$ on the relevant hypersurface to an open subset of the hyperquadric
(\ref{QK}) that we rewrite so
 \begin{equation}\label{hyperquadric}
{\mathcal Q}_{\bar p}=\{(z,w)\in\C^n\times\C: v=\|z\|^2\}.
 \end{equation}

 \begin{theorem}\label{Model0}
For every $m\in\N$ and $\varepsilon=\pm 1$ the symmetry algebra of $M_{m,\varepsilon}$ has dimension $n^2+2n+2$;
in fact one has $\mathfrak{s}=\mathfrak{p}_{1,n+1}$. Furthermore, for $m\ne k$ and any $\varepsilon,\delta\in\{-1,1\}$
neither the hypersurfaces $M_{m,\varepsilon}$ and $M_{k,\delta}$ nor their germs at the origin are equivalent by means
of a real-analytic CR-diffeomorphism. In addition, for $p\ne q$ neither the hypersurfaces $M_{m,-1}$ and $M_{m,+1}$
nor their germs at the origin are equivalent by means of a real-analytic CR-diffeomorphism.
%In particular, $M_{m_1,\varepsilon_1}$, $M_{m_2,\varepsilon_2}$ are not CR-equivalent even as smooth CR-manifolds.
 \end{theorem}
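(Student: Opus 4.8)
The plan is to establish three separate assertions in Theorem \ref{Model0}: the symmetry dimension (with identification of the algebra as $\mathfrak{p}_{1,n+1}$), the inequivalence for distinct $m$, and the inequivalence of $M_{m,-1}$ and $M_{m,+1}$ when $p\ne q$. First I would compute the symmetry algebra directly. Equation (\ref{neweq}) exhibits $M_{m,\varepsilon}$ as spherical away from the complex hypersurface $\mathfrak{S}_{m,\varepsilon}=\{w=0\}$, via the explicit map $(z,w)\mapsto(zw^m,\sigma w^{2m})$ onto the hyperquadric ${\mathcal Q}_{\bar p}$. The natural approach is to view $M_{m,\varepsilon}$ as (an open subset of) a blow-up-type model over ${\mathcal Q}_{\bar p}$ and apply Theorem \ref{symBlUp}: the symmetry algebra is the subalgebra of $\mathfrak{s}({\mathcal Q}_{\bar p})=\mathfrak{su}(p,q)$ consisting of fields tangent to the locus corresponding to $\mathfrak{S}_{m,\varepsilon}$. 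Since $\mathfrak{S}_{m,\varepsilon}$ is a complex hypersurface fixed by the covering map, the relevant tangency condition forces the symmetries to fix the point at the origin (and preserve the complex hyperplane $\{w=0\}$), singling out the isotropy parabolic $\mathfrak{p}_{1,n+1}$. I would confirm $\dim\mathfrak{p}_{1,n+1}=n^2+2n+2$ using Proposition \ref{dimmaxparsubalg} with $s=1$. In practice one verifies this by exhibiting the $2n+2$-dimensional grading pieces $\g_0\oright\g_+$ as explicit vector fields on $M_{m,\varepsilon}$ and checking (via the covering and the known symmetries of the quadric) that no further symmetry lifts.

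Next I would handle the inequivalence for $m\ne k$. The key invariant is the \emph{order of non-minimality} (equivalently the degeneracy behaviour) along $\mathfrak{S}_{m,\varepsilon}$. The defining equation (\ref{exampleMm}), after writing $\arcsin(\|z\|^2)=\|z\|^2+O(\|z\|^6)$, shows that near a point of $\mathfrak{S}_{m,\varepsilon}$ the hypersurface meets the holomorphic curve $\{z=\text{const}\}$ with a vanishing of order governed by $2m$: the quantity $\operatorname{Im}(w^{2m})$ appears as the leading holomorphically-invariant object in (\ref{neweq}). Thus the integer $m$ is read off from the order of contact of $M_{m,\varepsilon}$ with the complex hypersurface $\{w=0\}$ along $\mathfrak{S}_{m,\varepsilon}$, or equivalently from the ramification index of the spherical covering map $(z,w)\mapsto(zw^m,\sigma w^{2m})$. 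Since a real-analytic CR-diffeomorphism must preserve the Levi-degeneracy locus $\mathfrak{S}$ and all CR-invariant jet data along it, it preserves this ramification order, so $M_{m,\varepsilon}\cong M_{k,\delta}$ forces $m=k$. I would phrase this precisely as: a local CR-equivalence lifts/descends through the coverings to a local automorphism of ${\mathcal Q}_{\bar p}$ fixing the origin, and compatibility of the two covering degrees $2m$ and $2k$ forces $m=k$.

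Finally, for the sign $\varepsilon$ when $p\ne q$, I would use the signature of the Levi form as the distinguishing invariant. Away from $\mathfrak{S}_{m,\varepsilon}$ the hypersurface has two connected components separated by the sign of $u$; the spherical chart uses $\sigma=\varepsilon\,\operatorname{sgn}\operatorname{Re}(w^{2m})$, so that on the two components the model is CR-equivalent to hyperquadrics of signatures $(\bar p,\bar q)$ and $(\bar q,\bar p)$ respectively. Swapping $\varepsilon$ interchanges which component carries which signature. When $p\ne q$ these signatures are genuinely different, and a CR-diffeomorphism must match a connected component of given Levi signature to one of the same signature; tracking how the components attach along $\mathfrak{S}_{m,\varepsilon}$ (through the common boundary $\{w=0\}$) then obstructs any equivalence between $M_{m,-1}$ and $M_{m,+1}$. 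The main obstacle I anticipate is the first part: rigorously ruling out \emph{extra} symmetries, i.e.\ showing that every infinitesimal symmetry of $M_{m,\varepsilon}$ descends to one of ${\mathcal Q}_{\bar p}$ fixing the origin, rather than merely exhibiting the expected $\mathfrak{p}_{1,n+1}$-worth of fields. This requires care because the covering map is ramified along $\{w=0\}$, so a symmetry upstairs need not a priori be the pullback of a symmetry downstairs; the tangency-to-$\mathfrak{S}_{m,\varepsilon}$ argument of Theorem \ref{symBlUp}, combined with analyticity and the removable-singularity (Hartogs) step, is what closes this gap, and I would lean on it essentially as a black box while checking its hypotheses hold for this ramified setting.
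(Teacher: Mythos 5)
Your overall architecture (exhibit $\mathfrak{p}_{1,n+1}$ explicitly, then find CR invariants separating $m$ and $\varepsilon$) matches the paper, but each of your three steps has a concrete gap. For the symmetry computation, Theorem \ref{symBlUp} does not apply: $M_{m,\varepsilon}$ is not a blow-up $\op{Bl}_LM$ in the sense of Section \ref{S21} but a weighted blow-up of a \emph{ramified cover} of the hyperquadric (this is exactly the content of \eqref{pointsetm}). The descent step in the proof of Theorem \ref{symBlUp} relies on $\pi_L$ being a biholomorphism off $L$ plus Hartogs; the map $(z,w)\mapsto(zw^m,\sigma w^{2m})$ is $2m$-to-one off $\{w=0\}$, so a symmetry upstairs descends at best to a multivalued object, and a symmetry of the quadric fixing the origin need not lift single-valuedly. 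The paper closes the upper bound differently and more cheaply: $M_{m,\varepsilon}$ is not everywhere spherical, so Theorem \ref{main1}(ii) gives $\dim\mathfrak{s}\le n^2+2n+2+\delta_{2,n}$, and the $n=2$ case is excluded because equality there would force $\mathfrak{s}\cong\mathfrak{p}_2$ while $\mathfrak{p}_{1,3}$ (already realized by the explicit fields \eqref{vfrepres}) does not embed into $\mathfrak{p}_2$. The lower bound is the explicit list \eqref{vfrepres}; no descent argument is needed.

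For $m\ne k$, "the equivalence preserves the covering degree" is not a valid invariant as stated: $w\mapsto w^{2m}$ and $w\mapsto w^{2k}$ both present the disc as a ramified cover of the disc, and the total spaces are biholomorphic, so the degree is not determined by the source alone. You would have to prove the covering map is canonically attached to the CR structure, which you do not. The paper instead observes that every field in \eqref{vfrepres} vanishes exactly at the origin (hence any equivalence $F$ satisfies $F(0)=0$) and uses the maximal order of vanishing at the origin of elements of $\mathfrak{s}$, which equals $2m+1$ and is manifestly preserved under $F_*$. Finally, your invariant for the $\varepsilon=\pm1$ case is vacuous: the Levi form of a CR-hypersurface is defined only up to a real scalar, so its signature is the unordered pair $\{\bar p,\bar q\}$, and every component of $M_{m,+1}$ and of $M_{m,-1}$ carries the same signature class (indeed ${\mathcal Q}_{\bar p}$ and ${\mathcal Q}_{\bar q}$ are CR-equivalent via $w\mapsto -w$). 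The genuine obstruction lives in the gluing across $\{w=0\}$: the paper Taylor-expands a putative equivalence at the origin, shows $\Im C=0$ and then $\|z\|^2=-\|Az\|^2$, which forces $A$ to be an anti-isometry of the Hermitian form and is impossible precisely when $p\ne q$. Your "tracking how the components attach" is the right instinct but is not an argument without this computation.
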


 \begin{proof}
It is straightforward to check that the following vector fields span the algebra $\mathfrak{s}(M_{m,\varepsilon})$,
where $1\leq j\leq n$, $j<\ell\leq n$ and summation over repeated indices is not assumed:
 \begin{gather}\label{vfrepres}
\op{Re}(w\p_w),\ \op{Re}\bigl(w^{2m}(m\xi+w\p_w)\bigr),
\vphantom{\frac{a}{a}}\notag\\
\op{Re}(z_j\p_{z_{\ell}}  - \sigma_j\sigma_{\ell} z_{\ell}\p_{z_j}),\
\op{Re}(iz_j\p_{z_{\ell}} +i\sigma_j\sigma_{\ell} z_{\ell}\p_{z_j}),\
\op{Re}(iz_j\p_{z_j}),
\label{vfrepres}\\
\op{Re}\bigl(z_jw^m(m\xi+w\p_w) +im\varepsilon\sigma_j w^m\p_{z_j}\bigr),\
\op{Re}\bigl(iz_jw^m(m\xi+w\p_w) +m\varepsilon\sigma_j w^m\p_{z_j}\bigr),
\vphantom{\frac{a}{a}}\notag
 \end{gather}
with $\xi=\sum_{j=1}^nz_j\p_{z_j}$. Furthermore, one can check that vector fields (\ref{vfrepres}) define a faithful representation of the parabolic subalgebra $\mathfrak{p}_{1,n+1}\subset\mathfrak{su}(p,q)$.

Since the surface $M_{m,\varepsilon}$ is not everywhere spherical, Theorem \ref{main1} yields the upper bound
$\dim\mathfrak{s}(M_{m,\varepsilon})\le n^2+2n+2+\delta_{2,n}$.
Moreover in case $n=2$ the equality is only attained for the parabolic subalgebra
$\mathfrak{p}_2\subset\mathfrak{su}(2,2)$, and since the other parabolic $\mathfrak{p}_{1,3}$
does not embed into $\mathfrak{p}_2$, we conclude that in fact $\dim\mathfrak{s}(M_{m,\varepsilon})\le n^2+2n+2$.
But since vector fields (\ref{vfrepres}) in totality $n^2+2n+2$ constitute the symmetries of the model, we conclude
the opposite inequality and hence $\mathfrak{s}(M_{m,\varepsilon})=\mathfrak{p}_{1,n+1}$.

Similarly, for any connected neighborhood $U$ of a point $x\in\mathfrak{S}_{m,\varepsilon}$ in $M_{m,\varepsilon}$ we have $\hol(U)=\mathfrak{p}_{1,n+1}$, while if $U\cap\mathfrak{S}_{m,\varepsilon}=\emptyset$ we get $\hol(U)=\mathfrak{su}(p,q)$.

Next, formulas (\ref{vfrepres}) show that all elements of $\mathfrak{s}(M_{m,\varepsilon})$ vanish precisely at the origin. Hence, if a real-analytic CR-diffeomorphism $F$ establishes equivalence between $M_{m,\varepsilon}$ and $M_{k,\delta}$, we have $F(0)=0$. Observe now that the highest order of the vanishing of a vector field in the algebra $\mathfrak{s}(M_{m,\varepsilon})$ at the origin is $2m+1$, and this number must be preserved by $F$. This shows that $m=k$. The same argument yields the nonequivalence of the germs of $M_{m,\varepsilon}$ and $M_{k,\delta}$ at the origin by means of a real-analytic CR-diffeomorphism unless $m=k$.

Further, if a real-analytic CR-diffeomorphism $F$ establishes equivalence between $M_{m,-1}$ and $M_{m,+1}$, we have
again $F(0)=0$. Since $F$ holomorphically extends to a neighbourhood of the origin, let us write it as
 $$
(z,w)\mapsto (f(z,w),g(z,w)),
 $$
with
 $$
f(z,w)=Az+Bw+\cdots,\,\,g(z,w)=Cw+\cdots,
 $$
where dots denote higher-order terms and $A,B,C$ are complex matrices of sizes
$n\times n$, $n\times 1$, $1\times 1$, respectively (note that $g$ contains no linear terms in $z$
because $F$ must preserve % the hypersurface
$\mathfrak{S}_{m,\varepsilon}$). The condition that $F$ maps $M_{m,-1}$ to $M_{m,+1}$ is written as the identity
 \begin{equation*}
\left.\left[
\op{Im}\bigl(g(z,w)\bigr)-\op{Re}\bigl(g(z,w)\bigr)\cdot\tan\left(\frac{1}{2m}\arcsin(\|f(z,w)\|^2)\right)
\right]\right|_{w=u- i u\tan(\frac{1}{2m}\arcsin(\|z\|^2))}=0.
 \end{equation*}
The terms linear in $u$ yield $\Im C=0$. Then $\Re C\ne 0$ and the next terms in decomposition of
the above identity imply
 $$
\|z\|^2=-\|Az\|^2.
 $$
Since signature is an invariant of the quadric, for $p\ne q$ this condition is impossible. The same argument yields the nonequivalence of the germs of $M_{m,-1}$ and $M_{m,+1}$ at the origin by means of a real-analytic CR-diffeomorphism.
 \end{proof}

\begin{remark}\label{oaramvarepsilon} \rm
The last statement of Theorem \ref{Model0} does not hold for $p=q$ (just interchange the groups of variables $z_1,\dots,z_{n/2}$ and $z_{n/2+1},\dots,z_n$). The invariance of the pair $(m,\varepsilon)$ for $n=1$ was claimed in \cite{B2}.
\end{remark}

\subsection{Realization of maximal parabolics}\label{S42}

We will now construct realizations of all the maximal parabolic subalgebras $\mathfrak{p}_{s,n-s+2}$, $1\le s\le p$.
 % where $n$ and $p$ are arbitrary.
Finding such realizations is interesting in its own right, as this adds up to
the study of symmetry of polynomial CR models, cf.\ \cite{B1,KMZ,KM}.

The first model has been already introduced in Example \ref{Ex5}, see equation \eqref{Eq-mod1}
applicable to all $1\leq s\leq\frac{n}2+1$.
It is a blow-up \eqref{pi-mod1} of the hyperquadric ${\mathcal Q}_{\bar p}$ % as in (\ref{hyperquadric})
along the subspace $L$ given by \eqref{L-mod1}. Let us denote this model $M^s_{\rm I}$.

Its locus of Levi degeneracy is a complex submanifold ${\mathfrak S}^s_{\rm I}$ of real dimension $2n$.
Indeed, ${\mathfrak S}^s_{\rm I}$ is an open subset of the hyperplane $\{w=0\}$ (coincides with it for $s=1$).

The second model is applicable for $1<s<\frac{n}2+1$, i.e.\ $k=s-1\in(0,\frac{n}2)$.
It is a blow up along the following subspace in $\C^n(z)\times\C(w)$ of dimension $n-k$:
 \begin{equation}\label{L-mod2}
L=\{(z,w)\in\C^{n+1}:z_j=0\ (1\leq j\leq k),\ w=0\}.
 \end{equation}
An important difference between \eqref{L-mod1} and \eqref{L-mod2} is that the latter $L$ is not contained in
${\mathcal Q}_{\bar p}$, so the blow-up happen along the real-analytic subvariety
$L'=L\cap{\mathcal Q}_{\bar p}$.
%  $$
% {\mathcal L}=L\cap{\mathcal Q}_{\bar p}=\{(z,w)\in\C^{n+1}:z_j=0,\,||z'||=0,\,w=0,\,j=1,\dots,k\}.
%  $$
An open subset of $\op{Bl}_L{\mathcal Q}_{\bar p}$ embeds into the hypersurface
$S\subset\C^n(z)\times\C(w)$ given by
 \begin{equation}\label{Eq-mod2}
\op{Im}(w)=\sum_{j=1}^{k}\left(z_jw\bar z_{k+j}+z_{k+j}\bar w\bar z_j\right)+\|z'\|^2
 \end{equation}
($\|z'\|^2$ has the same meaning as in Example \ref{Ex5}) with the projection given by
 \begin{equation}\label{pi-mod2}
\pi_L(z_1,\dots,z_n,w)=(z_1w,\dots,z_kw, z_{k+1},\dots,z_{2k},z_{2k+1},\dots,z_n,w).
 \end{equation}
The hypersurface $S$ contains the real-analytic subvariety ${\mathfrak S}'=\{\|z'\|=0,\,w=0\}=\pi_L^{-1}(L')$,
and for every $x\in L'$ the fiber $\pi_L^{-1}(x)$ is a $k$-dimensional vector subspace of $\C^{n+1}$.
This subvariety has real dimension
 $$
\dim {\mathfrak S}'=\left\{\begin{array}{cl}
2n-1 & \hbox{if $k<\bar p$,}\\ \vspace{-0.3cm}\\
4k<2n & \hbox{if $k=\bar p$.}
\end{array}\right.
 $$
Excluding the singular locus
 $$
\Sigma_{S}=\Bigl\{\sum_{j=1}^{k}z_j\bar z_{j+k}=-\frac{i}{2},\ w=0,\ z_{\ell}=0,\ \ell=2k+1,\dots,n\Bigr\}
 $$
from $S$, we obtain our second model denoted $M^s_{\rm II}$.
Its Levi-degeneracy locus is an open subset ${\mathfrak S}^s_{\rm II}$ of ${\mathfrak S}'$.

 \begin{theorem}\label{realizgenparabolics}
Both models $M^s_{\rm I}$ {\rm(}$1\leq s\leq\frac{n}2+1${\rm)} and $M^s_{\rm II}$ {\rm(}$1<s<\frac{n}2+1${\rm)}
have symmetry algebra $\mathfrak{s}=\mathfrak{p}_{s,n-s+2}$. They are neither globally CR equivalent
nor locally equivalent near the Levi degeneracy locus.
 \end{theorem}

Of course, this assertion implies Theorem \ref{main2}. Note that complementarity of dimensions of $L$
in both cases ($s-1$ and $n-s+1$) reflects certain duality and it gives light to the fact that
both surfaces have the same symmetry algebra.

\medskip

 \begin{proof}
The symmetries of both models are obtained by straightforward but very demanding computations
(involving many {\tt Maple} experiments). For \eqref{Eq-mod1}, denoting
 $$
\zeta=\sum_{j=2k+1}^nz_j\partial_{z_j}-w\partial_w,\,\
\xi=\sum_{a=1}^kz_a\partial_{z_a}+\zeta,\,\
\eta=\sum_{a=1}^kz_{a+k}\partial_{z_{a+k}}+w\partial_w,
 $$
the following are the generators of $\mathfrak{s}(M^s_{\rm I})$ with indices in the range 
$1\le a,b\le k$, $a<c\le k$, $2k+1\le j\le n$, $j<\ell\leq n$:
 \begin{gather}
\Re(\partial_{z_a}+2iz_{a+k}\eta),\
\Re(i\partial_{z_a}+2z_{a+k}\eta),\
\Re(\partial_{z_{a+k}}-2iz_a\xi),
\Re(i\partial_{z_{a+k}}-2z_a\xi),\
 \vphantom{\frac{a}{a}}\notag\\
\Re\bigl(w(\partial_{z_{a+k}}+2iz_a\eta)\bigr),\
\Re\bigl(w(i\partial_{z_{a+k}}+2z_a\eta)\bigr),\
\Re(w\eta),\ \Re(\zeta-w\partial_{w}),
 \notag\\
\Re(iz_aw\partial_{z_{a+k}}),\
\Re(z_a\partial_{z_b}-z_{b+k}\partial_{z_{a+k}}),\
\Re(iz_a\partial_{z_b}+iz_{b+k}\partial_{z_{a+k}}),
 \vphantom{\frac{a}{a}}\notag\\
\Re\bigl(w(z_a\partial_{z_{c+k}}-z_c\partial_{z_{a+k}})\bigr),\
\Re\bigl(w(iz_a\partial_{z_{c+k}}+iz_c\partial_{z_{a+k}})\bigr),
 \label{EQgroup-I}\\
\Re(z_a\partial_{z_j}-\sigma_jz_jw\partial_{z_{a+k}}),\
\Re(iz_a\partial_{z_j}+i\sigma_jz_jw\partial_{z_{a+k}}),
 \vphantom{\frac{a}{a}}\notag\\
\Re(z_j\partial_{z_{\ell}}-\sigma_j\sigma_{\ell}z_{\ell}\partial_{z_j}),\
\Re(iz_j\partial_{z_{\ell}}+i\sigma_j\sigma_{\ell}z_{\ell}\partial_{z_j}),\
\Re(iz_j\partial_{z_j}),
 \notag\\
\Re(\partial_{z_j}+2i\sigma_jz_jw\eta),\
\Re(i\partial_{z_j}+2\sigma_jz_jw\eta).
 \vphantom{\frac{a}{a}}\notag
 \end{gather}

Similarly, for \eqref{Eq-mod2}, if we denote
 $$
\zeta=\sum_{j=2k+1}^nz_j\partial_{z_j}+w\partial_w,\,\
\xi=\sum_{a=1}^kz_{a+k}\partial_{z_{a+k}}+\zeta,\,\
\eta=\sum_{a=1}^kz_a\partial_{z_a}-w\partial_w,
 $$
and use the same range for indices, then the following are the generators of $\mathfrak{s}(M^s_{\rm II})$:
 \begin{gather}
\Re(\partial_{z_a}+2iz_{a+k}\xi),\
\Re(i\partial_{z_a}+2z_{a+k}\xi),\
\Re(\partial_{z_{a+k}}-2iz_a\eta),\
\Re(i\partial_{z_{a+k}}-2z_a\eta),
 \vphantom{\frac{a}{a}}\notag\\
\Re(w\bigl(\partial_{z_{a+k}}+2iz_a\xi)\bigr),\
\Re(w\bigl(i\partial_{z_{a+k}}+2z_a\xi)\bigr),\
\Re(w\xi),\
\Re(\zeta+w\partial_{w}),
 \notag\\
\Re(iz_aw\partial_{z_{a+k}}),\
\Re(z_a\partial_{z_b}-z_{b+k}\partial_{z_{a+k}}),\
\Re(iz_a\partial_{z_b}+iz_{b+k}\partial_{z_{a+k}}),
 \vphantom{\frac{a}{a}}\notag\\
\Re(w\bigl(z_a\partial_{z_{c+k}}-z_c\partial_{z_{a+k}})\bigr),\
\Re(w\bigl(iz_a\partial_{z_{c+k}}+iz_c\partial_{z_{a+k}})\bigr),
 \label{EQgroup-II}\\
\Re(z_j\partial_{z_{a+k}}-\sigma_jz_aw\partial_{z_j}),\
\Re(iz_j\partial_{z_{a+k}}+i\sigma_jz_aw\partial_{z_j}),
 \vphantom{\frac{a}{a}}\notag\\
\Re(z_j\partial_{z_{\ell}}-\sigma_j\sigma_{\ell}z_{\ell}\partial_{z_j}),\
\Re(iz_j\partial_{z_{\ell}}+i\sigma_j\sigma_{\ell}z_{\ell}\partial_{z_j}),\
\Re(iz_j\partial_{z_j}),
 \notag\\
\Re(w\partial_{z_j}+2i\sigma_jz_j\xi),\
\Re(iw\partial_{z_j}+2\sigma_jz_j\xi).
 \vphantom{\frac{a}{a}}\notag
 \end{gather}

Finally note that the CR-manifolds $M^s_{\rm I}$ and $M^s_{\rm II}$ for $1<s<\frac{n}2+1$
are not equivalent even by means of a smooth CR-diffeomorphism. Indeed, any such diffeomorphism must preserve
the points of Levi-degeneracy and therefore map ${\mathfrak S}^s_{\rm I}$ onto ${\mathfrak S}^s_{\rm II}$,
which is impossible since they have different dimensions. Similarly, for any point $x'\in M^s_{\rm I}$ and
any point $x''\in M^s_{\rm II}$ the germs of $(M^s_{\rm I},x')$ and $(M^s_{\rm II},x'')$
are not equivalent by means of a smooth CR-diffeomorphism.
 \end{proof}

 \begin{remark}\label{s1new}\rm
The CR-manifold $M^1_{\rm I}$ with symmetry algebra $\mathfrak{s}=\mathfrak{p}_{1,n+1}$ is not equivalent to
any of the hypersurfaces $M_{m,\varepsilon}$ introduced in (\ref{exampleMm}) by means of a real-analytic
CR-diffeomorphism. Indeed, any such diffeomorphism must preserve the points of Levi-degeneracy and therefore
map ${\mathfrak S}^1_{\rm I}$ onto $\mathfrak{S}_{m,\varepsilon}$. On the other hand, formula \eqref{EQgroup-I}
 % from the proof of Theorem \ref{realizgenparabolics}
shows that the subspace of vector fields in $\mathfrak{s}(M^1_{\rm I})$ identically vanishing
on ${\mathfrak S}^1_{\rm I}$ is 1-dimensional, whereas by formulas (\ref{vfrepres}) the subspace
of vector fields in $\mathfrak{s}(M_{m,\varepsilon})$ identically vanishing on $\mathfrak{S}_{m,\varepsilon}$
has dimension $2n+2$ for any $m\in\N$, $\varepsilon=\pm 1$.

The same argument demonstrates that for any point $x\in{\mathfrak S}^1_{\rm I}$ and any point
$y\in\mathfrak{S}_{m,\varepsilon}$ the germs of $(M^1_{\rm I},x)$ and $(M_{m,\varepsilon},y)$ are not equivalent by 
means of a real-analytic CR-diffeomorphism for all $m\in\N$, $\varepsilon=\pm 1$.
 \end{remark}

 \begin{remark}\rm
Note that the spherical surface $M^1_{\rm I}\setminus\{w=0\}$ is globally equivalent to the spherical surface
$M^1_{\rm II}\setminus\{w=0\}$: the CR-diffeomorphism is given by
 \begin{multline*}
(z_1,\dots,z_k,z_{k+1},\dots,z_{2k},z_{2k+1},\dots,z_n,w)\mapsto \\ (-z_{k+1},\dots,-z_{2k},z_1,\dots,z_k,z_{2k+1},\dots,z_n,-\tfrac1w).
 \end{multline*}
This map however does not induce a transformation of \eqref{EQgroup-I} to \eqref{EQgroup-II}.
Instead it maps the parabolic subalgebra $\mathfrak{p}_{s,n-s+2}$ in $\mathfrak{su}(p,q)$
to the opposite parabolic $\mathfrak{p}^\text{op}_{s,n-s+2}$ signifying the above-mentioned duality.

\smallskip

Let us also note that other blow-ups that mix \eqref{Eq-mod1} and \eqref{Eq-mod2} like
 $$
\op{Im}(w)=\sum_{j=1}^{k}\left(z_jw\bar z_{k+j}+z_{k+j}\bar z_j\bar w\right)
+\sum_{j=2k+1}^{r}\sigma_j|z_a|^2+|w|^2\cdot\!\!\sum_{j=r+1}^n\sigma_j|z_a|^2
 $$
for $2k+1<r<n$ have the symmetry algebra strictly smaller in dimension than $\mathfrak{p}_{s,n-s+2}$;
this can be explained by Theorem \ref{symBlUp}.
 \end{remark}

\subsection{Other examples}\label{S43}

The above blow-up procedure admits a useful modification, which we will only discuss for the case when $L$ is a point. The idea is to consider, instead of the blow-up of hyperquadric (\ref{hyperquadric}) at the origin, a \lq\lq weighted\rq\rq\, blow up of a ramified cover over it.

 \begin{examp}\rm
For $r\in\N$ and $\sigma=\pm 1$ consider the map $\psi_{r,\sigma}(z,w)=(z,\sigma w^r)$.
The ramified cover $\widetilde{\mathcal Q}_{\bar p}=\psi_{r,\sigma}^{-1}({\mathcal Q}_{\bar p})$ is the hypersurface in $\C^{n+1}$ given by the equation
 $$
\Im (w^r)=\sigma\,\|z\|^2.
 $$
Clearly, $\widetilde{\mathcal Q}_{\bar p}$ is singular if $r>1$. We will now blow up $\widetilde{\mathcal Q}_{\bar p}$ at the origin by a weighted analogue of map $\pi_L$ with $L=o$, namely, by the map
 $$
\pi_{o,m}:(z,w)\mapsto (zw^m,w),\label{simpleblowupmapk}
 $$
where $m\in\N$. The result of the blow-up is the hypersurface
$R_{r,m}=\pi_{o,m}^{-1}(\widetilde{\mathcal Q}_{\bar p})$, which is described by the equation
 \begin{equation}\label{ramifcoverring}
\Im(w^r)=\sigma|w|^{2m}\,\|z\|^2.
 \end{equation}

Fix $m\in\N$ and set $r=2m$. One can rewrite equation (\ref{ramifcoverring}) of $R_{2m,m}$ as
 $$
\hbox{Im}(w^{2m})=\sigma\sqrt{\hbox{Re}(w^{2m})^2+\hbox{Im}(w^{2m})^2}\,\|z\|^2.
 $$
Taking squares, we obtain
$\hbox{Im}(w^{2m})^2\,(1-\|z\|^4)=\hbox{Re}(w^{2m})^2\,\|z\|^4$, i.e.\
 \begin{equation}
\hbox{Im}(w^{2m})\,\sqrt{(1-\|z\|^4)}=\sigma|\hbox{Re}(w^{2m})|\cdot\|z\|^2.\label{pointsetm}
 \end{equation}
The pair of equations in (\ref{pointsetm}) for $\sigma=\pm 1$ describes the same set of points as the pair of equations in (\ref{neweq}) for $\varepsilon=\pm 1$. This set is formed by $4m$ smooth hypersurfaces all intersecting along $w=0$.
Otherwise said, the models of Theorem \ref{Model0} can be considered as a ramified covering of a weighted blow-up.
 \end{examp}

 \begin{remark}\label{tmp43}\rm
For $n=1$ the weighted blow-up $R_{2m,m}$ of a ramified cover over the hyperquadric was
considered in \cite{KL}, see, e.g., Lemmas 22, 26 therein.
 \end{remark}

Let us note that surface \eqref{ramifcoverring} for $r=1$ is an iterated blow-up of the hyperquadric
\eqref{QK}. In fact, $o=(0,0)\in\C^n(z)\times\C(w)$ is the regular point of the surface
 $$
Q_m=\{\op{Im}(w)=|w|^{2m}\,\|z\|^2\}.
 $$
Note that $Q_0=\mathcal{Q}_{\bar{p}}$, where $\bar{p}=p-1$ and $(\bar{p},\bar{q})$ is the
signature of the quadric $\|z\|^2$ ($\bar{p}+\bar{q}=n$), and that $\overline{Q_{j+1}}=\op{Bl}_oQ_j$ for all $j\ge0$.

By Theorem \ref{symBlUp} the symmetry algebra of $Q_1$ is obtained from that of $Q_0$ as stabilizer of $o$ in
$\mathfrak{su}(p,q)$, the resulting algebra of vector fields has generators \eqref{EQgroup-I} for $k=0$.
This is the reduction to the parabolic subalgebra $\mathfrak{p}_{1,n+1}$.

Again, using Theorem \ref{symBlUp} the symmetry algebra of $Q_2$ is obtained from that of $Q_1$ as stabilizer of $o$
in the algebra $\mathfrak{p}_{1,n+1}$. This is obtained by removing the vector fields in the last line of
\eqref{EQgroup-I} for $k=0$. Further blow-ups do not change the dimension (only some coefficients are being modified),
and we conclude $\dim\mathfrak{s}(Q_m)=n^2+2$.

Actually, this dimension persists for the symmetry algebra of the ramified equation.

 \begin{prop}
The symmetry algebra of \eqref{ramifcoverring} for $m>1$ is $\mathfrak{u}(\bar{p},\bar{q})\oplus\mathfrak{sol}(2)$,
where $\mathfrak{sol}(2)$ is the two-dimensional solvable Lie algebra.
 \end{prop}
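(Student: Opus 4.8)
The plan is to compute the symmetry algebra of the ramified hypersurface $R_{2m,m}$ defined by \eqref{ramifcoverring} directly, using the relationship between its symmetries and those of the already-understood weighted blow-ups $Q_j$. First I would observe that equation \eqref{ramifcoverring} with $r=2m$ differs from $Q_m$ only by the replacement of $\op{Im}(w)$ with $\op{Im}(w^{2m})$, i.e.\ by the ramified covering $\psi_{2m,\sigma}(z,w)=(z,\sigma w^{2m})$ in the $w$-variable. Since we have already determined via Theorem \ref{symBlUp} and the iterated blow-up argument that $\dim\mathfrak{s}(Q_m)=n^2+2$ with the algebra generated by \eqref{EQgroup-I} (taken at $k=0$, with the last line removed after the second blow-up), the strategy is to track which of these fields survive the pullback under the ramified cover and to identify the resulting abstract Lie algebra.

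The key steps, in order, are as follows. First, I would write down the candidate symmetry fields explicitly. The rotations and the $\mathfrak{u}(\bar p,\bar q)$ part act only on the $z$-variables and commute with the ramification in $w$, so they should persist unchanged, giving a subalgebra isomorphic to $\mathfrak{u}(\bar p,\bar q)$ of dimension $n^2+1$. Second, I would examine the fields involving $w\partial_w$. The essential point is that the ramified covering $w\mapsto w^{2m}$ breaks the full affine action on $w$ that is available for $Q_m$: only those vector fields whose flow respects the $2m$-fold cover descend. Concretely, the scaling field $\op{Re}(w\partial_w)$ survives (it rescales $w$ and hence $w^{2m}$ homogeneously), but the translation-type field that for $Q_m$ comes paired in the $\mathfrak{sl}(2)$-triple on the $w$-line is obstructed by ramification. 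What remains on the $w$-factor is a two-dimensional solvable algebra $\mathfrak{sol}(2)=\langle a,b : [a,b]=b\rangle$, spanned by the grading/scaling field and one residual nilpotent direction. Third, I would verify that there are no cross-terms coupling the $z$- and $w$-directions beyond this splitting, so that the algebra is a direct sum $\mathfrak{u}(\bar p,\bar q)\oplus\mathfrak{sol}(2)$ of the asserted dimension $n^2+1+2=n^2+3$; I would then cross-check this count against the persistence claim $\dim\mathfrak{s}(Q_m)=n^2+2$ and the remark that the ramified equation only modifies coefficients, confirming the stated dimension.

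For the upper bound I would argue that no further symmetries can appear. Away from $\{w=0\}$ the surface $R_{2m,m}$ is spherical (it is a local biholomorphic image of the hyperquadric, just as for the models of Theorem \ref{Model0}), and $\{w=0\}$ is precisely its Levi-degeneracy locus; $R_{2m,m}$ is not everywhere spherical, so Theorem \ref{main1} already caps $\dim\mathfrak{s}$ well below $n^2+4n+3$. To pin the dimension exactly, I would use the germ-level rigidity: any CR-symmetry must preserve $\{w=0\}$ and hence descends to a symmetry of the blow-down picture, where the multivaluedness of the covering forces the $w$-action to lie in $\mathfrak{sol}(2)$. Matching the surviving fields with the explicit list then gives equality.

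The hard part will be the precise bookkeeping of how ramification selects the surviving subalgebra on the $w$-factor — in particular showing that the obstructed generators genuinely fail to lift (rather than merely changing form) and that the residual structure is exactly $\mathfrak{sol}(2)$ and not a larger solvable algebra. I expect this to hinge on a careful monodromy/singularity analysis near $\{w=0\}$, analogous to the removable-singularity and Hartogs arguments used in the proof of Theorem \ref{symBlUp}, combined with the order-of-vanishing invariant exploited in the proof of Theorem \ref{Model0}. The $\mathfrak{u}(\bar p,\bar q)$ summand and the direct-sum splitting should be comparatively routine once the $w$-factor is understood.
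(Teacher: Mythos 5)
Your overall strategy---bound the dimension from above using the preceding blow-up analysis and then exhibit explicit generators---is the same as the paper's, which simply remarks that the dimension has already been restricted from above and then lists the $n^2+2$ vector fields. But your execution contains a genuine error that would derail the argument: $\dim\mathfrak{u}(\bar p,\bar q)=(\bar p+\bar q)^2=n^2$, not $n^2+1$, so the asserted algebra $\mathfrak{u}(\bar p,\bar q)\oplus\mathfrak{sol}(2)$ has dimension $n^2+2$, not $n^2+3$. This is not cosmetic: the cross-check you propose against the persistence claim $\dim\mathfrak{s}(Q_m)=n^2+2$ would then yield a contradiction rather than a confirmation, since $n^2+3$ exceeds the very upper bound you are invoking, and the proof cannot close as written. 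Relatedly, the upper bound you actually justify (via Theorem \ref{main1}, namely $n^2+2n+2+\delta_{2,n}$) is far too weak to pin the answer at $n^2+2$; the sharp bound comes from the iterated blow-up computation via Theorem \ref{symBlUp} carried out just before the proposition, and your ``germ-level rigidity'' step, which is supposed to bridge that gap for the ramified cover, is left entirely unargued.

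The second gap is that you never produce the two generators of the $\mathfrak{sol}(2)$ factor, and your qualitative description of them is off. In the paper they are $\Re\bigl(w\partial_w-(m-\tfrac r2)\sum_j\sigma_jz_j\partial_{z_j}\bigr)$ and $\Re\bigl(w^r(w\partial_w-(m-r)\sum_j\sigma_jz_j\partial_{z_j})\bigr)$: neither is supported purely ``on the $w$-factor'' (each carries a dilation component in $z$ whose weight is forced by the bihomogeneity of \eqref{ramifcoverring}), and the second is not a residual nilpotent direction left over from an obstructed translation but a weight-$r$ field with $[a,b]=rb$, which is what makes the two-dimensional factor isomorphic to $\mathfrak{sol}(2)$. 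The monodromy/removable-singularity analysis you defer to the end is precisely where the content of the lower bound lies; the paper sidesteps it by writing the fields down and checking tangency directly. Without that explicit verification, and with the dimension count corrected, your argument establishes neither the lower bound nor the isomorphism type.
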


 \begin{proof}
Since we already restricted the dimension from above, it is enough to indicate the generators.
They are given below with the index range $1\le\ell\le n$, $\ell<j\le n$:
 \begin{gather*}
\Re(z_{\ell}\partial_{z_j}-\sigma_{\ell}\sigma_jz_j\partial_{z_{\ell}}),\
\Re(iz_{\ell}\partial_{z_j}+i\sigma_{\ell}\sigma_jz_j\partial_{z_{\ell}}),\
\Re(iz_{\ell}\partial_{z_{\ell}}),\\
\Re\bigl(w\partial_w-(m-\tfrac{r}2)\sum_{j=1}^n\sigma_jz_j\partial_{z_j}\bigr),\
\Re\bigl(w^r(w\partial_w-(m-r)\sum_{j=1}^n\sigma_jz_j\partial_{z_j})\bigr).
 \end{gather*}
The abstract Lie algebra structure is straightforward.
 \end{proof}

Finally, let us show an example of iterated blow-up giving a surface with a non-maximal
parabolic symmetry algebra. We do it in the simplest case $n=2$ with parabolic
$\mathfrak{p}_{1,2,3}$ in $\mathfrak{su}(2,2)$ being the Borel subalgebra.

 \begin{examp}\label{p123}\rm
If we blow up the hyperquadric $\op{Im}(w)=2\Re(z_1\bar{z_2})$ in $\C^3$
along $L_1=\{z_1=0=w\}$ via $\pi_{L_1}(z_1,z_2,w)=(z_1w,z_2,w)$ we get the surface
 \begin{equation}\label{e:p2}
\op{Im}(w)=2\Re(z_1w\bar{z_2})
 \end{equation}
with the symmetry algebra $\mathfrak{p}_2\subset\mathfrak{su}(2,2)$. Blowing it up
again along $L_1=\{z_1=0=w\}$ (in new coordinates) we get the surface
 \begin{equation}\label{e:p123}
\op{Im}(w)=2\Re(z_1w^2\bar{z_2}).
 \end{equation}

 \begin{prop}
The symmetry algebra of \eqref{e:p123} is $\mathfrak{p}_{1,2,3}\subset\mathfrak{su}(2,2)$.
 \end{prop}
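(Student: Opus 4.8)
The plan is to read off the symmetry algebra directly from Theorem \ref{symBlUp}, exactly as for the single blow-up \eqref{e:p2}. By construction the surface \eqref{e:p123} is the CR-blow-up of \eqref{e:p2} along $L_1=\{z_1=0=w\}$. Here $L_1$ is contained in \eqref{e:p2} and is tangent to its CR-distribution there, so we are in the case $L'=L$ of Theorem \ref{symBlUp}; since \eqref{e:p2} is connected and Levi-nondegenerate on the dense set $\{w\neq0\}$, the theorem applies with no extra codimension hypothesis and identifies the symmetry algebra of \eqref{e:p123} with the subalgebra of $\mathfrak{p}_2$ consisting of those fields that are tangent to $L_1$. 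This is exactly the delicate iterated situation where, as noted earlier, the symmetry can drop beyond naive expectation: the second centre $L_1$ sits inside the Levi-degeneracy locus $\{w=0\}$ of \eqref{e:p2}, so the reduction must be computed rather than guessed.

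By Theorem \ref{realizgenparabolics} the symmetry algebra of \eqref{e:p2} is $\mathfrak{p}_2$, spanned by the eleven fields obtained from \eqref{EQgroup-I} at $n=2$, $k=1$ (so that $\xi=z_1\partial_{z_1}-w\partial_w$ and $\eta=z_2\partial_{z_2}+w\partial_w$). A field $\Re(A\partial_{z_1}+B\partial_{z_2}+C\partial_w)$ is tangent to the complex submanifold $L_1=\{z_1=w=0\}$ precisely when the holomorphic coefficients $A$ and $C$ vanish on $L_1$. Inspecting the eleven generators, I expect exactly two to fail this test, namely the two $z_1$-translations in the first line of \eqref{EQgroup-I}, whose $\partial_{z_1}$-coefficient is a nonzero constant; in every remaining generator the $\partial_{z_1}$- and $\partial_w$-coefficients are divisible by $z_1$ or by $w$ and hence vanish along $L_1$. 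This leaves a $9$-dimensional subalgebra $\mathfrak{h}\subset\mathfrak{p}_2$, which by Theorem \ref{symBlUp} is the sought symmetry algebra.

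It remains to identify $\mathfrak{h}$ with $\mathfrak{p}_{1,2,3}$. The dimensions match: the minimal parabolic of $\mathfrak{su}(2,2)$ has real rank $2$ and restricted root system $C_2$ with multiplicities $(2,2,1,1)$, whence $\dim\mathfrak{n}=6$, $\dim\mathfrak{a}=2$, $\dim\mathfrak{m}=1$ and $\dim\mathfrak{p}_{1,2,3}=9$ (equivalently, the counting of Proposition \ref{dimmaxparsubalg} with all three nodes crossed). Since no other proper parabolic of $\mathfrak{su}(2,2)$ has dimension $9$ (the parabolics $\mathfrak{p}_2$ and $\mathfrak{p}_{1,3}$ have dimensions $11$ and $10$), it suffices to show that $\mathfrak{h}$ is parabolic, and then $\mathfrak{h}=\mathfrak{p}_{1,2,3}$. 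I would establish this in the projective quadric model, where $\mathfrak{p}_2$ is the stabilizer of a maximal null $2$-plane $W\subset\C^4$: tangency to $L_1$ imposes the additional stabilization of an isotropic line inside $W$, that is, of a complete isotropic flag, whose stabilizer is the minimal parabolic. The main obstacle is precisely this translation of the analytic tangency condition into the correct refinement of the flag fixed by $\mathfrak{p}_2$; should it prove awkward, the identification can instead be forced by a direct bracket computation verifying that the nine fields close into a subalgebra containing a Cartan subalgebra together with a full positive nilradical of $\mathfrak{su}(2,2)$, which with the dimension count pins $\mathfrak{h}$ down as $\mathfrak{p}_{1,2,3}$.
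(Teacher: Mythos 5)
Your reduction has a genuine gap at its very first step: Theorem \ref{symBlUp} computes the symmetry algebra of the \emph{full} blow-up $\op{Bl}_{L_1}M$ of $M$ given by \eqref{e:p2}, whereas the hypersurface \eqref{e:p123} is only one affine chart of that blow-up, i.e.\ a proper open dense subset (the full blow-up also contains the points of the other chart lying over $\{(z_1,z_2,0):z_1\neq 0\}$). For an open subset the symmetry algebra can be strictly larger than that of the ambient manifold, and this is not a hypothetical worry here: Example \ref{iterative} of the paper, treating the sibling chart \eqref{e:p13} of the blow-up of \eqref{e:p2} along $L_2=\{z_2=0=w\}$, shows that the chart has symmetry algebra $\mathfrak{p}_{1,3}$ of dimension $10$, which is not even contained in $\mathfrak{p}_2$, while the stabilizer predicted by Theorem \ref{symBlUp} for the full blow-up is only $8$-dimensional. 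So your sentence asserting that the theorem \lq\lq identifies the symmetry algebra of \eqref{e:p123} with the subalgebra of $\mathfrak{p}_2$ tangent to $L_1$\rq\rq\ is precisely the step that fails for \eqref{e:p13}, and it needs a separate justification for \eqref{e:p123}. What your argument correctly establishes is the inclusion of $\mathfrak{p}_{1,2,3}$ into the symmetry algebra of \eqref{e:p123} (the nine fields tangent to $L_1$ do lift) together with the value of the symmetry algebra of the full blow-up; the upper bound for the chart is missing.

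The paper closes this gap by direct computation: it exhibits the nine generators explicitly (obtained by solving the determining equations for the hypersurface \eqref{e:p123} itself) and checks that they satisfy the structure equations of the Borel subalgebra; the remark about the stabilizer of $L_1$ in $\mathfrak{p}_2$ is only an a posteriori consistency check against Theorem \ref{symBlUp}. Your stabilizer computation inside \eqref{EQgroup-I} (only the two $z_1$-translations fail tangency, leaving a $9$-dimensional subalgebra) and your identification of the resulting $9$-dimensional parabolic with $\mathfrak{p}_{1,2,3}$ (the proper parabolics of $\mathfrak{su}(2,2)$ have dimensions $11$, $10$, $9$) are both correct. But to complete the proof along your lines you would still have to show that every symmetry of the chart \eqref{e:p123} descends to \eqref{e:p2} and is tangent to $L_1$ there --- that is, rule out the phenomenon of Example \ref{iterative}, which hinges on exactly which part of the Levi-degeneracy hyperplane $\{w=0\}$ is missing from the image of the projection --- or else compute the symmetry algebra of \eqref{e:p123} directly, as the paper does.
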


 \begin{proof}
The symmetry algebra has generators:
 \begin{gather*}
\Re(3z_1\p_{z_1}-z_2\p_{z_2}-2w\p_w),\
\Re\bigl(w(z_1\p_{z_1}-z_2\p_{z_2}-w\p_w)\bigr),\\
\Re\bigl(\p_{z_2}-2iz_1w(2z_1\p_{z_1}-w\p_w)\bigr),\ \Re\bigl(i\p_{z_2}-2z_1w(2z_1\p_{z_1}-w\p_w)\bigr),\\
\Re\bigl(w\p_{z_2}-2iz_1w^2(z_1\p_{z_1}-z_2\p_{z_2}-w\p_w)\bigr),\
\Re\bigl(iw\p_{z_2}-2z_1w^2(z_1\p_{z_1}-z_2\p_{z_2}-w\p_w)\bigr),\\
\Re(z_1\p_{z_1}-z_2\p_{z_2}),\ \Re(iz_1\p_{z_1}+iz_2\p_{z_2}),\
\Re(iz_1w^2\p_{z_2}).
 \end{gather*}
They satisfy the structure equations of the Borel subalgebra in $\mathfrak{su}(2,2)$.
In fact, the stabilizer of $L_1$ in 11-dimensional symmetry algebra of \eqref{e:p2} is
the indicated 9-dimensional subalgebra $\mathfrak{p}_{1,2,3}\subset\mathfrak{p}_2$,
in accordance with Theorem \ref{symBlUp}.
 \end{proof}
 \end{examp}

 \begin{examp}\label{iterative}\rm
We remark that (iteratively) blowing up surface $M$ \eqref{e:p2} again along $L_2=\{z_2=0=w\}$
via $\pi_{L_2}(z_1,z_2,w)=(z_1,z_2w,w)$ we get the surface
 \begin{equation}\label{e:p13}
\op{Im}(w)=2\Re(z_1\bar{z_2})\,|w|^2
 \end{equation}
with the symmetry algebra $\mathfrak{p}_{1,3}\subset\mathfrak{su}(2,2)$.
This seems to contradict Theorem \ref{symBlUp} because $\mathfrak{p}_{1,3}$ does not embed into $\mathfrak{p}_2$.

The explanation is as follows. The surface \eqref{e:p13} is only an open dense part of the blow-up of
\eqref{e:p2} along $L_2$. The projection $\pi_{L_2}$ is not epimorphic on $M$, the subset
$\{(z_1,z_2,0):z_2\neq0\}$ is not in the image of $\pi_{L_2}$ restricted to \eqref{e:p13}.
This implies that the symmetry of $M$ fixing this subset and $L_1$ (as per definition)
also fixes their intersection, i.e.\ the point $o=(0,0,0)$ leading to the symmetry algebra $\mathfrak{p}_{1,3}$.

The blow-up $\op{Bl}_{L_2}M$ of the surface $M$ \eqref{e:p2} is obtained from
 $$
M'=\{\op{Im}(w')=2\Re(z'_1\bar{z}'_2)\,|w'|^2\}\ \ \text{ and }\ \
M''=\{\op{Im}(z_2''w'')=2\Re(z''_1w'')\,|z_2''|^2\},
 $$
with singular lines $\{(z''_1,0,0)\}$ and $\{(z''_1,\frac{i}{2\bar{z}_1''},0):z_1''\neq0\}$ removed from $M''$,
by gluing them via $\varphi(z_1',z_2',w')=(z_1'',z_2'',w'')=(z_1',z_2'w',1/z_2')$. And this
bigger surface gives reduction of the symmetry to an 8-dimensional subalgebra
of both $\mathfrak{p}_{1,3}$ and $\mathfrak{p}_2$.
 \end{examp}

\section{Conclusion}\label{S5}

Let us outline a possible generalization and formulate some open problems.

\subsection{On generalization of the main result}\label{S51}

Motivated by results in the present paper and a series of preceding works in complex analysis,
we formulate the following claim, generalizing Conjecture~\ref{beloshapka}.

 \begin{conjecture}\label{new-conjecture}
Symmetry of any real-analytic connected holomorphically nondegenerate CR-hypersurface $M$ of CR-dimension $n$
satisfies $\dim\mathfrak{s}(M)\le n^2+4n+3$, with the maximal value attained only if $M$ is
everywhere spherical. Otherwise $\dim\mathfrak{s}(M)\le n^2+2n+2+\delta_{2,n}$, with the maximal value
attained only if on a dense open set $M$ is spherical and of fixed signature of the Levi form.
 \end{conjecture}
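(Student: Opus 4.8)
The plan is to reduce Conjecture~\ref{new-conjecture} to Theorem~\ref{main1} wherever possible and to isolate the genuinely new content. If the Levi-nondegeneracy locus of $M$ is nonempty, then $M$ satisfies the hypotheses of Theorem~\ref{main1}, which already yields both the maximal bound $\dim\mathfrak{s}(M)\le n^2+4n+3$ with sphericity everywhere in the equality case, and the submaximal bound $\dim\mathfrak{s}(M)\le n^2+2n+2+\delta_{2,n}$ with fixed-signature sphericity on the Levi-nondegeneracy locus in its equality case. Since this conclusion is sharper than what the conjecture asserts, the conjecture is proved for such $M$ outright. The entire remaining problem is therefore the case in which $M$ is holomorphically nondegenerate but \emph{Levi-degenerate at every point}.

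In that case I would first exploit holomorphic nondegeneracy: by \cite[Theorem 11.5.1]{BER} the hypersurface is finitely nondegenerate outside a proper analytic subset, so on a dense open subset $U\subset M$ it is uniformly $k$-nondegenerate for some integer $k\ge 2$, with $k$ and the associated rank data constant on a smaller dense open set. Such an $M$ is nowhere spherical, so it falls automatically into the ``otherwise'' alternative, and the goal collapses to the single inequality $\dim\mathfrak{s}(M)\le n^2+2n+2+\delta_{2,n}$. Because $\mathfrak{s}(M)\hookrightarrow\hol(M,x)$ for any $x\in U$, it suffices to bound $\dim\hol(M,x)$ at a point of uniform $k$-nondegeneracy. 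The natural route is to attach to the uniformly $k$-nondegenerate germ an absolute parallelism (a Tanaka/Cartan-type prolongation of the reduced structure), so that $\dim\hol(M,x)$ is bounded by the dimension of the resulting structure bundle; this is exactly the mechanism behind the bound for $n=2$, $k=2$ in \cite{IZ}, and it is the piece I would try to generalize.

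The decisive step is then to verify that the maximally symmetric uniformly $k$-nondegenerate model in CR-dimension $n$ has symmetry dimension at most $n^2+2n+2+\delta_{2,n}$. Here the blow-up philosophy of the present paper suggests a strategy: many $k$-nondegenerate homogeneous models arise as (iterated) blow-downs of the hyperquadric, and for every model of the form $\op{Bl}_L{\mathcal Q}$ Theorem~\ref{symBlUp} identifies its symmetry with a proper subalgebra of $\mathfrak{su}(p,q)$, to which Theorem~\ref{subalgmaxdim} applies and yields precisely the bound $n^2+2n+2$ (respectively $11$ for $n=2$). I would attempt to show that every uniformly $k$-nondegenerate model of submaximal-or-larger symmetry is of this blow-down type, thereby routing the estimate through the already-proven Lie-theoretic bound; the fixed-signature and generic-sphericity statements would then follow from the sphericity of the blown-down quadric, as in Section~\ref{S42}.

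The main obstacle is that no uniform structure theory for $k$-nondegenerate CR-hypersurfaces of arbitrary CR-dimension $n$ and arbitrary degeneracy order $k$ is available: the construction of a canonical absolute parallelism is known only in low-dimensional or low-order cases, so the prolongation step above is not yet a theorem for general $(n,k)$. Even granting such a parallelism, it is not clear a priori that every highly symmetric $k$-nondegenerate model is a blow-down of a spherical hypersurface; should there exist ``exotic'' models whose symmetry algebra embeds in no $\mathfrak{su}(p,q)$, then Theorem~\ref{subalgmaxdim} cannot be invoked and one would need an independent dimension count for them. Finally, promoting a local bound at a point of $U$ to the stated global conclusion requires the analyticity-and-connectedness patching already used in Section~\ref{S32}, together with control of the contribution of the Levi-degeneracy locus; these steps are technical but, I expect, not where the real difficulty lies.
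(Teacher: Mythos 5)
The statement you are addressing is Conjecture \ref{new-conjecture}; the paper does not prove it, and neither does your proposal. Your reduction is correct and coincides with the paper's own discussion in Section \ref{S51}: whenever the Levi-nondegeneracy locus is nonempty, Theorem \ref{main1} already gives everything (indeed something stronger), so the entire content of the conjecture lies in the everywhere Levi-degenerate, holomorphically nondegenerate case. For that case, however, your argument is a programme rather than a proof, as you yourself acknowledge: a canonical absolute parallelism for uniformly $k$-nondegenerate hypersurfaces is available only for $n=2$, $k=2$ (\cite{IZ}, used in \cite{IK2}) and in a handful of special situations for $n\ge 3$ (\cite{P,PZ,Sa}), and there is no theorem asserting that every highly symmetric finitely nondegenerate model arises as a blow-down of a hyperquadric, which is what you would need in order to invoke Theorem \ref{subalgmaxdim}. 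These are precisely the open problems the paper itself points to, so your proposal reproduces the paper's supporting evidence for the conjecture rather than supplying the missing step.

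One further inaccuracy: in the everywhere Levi-degenerate case the conjecture's ``only if'' clause obliges you to prove the \emph{strict} inequality $\dim\mathfrak{s}(M)<n^2+2n+2+\delta_{2,n}$, not merely ``$\le$'', since such an $M$ is nowhere spherical and hence cannot attain the submaximal value. This is consistent with the known case $n=2$, where \cite{IK2} bounds the everywhere-degenerate symmetry dimension by $10<11$. Your statement that the goal ``collapses to the single inequality $\dim\mathfrak{s}(M)\le n^2+2n+2+\delta_{2,n}$'' is therefore slightly too weak.
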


Let us support this claim. It holds for $1\leq n\leq 2$, and also for larger $n$, provided $M$ is 
Levi nondegenerate somewhere. For the case $n=2$ we utilized in \cite{IK2} the following fact:
a real-analytic connected holomorphically nondegenerate CR-hypersurface of dimension 5 with everywhere
degenerate Levi form is generically 2-nondegenerate. By appealing to the main result of \cite{IZ},
this allowed to estimate the dimension of the symmetry algebra in everywhere Levi-degenerate case by 10,
see also \cite{MS,MP}.

For $n\ge 3$ some partial results generalizing this have been obtained in the literature.
CR hypersurfaces that are 1-degenerate and 2-nondegenerate in the sense of Freeman
with a certain additional condition were investigated in
\cite{P} for $n=3$ and in \cite{PZ} for general $n$. The upper bound on symmetry achieved in those
references confirms our conjecture. Also in \cite{Sa} all Levi degenerate homogeneous 7-dimensional
CR hypersurfaces ($n=3$) were classified. Again, the results align with Conjecture \ref{new-conjecture}.

We expect that elaboration upon Cartan and Tanaka theories in the spirit of \cite{K1} can
provide effective bounds on local symmetry important for this claim.
Global topological behavior of $M$ results in passing from a local algebra to a subalgebra and,
by the results of Section \ref{S31}, this cannot change the submaximal dimension bound.

\subsection{On models with large symmetry}\label{S52}

Realizations of many symmetry algebras remain beyond the scope of this paper. For instance, we conjecture that
non-maximal parabolic subalgebras of the pseudo-unitary algebras can also be realized as symmetries of polynomial 
CR models. Realization of other maximal subalgebras, discussed in the proof of Theorem \ref{subalgmaxdim},
% namely semi-simple and reductive stabilizers of pseudotori,
is important too.

Large symmetry algebras can also be obtained via intersection of maximal subalgebras. For instance, blow-up of
the hyperquadric ${\mathcal Q}_{\bar p}$ at two different points reduces the symmetry algebra $\mathfrak{su}(p,q)$ 
to the intersection of two conjugated parabolics $\mathfrak{p}_{1,n+1}$ that vary in dimension depending on position of the points.

In \cite{IK2} we presented a series of examples of 5-dimensional CR manifolds with symmetry dimension 
$\leq11$. In particular, for $\dim\mathfrak{s}=9$ we borrowed the following example from \cite{KM}: 
$M^5=\{\Im(w)=|z_1|^2+|z_2|^4\}$. We computed that its symmetry algebra is 
$\mathfrak{s}(M^5)=\mathfrak{u}(1,2)$, a reductive maximal subalgebra in $\mathfrak{su}(2,2)$. 
Note that the symmetry algebra $\mathfrak{p}_{1,2,3}$, constructed in Example \ref{p123}, 
is also of dimension 9. 

The same problem is interesting for the automorphism group. For $n=2$
consider the lens space ${\mathcal L}_m=S^5/\Z_m$, $m>1$, where $\Z_m\subset U(1)$ acts on
the unit sphere $S^5\subset\C^3$ by complex multiplication. By \cite[p.~37]{I1} the Lie group
$\op{Hol}({\mathcal L}_m)$ is $U(3)/\Z_m$, again of dimension 9.
Note that ${\mathcal L}_m$ is everywhere spherical falling into part (ii) of Theorem~\ref{main3}.

Finally, provided the Levi nondegeneracy locus is nonempty for $n>2$, the models of 
symmetry dimension $D_{\hbox{\tiny\rm max}}$ are spherical. Classification of CR-hypersurfaces
with symmetry dimension $D_{\hbox{\tiny\rm smax}}$ is not fully solved even for $n=1$.
The real difficulties show in the construction of the models in \cite{IK2}. 
The approach taken in this paper suggests more tractable problems: 
Which weighted blow-ups and ramified coverings of the hyperquadric lead to the models with submaximal symmetry dimension?
Can these be classified? We hope these directions show fruitful in the future.

\footnotesize

\end{document}